\newcommand{\email}[1]{\href{mailto:#1}{#1}}
\newcommand{\N}{\mathbb{N}}
\newcommand{\R}{\mathbb{R}}
\newcommand{\C}{\mathbb{C}}
\newcommand{\K}{\mathbb{K}}
\renewcommand{\P}{\mathbb{P}}
\newcommand{\PP}{\mathbb{P}}
\newcommand{\ce}{\coloneqq}
\newcommand{\ec}{\eqqcolon}
\newcommand{\1}{\mathbf{1}}
\newcommand{\calA}{\mathcal{A}}
\newcommand{\calB}{\mathcal{B}}
\newcommand{\calC}{\mathcal{C}}
\newcommand{\calF}{\mathcal{F}}
\newcommand{\calL}{\mathcal{L}}
\newcommand{\calN}{\mathcal{N}}
\newcommand{\calO}{\mathcal{O}}
\newcommand{\calP}{\mathcal{P}}
\newcommand{\frakP}{\mathfrak{P}}
\newcommand{\calT}{\mathcal{T}}
\newcommand{\calX}{\mathcal{X}}
\newcommand{\bfA}{\mathbf{A}}
\newcommand{\bfe}{\mathbf{e}}
\newcommand{\bff}{\mathbf{f}}
\newcommand{\bfF}{\mathbf{F}}
\newcommand{\bfH}{\mathbf{H}}
\newcommand{\bfJ}{\mathbf{J}}
\newcommand{\bfP}{\mathbf{P}}
\newcommand{\bfR}{\mathbf{R}}
\newcommand{\bfs}{\mathbf{s}}
\newcommand{\bfT}{\mathbf{T}}
\newcommand{\bfV}{\mathbf{V}}
\newcommand{\bfu}{\mathbf{u}}
\newcommand{\bfv}{\mathbf{v}}
\newcommand{\bfw}{\mathbf{w}}
\newcommand{\frakA}{\mathfrak{A}}
\newcommand{\frakF}{\mathfrak{F}}
\newcommand{\frakH}{\mathfrak{H}}
\newcommand{\frakJ}{\mathfrak{J}}
\renewcommand{\frakP}{\mathfrak{P}}
\newcommand{\frakT}{\mathfrak{T}}
\newcommand{\frakV}{\mathfrak{V}}
\newcommand{\fraku}{\mathfrak{u}}
\newcommand{\frakv}{\mathfrak{v}}
\newcommand{\frakw}{\mathfrak{w}}
\newcommand{\frakform}{\mathfrak{\form}}
\newcommand{\rmd}{\mathrm{d}}
\newcommand{\hra}{\hookrightarrow}
\newcommand{\CVH}{C_{V \hra H}}
\newcommand{\grad}{\nabla}
\newcommand{\sumj}{\sum_{j=0}^{N-1}}
\DeclareMathOperator{\TextRe}{Re}
\DeclareMathOperator{\TextIm}{Im}
\renewcommand{\Re}{\TextRe}
\renewcommand{\Im}{\TextIm}
\newcommand{\seq}{\subseteq}
\newcommand{\ot}{\otimes}
\newcommand{\LZ}{{L_2(\R^N,\PP_Z)}}
\newcommand{\PZ}{\mathbb{P}_Z}
\newcommand{\dPZ}{~\mathrm{d}\mathbb{P}_Z}
\newcommand{\px}{{p_{\mathrm{x}}}}
\newcommand{\pt}{{p_{\mathrm{t}}}}
\newcommand{\pz}{{p_{\mathrm{z}}}}
\newcommand{\Nt}{N}
\newcommand{\Cx}{{C_{\mathrm{x}}}}
\newcommand{\Cz}{{C_{\mathrm{z}}}}
\newcommand{\Yx}{{Y_{\mathrm{x}}}}
\newcommand{\Yt}{{Y_{\mathrm{t}}}}
\newcommand{\Yz}{{Y_{\mathrm{z}}}}
\newcommand{\e}{\mathrm{e}}
\renewcommand{\i}{\mathrm{i}}
\newcommand{\Kzz}{{\K^{2\times2}}}
\newcommand{\Rzz}{{\R^{2\times2}}}
\newcommand{\ols}{{\overline{s}}}
\DeclareMathOperator{\dist}{dist}
\DeclareMathOperator{\tr}{tr}
\DeclareMathOperator{\dom}{D}
\DeclareMathOperator{\diverg}{div}
\renewcommand{\div}{\diverg}
\DeclareMathOperator{\gradient}{grad}
\renewcommand{\grad}{\gradient}
\DeclareMathOperator{\spt}{supp}
\DeclareMathOperator*{\esssup}{ess\,sup}
\DeclareMathOperator{\Div}{Div}
\DeclareMathOperator{\Hess}{Hess}
\def\@row#1,{#1\@ifnextchar;{\@gobble}{&\@row}}
\def\@matrix{%
    \expandafter\@row\my@arg,;%
    \@ifnextchar({\\ \get@in@paren{\@matrix}}{\after@matrix}%
    }
\def\matrixtype#1#2#3{%
    \ifmmode\def\after@matrix{\end{#2}\right#3}%
    \else\def\after@matrix{\end{#2}\right#3$}$\fi
    \left#1\begin{#2}\get@in@paren{\@matrix}%
    }
\def\@column#1,{#1\@ifnextchar;{\@gobble}{\\ \@column}}
\newcommand\vect{}
\def\svect(#1){\left(\begin{smallmatrix}\@column#1,;\end{smallmatrix}\right)}
\def\vect{\get@in@paren{\@vect}}
\def\@vect{\left(\begin{matrix}\expandafter\@column\my@arg,;\end{matrix}\right)}
\def\get@in@paren#1({\def\my@arg{}\def\my@rest{}\def\after@get{#1}\get@arg}
\let\e@a\expandafter
\def\get@arg#1){\e@a\kl@test\my@rest#1(;}
\def\kl@test#1(#2;{\e@a\def\e@a\my@arg\e@a{\my@arg#1}%
                   \ifx:#2:\let\my@exec\after@get
                   \else\let\my@exec\get@arg
                        \e@a\def\e@a\my@arg\e@a{\my@arg(}%
                        \def@rest#2;%
                   \fi\my@exec}
\def\def@rest#1(;{\def\my@rest{#1\kl@zu}}
\def\kl@zu{)}
\mathchardef\capitaly\mathcode`\Y
\newcommand\testfirsttoken{\ifx\myfirsttoken$\expandafter\getsecondtoken\fi}
\def\getsecondtoken${\futurelet\mysecondtoken\testsecondtoken}
\newcommand\testsecondtoken{\ifx\mysecondtoken.\mkern-2mu\fi
                            \ifx\mysecondtoken,\mkern-2mu$\else$\fi}
\newcommand\MyPairedDelimiter{%
  \@ifstar{\My@Paired@Delimiter{{}}}
          {\My@Paired@Delimiter{}}%
}
\newcommand\My@Paired@Delimiter[4]{%
  \newcommand#2{%
    \@ifstar{\start@PD{#1}{\delimitershortfall=-1sp}{#3}{#4}}
            {\start@PD{#1}{}{#3}{#4}}%
  }%
}
\newcommand\start@PD[5]{%
  #1\mathopen{\mathpalette\put@delim@helper{\put@delim{#2}{#3}{.}{#5}}}%
  #5%
  \mathclose{\mathpalette\put@delim@helper{\put@delim{#2}{.}{#4}{#5}}}%
}
\newcommand\put@delim@helper[2]{%
  \hbox{$\m@th\nulldelimiterspace=0pt #2#1$}%
}
\newcommand\put@delim[5]{%
  \setbox\z@\hbox{$\m@th#5{#4}$}%
  \setbox\tw@\null
  \ht\tw@\ht\z@ \dp\tw@\dp\z@
  #1#5%
  \left#2\box\tw@\right#3%
}
\MyPairedDelimiter*{\abs}{\lvert}{\rvert}
\MyPairedDelimiter*{\norm}{\lVert}{\rVert}
\MyPairedDelimiter{\set}{\{}{\}}
\providecommand{\form}{a}
\newcommand{\bfform}{\mathbf{\form}}
\providecommand{\scpr}[2]{\left( #1 \,\middle|\, #2 \right)}
\renewcommand{\sp}{\scpr}
\newcommand{\from}{\colon}
\newtheorem{Satz}{Satz}[section]
\newtheorem{definition}[Satz]{Definition} 
\newtheorem{theorem}[Satz]{Theorem}
\newtheorem{lemma}[Satz]{Lemma}	
\newtheorem{proposition}[Satz]{Proposition}
\newtheorem{corollary}[Satz]{Corollary}
\newtheorem{remark}[Satz]{Remark}
\newtheorem{example}[Satz]{Example}
\newtheorem{assumption}[Satz]{Assumption}
\newtheorem{notation}[Satz]{Notation}
\title{Approximation of Random Evolution Equations of Parabolic type}
\author{Katharina Klioba\footnote{Technische Universit\"at Hamburg, Institut f\"ur Mathematik, Am Schwarzenberg-Campus 3 (E), 21073 Hamburg, Germany, \email{katharina.klioba@tuhh.de}}, Christian Seifert\footnote{Technische Universit\"at Hamburg, Institut f\"ur Mathematik, Am Schwarzenberg-Campus 3 (E), 21073 Hamburg, Germany, \email{christian.seifert@tuhh.de}}}
\date{\today}
\begin{document}

\maketitle

\begin{abstract}
 In this paper, we present an abstract framework to obtain convergence rates for the approximation of random evolution equations corresponding to a random family of forms determined by finite-dimensional noise. The full discretization error in space, time, and randomness is considered, where polynomial chaos expansion (PCE) is used for the semi-discretization in randomness. The main result are regularity conditions on the random forms under which convergence of polynomial order in randomness is obtained depending on the smoothness of the coefficients and the Sobolev regularity of the initial value. In space and time, the same convergence rates as in the deterministic setting are achieved. To this end, we derive error estimates for vector-valued PCE as well as a quantified version of the Trotter--Kato theorem for form-induced semigroups. We apply the abstract framework to an anisotropic diffusion model with random diffusion coefficients.
 
\smallskip
 \noindent \textbf{Keywords:} Abstract Cauchy problem, approximation, polynomial chaos expansion, convergence rates, strongly continuous semigroups, parabolic PDEs, random coefficients

 \smallskip
 \noindent \textbf{MSC2020:} 47D06, 47N40, 65J08, 35K90, 41A25
\end{abstract}

\section{Introduction}

Evolution equations are a classical topic in partial differential equations (PDEs). The functional analytic treatment of a time-dependent PDE results in a description of it as an ordinary differential equation in a Banach or Hilbert space; that is, the PDE is formulated as an abstract Cauchy problem; see, e.g., the classical monographs on this topic \cite{EngelNagel2000,HillePhillips1957, Pazy1983}.

In applications, the coefficients of PDEs often originate from material laws involving material parameters that may be unknown or can only be determined up to some uncertainty. One possibility of modelling such situations with uncertain data consists of choosing suitable random variables (or stochastic processes, random fields) that describe the coefficients in the PDE. This approach results in a random time-dependent PDE, which typically cannot be solved analytically. Assuming well-posedness of the model thus obtained, in order to then approximate the solution, one has to perform suitable approximations with respect to the randomness, the spatial, and the temporal variables. The aim of this paper is to provide such an approximation, including convergence rates, in the framework of random evolution equations given by abstract Cauchy problems with random generators. We work mainly from a functional analytic point of view; we will work solely in the Hilbert space setting.

To be more precise, we focus on parabolic PDEs, where the generators $A_z$ in a spatial Hilbert space $H$ are parametrized by a random parameter $z$, and we study the abstract Cauchy problems
\[
    u_z'(t) = -A_z u_z(t) \quad(t>0), \quad u_z(0) = u_{0,z},
\]
with initial conditions $u_{0,z} \in H$. If we let $(T_z(t))_{t\geq 0}$ be the strongly continuous semigroup generated by $-A_z$, that is, $T_z(t):=\e^{-tA_z}$ for $t\geq 0$, we can thus write
\[u_z(t) = T_z(t) u_{0,z} = \e^{-tA_z} u_{0,z}.\]
Now, the approximation in randomness means approximating the initial condition and the generator $-A_z$ so that the dependence on $z$ is finite-dimensional and, in turn, propagating the approximation to the semigroup and then its application to the (approximated) initial value. This type of uncertainty quantification will be accompanied by a space-time approximation to obtain a fully discretized approximate problem and the corresponding approximate solution. We aim for convergence results for the solutions of the fully discretized approximate problems to the solution of the original problem in a natural norm, including convergence rates depending on suitable smoothness assumptions on the data. Since we work in the functional analytic setting, we essentially deal with approximating the semigroup, and in turn also the resolvents of the generators. In this way, we obtain an additional aspect that complements the usual point of view in numerical analysis, which is aimed at approximating the solutions directly.

\subsection{Random PDEs: a literature overview}
PDEs with parametric or random coefficients have been treated extensively in the past decades; see \cite{GhanemSpanos1991, LeMaitreKnio2010, Sullivan2015, Xiu2010} and references therein, as well as \cite{BabuskaNobileTempone2007, BabuskaTemponeZouraris2004, BachmayrCohenDungSchwab2017, ChkifaCohenSchwab2015, CohenDevoreSchwab2011, EigelGittelsonSchwabZander2015, FrauenfelderSchwabTodor2005, GottliebXiu2008, TodorSchwab2007, XiuHesthaven2005, XiuKarniadakis2002, XiuKarniadakis2002a, XiuKarniadakis2003,   XiuShen2009} (just to mention a few), with an emphasis on stochastic computation as well as approximation. From a numerical analysis point of view, a large portion of the references mentioned above are devoted to elliptic PDEs.
In this stationary elliptic case, we note that we can use the tensor product structure of Bochner--Lebesgue $L_2$-spaces, so that discretizations in randomness and space can be performed simultaneously; see, e.g., \cite{BabuskaTemponeZouraris2004}. Moreover, the case in which the elliptic coefficient is affine in the random inputs has been studied thoroughly; see, e.g., \cite{CohenDevoreSchwab2011, FrauenfelderSchwabTodor2005,  TodorSchwab2007}. In contrast, we consider parabolic, that is, time-dependent problems and allow for a fairly general dependence of the coefficients on the random inputs; in particular, they need not be analytic as in \cite{ChkifaCohenSchwab2015} (where parabolic problems are also treated).
Concerning parabolic problems, we mention \cite{BarthStein2022}, where random pathwise space-time discretizations for parabolic second-order PDEs with scalar diffusion coefficients have been studied, as well as \cite{ HoangSchwab2013, NobileTempone2009}. In the latter, the scalar diffusion coefficients are affine in the random variables with bounded support, or their distribution and semi-discretizations in space and randomness are considered. Moreover, in \cite{ZhangGunzburger2012}, stochastic collocation in combination with a full space-time discretization was considered. Furthermore, \cite{GyoengyMillet2009} provides convergence rates for space-time discretizations for a class of stochastic evolution equations with multiplicative noise.

\subsection{Main Results}
To illustrate our results, let us consider, as an example, the heat equation with random coefficients
\begin{alignat*}{4}
    \partial_t \widetilde{u}(t,x,\omega) & = \diverg_x \widetilde{M}_\omega(x) \grad_x \widetilde{u}(t,x,\omega) &&\quad(t>0, x\in G, \omega\in\Omega),\\
    \widetilde{u}(t,x,\omega) & = 0 &&\quad(t>0, x\in\partial G, \omega\in\Omega), \\
    \widetilde{u}(0,x,\omega) & = \widetilde{u}_0(x,\omega) &&\quad(x \in G,\omega\in\Omega),
\end{alignat*}
where $G\subseteq\R^d$ is a bounded domain, $(\Omega,\calF,\P)$ is a probability space,  for $\omega\in\Omega$, $\widetilde{M}_\omega$ maps to $\K^{d\times d}$ and is $\P$-almost surely uniformly elliptic and bounded, and $\widetilde{u}_0$ is the initial condition. Here, $\K\in\{\R,\C\}$ is the scalar field of all vector spaces appearing throughout and the divergence and gradient are taken w.r.t.\ the spatial variable $x$.
We will assume that the random fields $(x,\omega)\mapsto \widetilde{M}_\omega(x)$ and $(x,\omega)\mapsto u_0(x,\omega)$ are determined by \textit{finite-dimensional noise}; that is, there exists a random vector $Z\from \Omega\to\R^N$ such that $\widetilde{M}_\omega(x) = M_{Z(\omega)}(x)$ for all $x\in G$ and $\omega\in\Omega$ for some $M\from G \times \R^N\to \K^{d\times d}$. Such finite-dimensional noise can be obtained by, e.g., a truncated Karhunen--Lo\`{e}ve expansion \cite{KacSiegert1947,Karhunen1947, Loeve1948}.
Pushing the random heat equation forward w.r.t.\ $Z$, we obtain another random heat equation, with random parameter $z \in \R^N$ w.r.t.\ the probability measure $\P_Z$: 
\begin{alignat*}{4}
    \partial_t u(t,x,z) & = \diverg_x M_z(x) \grad_x u(t,x,z) &&\quad(t>0, x\in G, z\in\R^N),\\
    u(t,x,z) & = 0 &&\quad(t>0, x\in\partial G, z\in\R^N), \\
    u(0,x,z) & = u_0(x,z) &&\quad(x\in G,z\in\R^N).
\end{alignat*}
We now take the functional analytic point of view by setting $H\ce L_2(G)$ and $A_z\ce -\diverg_x M_z \grad_x$ with domain $D \ce H^2(G)$ in $H$ for $z\in\R^N$. Note that $A_z$ is the operator associated with some form $\form_z$ on some Hilbert space $V \hra H$ for $z\in\R^N$. In this specific case, $V \ce H_0^1(G)$ and $\form_z(u,v) \ce - \sp{M_z\grad_x u}{\grad_x v}$. This results in the family of random abstract Cauchy problems
\[
    u_z'(t) = -A_z u_z(t) \quad(t>0), \quad u_z(0) = u_{0,z}
\]
for $z\in\R^N$. Considering $(A_z)_{z\in \R^N}$ as an operator $\bfA$ on the Hilbert space $\bfH\ce L_2(\R^N,\PZ;H)$ results in the abstract Cauchy problem
\[\bfu'(t) = -\bfA \bfu(t) \quad(t>0),\quad \bfu(0) = \bfu_0.\]

We now perform three independent discretizations:
\begin{enumerate}[label=\arabic*.)]
\item discretization in randomness: here we choose the (generalized) Polynomial Chaos Expansion (PCE); cf. \cite{Wiener1938, XiuKarniadakis2002},
\item discretization in space: here we choose an abstract Galerkin method (a finite element method for the concrete example above); cf. \cite{Bubnov1913, Galerkin1915},
\item discretization in time: here we use an $A$-stable method; cf. \cite{BrennerThomee1979, Dahlquist1963}.
\end{enumerate}
Clearly, in the deterministic case, approximation results and convergence rates are classical and well-known; see, e.g., \cite{FujitaSaitoSuzuki2001, Thomee2006} and references therein. As it turns out, there is an intricate relationship between spatial and temporal discretization. We will exploit the corresponding relationship for the discretization in randomness and in space-time, and find a similar behaviour in this case.

Using PCE, we obtain a deterministic coupled system as an approximation, which again has the form of an abstract Cauchy problem, now in $\frakH_n\ce H^{d_n}$, where $d_n$ is the number of $H$-valued equations and is determined by the discretization parameter $n$ and the dimension $N$ of the noise. This system can then be approximated in space with discretization parameter $m$ and in time with discretization parameter $k$, resulting in a fully discretized approximation $\bfu_{n,m,k}$. What we will trace explicitly is the interplay between these two discretization steps (randomness vs.\ space-time).

Our main theorem, which is formulated in Theorem \ref{thm:jointrate_symmetric}, provides a joint convergence rate induced by the corresponding rates for the single discretizations.

\begin{theorem}
    Suppose that the assumptions of Theorem \ref{thm:jointrate_symmetric} hold. In particular, assume that for some $\ell \in \N_0$, the random family of forms $(\form_z)_{z \in \R^N}$ is $\PZ$-almost surely uniformly bounded and uniformly coercive and satisfies $[z \mapsto \form_z(u,v)] \in C^\ell(\R^N)$ for all $u,v\in V$. Further, suppose there exists a constant $C_\ell\ge 0$ such that
    \begin{equation}
    \label{eq:mainAssIntro}
        |\rho(z)^{\alpha/2}\partial_z^\alpha a_z(u,v)| \le C_\ell \|u\|_D \|v\|_H 
    \end{equation}
    for $\PZ$-almost every $z\in\R^N$ and for all $u \in D$, $v \in V$, and multi-indices $\alpha \in \N_0^N$ with $|\alpha| \le \ell$. Let $\bfJ_{n,m}$ be a suitable embedding for $n,m \in \N$ and let $s>0$. Then for sufficiently smooth (in randomness and space) initial values $\bfu_0$, we have 
    \begin{align*}
        \|\bfJ_{n,m}\bfu_{n,m,k}(t)-\bfu(t)\|_\bfH
        & \le C \bigl(n^{-\ell} + m^{-\px} + \tau^\pt\bigr) \norm{\bfu_0}_{H_\rho^{2\ell}(\R^N,\PZ;D^{\max\{s+1,\pt\}})}
    \end{align*}        
        for all $n,m,k\in\N$ and temporal grid points $t$.
\end{theorem}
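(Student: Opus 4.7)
The natural strategy is to split the error by triangle inequality into three independent discretization errors and treat each of them separately. Concretely, writing $\bfu_n$ for the PCE semidiscretization in $\bfH$, $\bfu_{n,m}$ for the additionally Galerkin-discretized semi-discretization in $\frakH_n$, and $\bfu_{n,m,k}$ for its full time-discretization, I would estimate
\begin{align*}
  \|\bfJ_{n,m}\bfu_{n,m,k}(t)-\bfu(t)\|_\bfH
  &\le \|\bfu(t)-\bfu_n(t)\|_\bfH \\
  &\quad + \|\bfu_n(t)-\bfJ_{n,m}\bfu_{n,m}(t)\|_\bfH \\
  &\quad + \|\bfJ_{n,m}(\bfu_{n,m}(t)-\bfu_{n,m,k}(t))\|_\bfH,
\end{align*}
and then bound each summand by $n^{-\ell}$, $m^{-\px}$, and $\tau^\pt$, respectively (times appropriate regularity norms of $\bfu_0$), relying throughout on the uniform coercivity/boundedness assumption which guarantees a uniform bound on the semigroups $(T_z(t))_{t\ge 0}$ and hence on $\bfT(t)$.

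The first (PCE) summand is the core novelty. The assumption \eqref{eq:mainAssIntro} encodes weighted $C^\ell$-regularity of the form coefficients $z\mapsto \form_z(u,v)$ with weights tuned to the measure $\PZ$. Combined with the vector-valued PCE error estimate alluded to in the introduction, this yields a rate $n^{-\ell}$ provided the quantity being expanded lies in the weighted Sobolev space $H_\rho^{2\ell}(\R^N,\PZ;\,\cdot\,)$. The subtle point is that we need PCE-smoothness not of $\bfu_0$ alone but of $\bfT(t)\bfu_0$ — the semigroup action must preserve enough weighted smoothness for the bound to close. Here the quantified Trotter–Kato theorem for form-induced semigroups (advertised as an auxiliary result) enters: differentiating $z\mapsto T_z(t)u_{0,z}$ produces commutators of $T_z(t)$ with $\partial_z^\alpha A_z$, and assumption \eqref{eq:mainAssIntro} transfers derivative bounds on $\form_z$ to the correct mapping property $D\to H$ on $\partial_z^\alpha A_z$. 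Chaining this across $|\alpha|\le\ell$ multi-indices is what produces the $H_\rho^{2\ell}$-regularity requirement on $\bfu_0$ in the final norm and why the exponent $2\ell$ (rather than $\ell$) appears.

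The second (spatial) summand reduces to a deterministic Galerkin estimate on each PCE coefficient in $\frakH_n$: since the coupled system is a Cauchy problem in $H^{d_n}$ with an operator block of the same structural class as $\bfA$, the standard Galerkin convergence rate $m^{-\px}$ with constant controlled by a $D^{s+1}$-norm of the initial data applies component-wise, and summing in PCE indices gives the bound with $D^{s+1}$ replaced by the $\bfH$-valued version; here the commutation with $\bfJ_{n,m}$ and its definition as an embedding into $\bfH$ are used. The third (temporal) summand is the standard $A$-stable time stepping estimate of order $\pt$, applied to the semidiscrete problem in $\frakH_{n,m}$, which costs a $D^{\pt}$-type norm on the initial data; since we need the estimate uniformly in $n,m$, one has to make sure that the constants depend only on the uniform coercivity/boundedness bounds, not on the discretization parameters — this is where the uniformity in $z$ built into the assumptions is essential.

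The main obstacle is the first step. Once one has the abstract PCE convergence result and the quantified Trotter–Kato theorem, one must verify that the derivative bound \eqref{eq:mainAssIntro}, phrased at the level of forms, really does imply the hypotheses of those two ingredients for the operator family $(A_z)$ and for its semigroup action on $\bfu_0$. This transfer from form-level to operator-level smoothness, and the bookkeeping of how spatial regularity (the $D^{\max\{s+1,\pt\}}$ factor) and randomness regularity (the $H_\rho^{2\ell}$ factor) combine into the single tensorial norm appearing in the final bound, is the most delicate part. The remaining space and time estimates are then obtained by transplanting the classical deterministic theory to the coupled system, with uniform-in-$z$ constants that are available precisely because of the almost-sure uniform coercivity/boundedness assumption.
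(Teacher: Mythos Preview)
Your triangle-inequality strategy is correct in outline and matches the paper's, but two execution details are genuine gaps.

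First, the spatial term cannot be handled ``component-wise in PCE indices'' as you propose. After PCE, the deterministic system is governed by $\frakA_n$, which genuinely couples the $d_n$ modes through the triple-product coefficients $\varepsilon_{\alpha,\beta,\gamma}$; it is not block-diagonal in the PCE basis. The paper instead applies the deterministic space-time result (Theorem~\ref{thm:jointrate_deterministic_symmetric}) to $\frakA_n$ as a single form-induced operator on $\frakH_n$, obtaining a bound in terms of $\|\fraku_{0n}\|_{\frakA_n^{\max\{s+1,\pt\}}}$. The step you are missing is then relating this graph norm, \emph{uniformly in $n$}, back to $\|\bfu_0\|_{L_2(\R^N,\PZ;D^{\max\{s+1,\pt\}})}$. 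This is Lemma~\ref{lem:DfrakAnalpha}: for integer powers one uses the identification $\|\frakA_n^m\fraku_{0n}\|_{\frakH_n}=\|(\bfR_n\bfA\bfR_n)^m\bfu_0\|_\bfH$ together with contractivity of $\bfR_n$, and for fractional powers one interpolates via the Heinz inequality---which is precisely where self-adjointness of $\frakA_n$ (hence symmetry of the forms) enters.

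Second, your separate treatment of the temporal term would produce a bound involving $\|\frakP_{n,m}\fraku_{0n}\|_{\frakA_{n,m}^{\pt}}$, and there is no reason this is bounded uniformly in $m$: the discrete operators $\frakA_{n,m}$ live on finite-dimensional spaces and their graph norms blow up as $m\to\infty$. The paper avoids this by bundling space and time into a \emph{single} estimate (Proposition~\ref{prop:convergence_rate_random_symmetricPartI}), so that only the $\frakA_n$-graph norm appears, never the $\frakA_{n,m}$-graph norm. Your three-way split is not wrong in principle, but making it rigorous requires an additional uniform-in-$m$ inverse-type estimate that you have not supplied. Your account of the randomness term is correct in spirit---it is a semigroup difference $\|\bfT_n(t)\bfR_n\bfu_0-\bfT(t)\bfu_0\|_\bfH$ handled by the Trotter--Kato argument of Theorem~\ref{thm:errorunuTrotterKato}, not a direct PCE truncation of $\bfT(t)\bfu_0$ (cf.\ Remark~\ref{rem:straightPCEfails}).
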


Here, $n$ is the PCE parameter, $m$ the spatial discretization parameter, $k$ the temporal discretization parameter, and $\tau$ the maximal step size for the time discretization. Moreover, $\ell\in\N$, $\px,\pt >0$ are the rates for the PCE, the spatial and the temporal methods, respectively, and $s>0$ is such that the spatial method converges with rate $\px$ on $D^s$, the common (in $z$) domain of the $A_z^s$'s. Further, $\rho$ is a suitable weight needed in the case of exponential distributions $\PZ$. It can be omitted for normal, uniform, or, more generally, Beta distributed $\PZ$. In particular, we do not require bounded supports of the distributions of the random variables. If the forms $\form_z$ are not $\PZ$-almost surely symmetric, additional spatial regularity is required, as detailed in \cite{thesisKatharina}.

\subsection{Some words on our method of proof}

In order to show the main theorem, we follow the discretization steps as indicated above, i.e.\ we first make use of PCE in order to obtain a deterministic coupled system; see also \cite{Xiu2010}. On the level of scientific computation, this would allow making use of existing code for deterministic systems in order to numerically solve these problems. However, as our objective is to obtain an analytic approximation result, we then perform the discretizations in space and time in order to obtain a fully discretized system.

One has to be careful to use PCE in a meaningful way to maintain the abstract Cauchy problem structure when deriving the deterministic system. In particular, the semi-discretization $\bfu_n(t)$ of $\bfu(t)$ in randomness that still satisfies an abstract Cauchy problem does not agree with the PCE approximation of $\bfu(t)$ in most cases. Therefore, the estimation of the semi-discretization error $\|\bfu(t)-\bfu_n(t)\|_\bfH$ is more than a simple application of a PCE error bound. Instead, in Theorem \ref{thm:errorunuTrotterKato}, we employ a modified Trotter--Kato argument. In order to prove this Trotter--Kato argument, we provide pointwise in $z$ estimates of $A_z$ and its resolvent as well as its semigroup in Lemma \ref{lem:PointwiseEstimatesCoeffCond}. Here we make use of one of our core assumptions, namely estimates on the derivative (in $z$) of the forms $\form_z$ as stated in \eqref{eq:mainAssIntro}. The pointwise estimates are then lifted to mapping properties of $\bfA$ and its resolvent as well as its semigroup on suitable Sobolev subspaces of $\bfH$ in Proposition \ref{prop:SobolevEstBfA} via a composition estimate. In analogy to the deterministic case, we find that spatial regularity is required for the estimation of the semi-discretization error in randomness.

To estimate the semi-discretization error in space-time, standard theory yields an error estimate involving the graph norm of the vector of PCE coefficients of the initial values $\bfu_0$. In order to pass from the norm of this coefficient vector back to the norm of $\bfu_0$ in a suitable subspace of $\bfH$, we follow an approach based on the Heinz inequality in Lemma \ref{lem:DfrakAnalpha}.

We note that we treat linear equations in this article mainly for simplicity. We comment on possible extensions to non-linear situations in Remark \ref{rem:nonlinear_extensions}.

\subsection{Outline of the paper}
Let us outline the rest of the paper. In Section \ref{sec:RandomEvolutionEquations}, we formally introduce the random evolution equations considered. In Section \ref{sec:approx}, we collect the facts needed for the three discretizations separately, including a novel quantified version of the Trotter--Kato theorem for form-induced semigroups in Theorem \ref{thm:result_evolution} and error estimates for vector-valued multivariate PCE in Subsection \ref{subsec:random}. We recall deterministic space-time discretization in Section \ref{sec:jointRate_deterministic}. The main part is Section \ref{sec:jointRate}, where we prove the convergence theorem as indicated above, including the rates, in Theorem \ref{thm:jointrate_symmetric}. We apply the result in Section \ref{sec:applications} to a parabolic equation with random anisotropic diffusion. In Appendix \ref{sec:appendix}, we collect estimates for some Sturm--Liouville operators required in Subsection \ref{subsec:random}.

\section{Random Evolution Equations}
\label{sec:RandomEvolutionEquations}

Let $H$ be a separable Hilbert space and $(\Omega,\calF,\P)$ a probability space.
For $\omega\in\Omega$ let $A_\omega$ be a closed and densely defined operator in $H$.
Let $\calN_\Omega\subseteq \Omega$ be a $\P$-null set such that for all $\omega \in\Omega\setminus\calN_\Omega$ the operator $-A_\omega$ generates a strongly continuous semigroup $(T_\omega(t))_{t\geq 0}$ and there exists $\sigma\in\R$ such that $(\sigma,\infty)$ is contained in the resolvent set of $-A_\omega$ for all $\omega\in\Omega\setminus \calN_\Omega$. 

Suppose that $(A_\omega)_{\omega\in\Omega}$ is measurable, i.e., $\Omega\ni \omega\mapsto \1_{\Omega\setminus \calN_\Omega}(\omega) (\lambda-A_\omega)^{-1} \in \calL(H)$, where $\calL(H)$ is the space of bounded linear operators on $H$, is weakly measurable for some, and hence all, $\lambda \in (-\infty,-\sigma)$. Note that by Pettis' measurability theorem \cite[Corollary 1.11]{Pettis1938} and separability of $H$ this is equivalent to strong measurability of $\Omega\ni \omega\mapsto \1_{\Omega\setminus \calN_\Omega}(\omega) (\lambda-A_\omega)^{-1}$ for all $\lambda \in (-\infty,-\sigma)$. Moreover, the exponential formula then yields that $\Omega \ni \omega \mapsto \1_{\Omega\setminus\calN_\Omega}(\omega) T_\omega(t)$ is strongly measurable for all $t\geq 0$.

We consider the random evolution equation
\[u_\omega'(t) = -A_\omega u_\omega(t) \quad(t>0), \quad u_\omega(0) = u_{0,\omega}\in H.\]
We will study this random family of evolution equations in the following way. 
Let $\bfA$ in $\bfH\ce L_2(\Omega,\P)\otimes H \cong L_2(\Omega,\P;H)$ be the operator defined by
\begin{align*}
    \dom(\bfA) & \ce \Big\{f\in \bfH;\; f(\omega)\in \dom(A_\omega) \text{ for $\P$-a.e.~$\omega\in\Omega$},\, \int_{\Omega} \norm{A_\omega f(\omega)}_H^2\,\rmd \P(\omega) < \infty\Big\},\\
    \bfA f & \ce \bigl[\Omega\ni \omega\mapsto A_\omega f(\omega) \in H\bigr]
\end{align*}

As a consequence of \cite[Proposition 2.3.11]{Thomaschewski2003}, we obtain the following proposition. 

\begin{proposition}\label{prop:multiplication_operator}
    Let $K\geq 1$, $\sigma \in\R$ such that $\norm{T_\omega(t)}\leq K\e^{\sigma t}$ for all $t\geq 0$ and $\omega\in \Omega\setminus\calN_\Omega$.
    Then $-\bfA$ is the generator of a $C_0$-semigroup $(\bfT(t))_{t\geq0}$ satisfying $\norm{\bfT(t)}\leq K\e^{\sigma t}$ for all $t\geq 0$. Furthermore, for $t\geq 0$ and $f\in \bfH$ we have $(\bfT(t)f)(\omega) = \1_{\Omega\setminus \calN_\Omega}(\omega)T_\omega(t)f(\omega)$ for $\P$-a.e.~$\omega\in\Omega$.
\end{proposition}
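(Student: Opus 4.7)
The plan is to construct the candidate semigroup $(\bfT(t))_{t\ge 0}$ directly by the pointwise formula $(\bfT(t)f)(\omega) \ce \1_{\Omega\setminus\calN_\Omega}(\omega)\,T_\omega(t)f(\omega)$ and then identify its negative generator with $\bfA$. The first step is to check that each $\bfT(t)$ is a bounded operator on $\bfH$: the strong measurability of $\omega\mapsto \1_{\Omega\setminus\calN_\Omega}(\omega)T_\omega(t)$, noted just before the proposition as a consequence of the exponential formula, combined with Pettis' theorem and separability of $H$, shows that $\omega\mapsto T_\omega(t)f(\omega)$ is strongly measurable for every $f\in\bfH$; the pointwise bound $\|T_\omega(t)f(\omega)\|_H \le K\e^{\sigma t}\|f(\omega)\|_H$ then integrates to $\|\bfT(t)\|_{\calL(\bfH)} \le K\e^{\sigma t}$. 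The semigroup law $\bfT(t+s) = \bfT(t)\bfT(s)$ is satisfied pointwise outside the null set $\calN_\Omega$, and strong continuity at $0$ follows from Lebesgue's dominated convergence applied to $\|T_\omega(t)f(\omega) - f(\omega)\|_H^2$, which vanishes pointwise as $t\downarrow 0$ and is dominated, uniformly for $t\in[0,1]$, by a constant multiple of $\|f(\omega)\|_H^2 \in L_1(\Omega,\P)$.

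Next I would identify the generator. Let $-\bfB$ denote the generator of $(\bfT(t))_{t\ge 0}$. For $\mu > \sigma$, the Laplace transform representation of the resolvent combined with Fubini's theorem (justified by the standing measurability of $\omega\mapsto(\lambda-A_\omega)^{-1}$ and the Hille--Yosida estimate $\|(\mu+A_\omega)^{-1}\|_{\calL(H)} \le K/(\mu-\sigma)$) yields, for every $f\in\bfH$,
\[
    \bigl((\mu+\bfB)^{-1}f\bigr)(\omega) = \int_0^\infty \e^{-\mu t} T_\omega(t)f(\omega)\,\rmd t = (\mu+A_\omega)^{-1}f(\omega)
\]
for $\P$-almost every $\omega$. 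Denote the right-hand side by $R(\mu)\in\calL(\bfH)$; it is bounded with $\|R(\mu)\|\le K/(\mu-\sigma)$.

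To conclude, I would show that $R(\mu) = (\mu+\bfA)^{-1}$ as well. Given $f\in\bfH$, pointwise $R(\mu)f(\omega)\in\dom(A_\omega)$ and $A_\omega R(\mu)f(\omega) = f(\omega) - \mu R(\mu)f(\omega)$, which lies in $\bfH$; hence $R(\mu)f\in\dom(\bfA)$ by the very definition of $\dom(\bfA)$, and $(\mu+\bfA)R(\mu)f = f$. Injectivity of $\mu+\bfA$ transfers pointwise from the injectivity of each $\mu+A_\omega$ on $\Omega\setminus\calN_\Omega$, so $R(\mu) = (\mu+\bfA)^{-1}$ and consequently $\bfA = \bfB$, with the explicit representation of $\bfT(t)$ being built into the construction. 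The main obstacle is purely the bookkeeping of strong measurability through Pettis' theorem, the Laplace transform, and the pointwise definition of $\dom(\bfA)$; once this is in hand the proof reduces to a standard Hille--Yosida identification, which is precisely why the authors cite it as a direct consequence of \cite[Proposition 2.3.11]{Thomaschewski2003}.
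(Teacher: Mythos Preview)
Your argument is correct and is exactly the standard route for such multiplication-operator results: construct $\bfT(t)$ pointwise, verify the $C_0$-semigroup axioms via dominated convergence, and identify the generator by matching resolvents through the Laplace transform. The paper itself does not give a proof at all but simply cites \cite[Proposition 2.3.11]{Thomaschewski2003}; your write-up is a faithful reconstruction of what that reference contains, as you yourself note at the end.
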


We then consider
\[\bfu'(t) = -\bfA \bfu(t) \quad(t>0), \quad \bfu(0) = \bfu_0\in \bfH.\]
In order to approximate the solution $\bfu$, we have to combine three types of approximation, namely \emph{spatial} approximation taking care of approximation w.r.t.\ the space $H$, \emph{temporal} approximation for the time $t$ (typically considered on bounded intervals), and \emph{randomness} approximation taking care of the randomness in $L_2(\Omega,\P)$.

\section{Approximation Methods}
\label{sec:approx}

In this section, we review the three types of approximation separately. We start with spatial approximation, then turn to temporal approximation, and conclude with randomness approximation.

\subsection{Spatial approximation}
\label{subsec:spatial}

Let $V,H$ be separable Hilbert spaces, $V\hookrightarrow H$ with $V$ dense in $H$ such that we obtain a Gelfand triple $V\hookrightarrow H \hookrightarrow V^*$ with the antidual $V^*$ of $V$. 

A \emph{form} $\form$ on $V$ is a sesquilinear mapping $\form\from V \times V \to \K$, and we write $\form(u)\ce\form(u,u)$ for the corresponding quadratic form. For a given form $\form$ we call $\form^*\from V\times V\to \K$, $\form^*(u,v)\ce \overline{\form(v,u)}$ the \emph{adjoint form}.
A form $\form$ is called \emph{sectorial} if there exists $c\geq 0$ such that $\abs{\Im \form(u)} \leq c\Re \form(u)$ for all $u\in V$ or, put differently, the \emph{numerical range} $\calN(\form) \ce \set{\form(u);\; u\in V ,\, \norm{u}_V=1}$ is contained in the sector $\Sigma_\theta\ce\set{z \in \C\setminus \set{0};\; \abs{\arg z} \leq \theta}\cup\set{0}$ with $\theta\ce\arctan c$.
We call $\form$ \emph{symmetric} if $\form^* = \form$ and \emph{bounded} if there exists $K\geq 0$ such that $\abs{\form(u,v)} \leq K \norm{u}_V \norm{v}_V$ for all $u,v \in V$. 
Moreover, $\form$ is called \emph{coercive} if there exists $\kappa>0$ such that $\Re \form(u) \geq \kappa \norm{u}_V^2$ for all $u \in V$. Note that bounded coercive forms are sectorial of angle less than $\frac{\pi}{2}$.

Let $\form$ be a bounded form. Then we set $\calA\from V \to V^*$, $\calA u \ce\form(u,\cdot)$ and define the operator $A$ in $H$ associated with $\form$ via $\dom(A)\ce\calA^{-1}(H)\subseteq H$ and $A \ce \calA|_{\dom(A)}$, i.e.,
\begin{equation*}
    \form(u,v) = (\calA u)(v)=\sp{Au}{v}_H,\quad (u \in \dom(A), v\in V).
\end{equation*}
Additionally, suppose that $\form$ is coercive.
Then by the Lax--Milgram lemma $\calA$ is an isomorphism, which yields that the stationary problem is well-posed, i.e.,
\begin{equation*}
    \forall\, F\in V^*\,\exists! \, u\in V\,\forall\,v \in V: \form(u,v) = F(v) \quad\text{and}\quad \norm{u}_V \leq \norm{\calA^{-1}}\norm{F}_{V^*}.
\end{equation*}
Moreover, $A$ is m-accretive and therefore $-A$ generates a contractive and (in case $\K=\C$) analytic $C_0$-semigroup $T\from [0,\infty)\to \calL(H)$, i.e., $u\from[0,\infty)\to H$ is a solution of
\[
    u'(t) = -Au(t) \quad(t>0),\quad u(0) = u_0\in H
\]
if and only if $u(t) = T(t)u_0$ ($t\geq 0$). Furthermore, $A$ admits fractional powers and $0$ lies in the resolvent set of $A$. 

\begin{remark}
\label{rem:graph_norm}
    Let $\form$ be a bounded and coercive form and $A$ the associated operator. Then, for $s>0$, zero also lies in the resolvent set of $A^s$ (see e.g. \cite[Proposition~3.1.1(e)]{Haase2006}). Thus, the graph norm $\|\cdot\|_{A^{s}}$ of $A^s$ defined as $\|\cdot\|_{A^{s}}\ce (\|A^s \cdot\|_H^2+\|\cdot\|_H^2)^{1/2}$ and the norm $\|A^s\cdot\|_H$ are equivalent on $\dom(A^s)$. Indeed, for $u\in \dom(A^s)$, we have
    \[
        \norm{A^s u}_H^2 \leq \norm{u}_{A^s}^2 = \norm{u}_H^2 + \norm{A^s u}_H^2 
        \leq \big(\big\|(A^s)^{-1}\big\|^2_{\calL(H)}+1\big)\norm{A^s u}_H^2.
    \]
\end{remark}

Let us now turn to approximation.

\begin{definition}
    An \emph{approximating sequence} of $V$ is a sequence $(V_m)_{m\in\N}$ of closed subspaces of $V$ such that $\dist_V(v,V_m)\to 0$ as $m \to \infty$ for all $v\in V$.
\end{definition}
For an approximating sequence $(V_m)_{m\in\N}$ of $V$ and $m\in\N$ let $H_m\subseteq H$ be the closure of $V_m$ in $H$. Thus, $V_m\hookrightarrow H_m$, and, trivially, $V_m$ is dense in $H_m$ for all $m\in\N$. Note that typically (e.g., in applications), approximating sequences consist of finite-dimensional subspaces, and then $H_m=V_m$ as vector spaces.

\begin{definition}
    A \emph{space discretization} of $A$ consists of an approximating sequence $(V_m)_{m\in\N}$ of $V$ and a sequence $(A_m)_{m\in\N}$ of operators $A_m$ in $H_m$ such that $-A_m$ generates a $C_0$-semigroup $(T_m(t))_{t\geq 0}$ on $H_m$ for $m\in\N$. If the approximating sequence used is clear from the context, we also refer to $(A_m)_{m \in \N}$ as a space discretization of $A$.
\end{definition}

Let $(V_m)_{m\in\N}$ be an approximating sequence of $V$, and let $P_m\from H\to H_m\subseteq H$ be the $H$-orthogonal projection of $H$ onto $H_m$ and $J_m\from H_m\to H$ the canonical embedding for $m\in\N$. Then $J_mP_m\to I$ strongly. Indeed, $(J_mP_m)_{m\in\N}$ is uniformly bounded and strong convergence for $f\in V$ holds due to $\|J_mP_mf-f\|_H = \|P_mf-f\|_H = \dist_H(f,H_m) \le \CVH \dist_V(f,V_m) \to 0$, where $\CVH$ denotes the embedding constant. Strong convergence on $H$ now follows from the uniform boundedness principle.

For $m\in\N$, we define the approximating forms $\form_m\ce\form|_{V_m\times V_m}$. Then $\form_m$ is trivially bounded and coercive for all $m\in\N$ with the same constants as $\form$. Let $\calA_m\from V_m\to V_m^*$, $\calA_m u\ce\form_m(u,\cdot)$, and let $A_m$ be the m-accretive operator in $H_m$ associated with $\form_m$ for $m\in\N$.
Further, let $T_m\from [0,\infty)\to \calL(H_m)$ be the contractive and (in case $\K=\C$) analytic $C_0$-semigroup generated by $-A_m$. Hence, for given initial data $u_{0,m}\in H_m$, $u_m\from[0,\infty)\to H_m$ is a solution of
\[
    u_m'(t) = -A_mu_m(t) \quad(t>0),\quad u_m(0) = u_{0,m}
\]
if and only if $u_m(t) = T_m(t)u_{0,m}$ for $t\geq 0$. Thus, we obtain a space discretization of $A$, which is called the \emph{(Bubnov-)Galerkin discretization}. Coercivity of the form ensures convergence of Galerkin approximations by virtue of the \emph{uniform Banach--Ne\v{c}as--Babu\v{s}ka condition (BNB)}; cf.\ e.g.\ \cite[Proposition~2.4]{arendtGalerkin}.

To quantify the convergence of the approximating semigroups $(T_{m}(t))_{t\geq0}$ to $(T(t))_{\geq0}$, we need the notion of \emph{spatial convergence rates} and two quantities relating the approximation speed of $(V_m)_{m \in \N}$ to the properties of the operators $\calA$ and $\calA^*$, respectively.
\begin{definition}
\label{def:convratespace}
    Let $\Yx \hra H$ and $\px> 0$. 
    \begin{enumerate}
    \item
    A space discretization $(A_m)_{m \in \N}$ of $A$ is said to be \emph{convergent of order $\px$} on $\Yx$ (w.r.t.~the norm of $H$) \emph{for the stationary problem} if there exists a constant $\Cx\geq 0$ such that
    \begin{align*}
        \norm{J_mA_m^{-1} P_mu_0-A^{-1}u_0}_{H}\le \Cx \frac{\norm{u_0}_{\Yx}}{m^{\px}} \quad (u_0 \in \Yx,\, m \in \N).
    \end{align*}
    \item
    A space discretization $(A_m)_{m \in \N}$ of $A$ is said to be \emph{convergent of order $\px$} on $\Yx$ (w.r.t.~the norm of $H$) \emph{for the evolution problem} if for all $\bar{T}\geq 0$ there exists a constant $\Cx=\Cx(\bar{T})\geq 0$ such that
    \begin{align*}
        \norm{J_mT_m(t)P_mu_0-T(t)u_0}_{H}\le \Cx \frac{\norm{u_0}_{\Yx}}{m^{\px}} \quad (t\in[0,\bar{T}],\, u_0 \in \Yx,\, m \in \N).
    \end{align*}
     \end{enumerate}
\end{definition}

\begin{definition}[based on (6.6) in \cite{arendtGalerkin}]
    Let $\calX \hra V^*$ be a Banach space and $(V_m)_{m \in \N}$ an approximating sequence of $V$. For $m \in \N$, define
    \begin{align*}
        \gamma_m(\calX) &\ce \sup_{f \in \calX,\,\|f\|_\calX=1} \dist_V(\calA^{-1}f, V_m)
    \end{align*}
    and $\gamma_m^*(\calX)$ analogously with $\calA$ replaced by $\calA^*$. We say that $(\gamma_m(\calX))_{m\in\N}$ and $(\gamma_m^*(\calX))_{m\in\N}$ \emph{decay with rate} $p_1,p_2 \geq 0$, respectively, if there exist constants $C_{\gamma,\calX}, C_{\gamma,\calX}^*\geq0$ such that for all $m \in \N$
    \begin{align*}
        \gamma_m(\calX) \leq \frac{C_{\gamma,\calX}}{m^{p_1}} \quad\text{and}\quad
            \gamma_m^*(\calX) &\le \frac{C^*_{\gamma,\calX}}{m^{p_2}}. 
    \end{align*}
\end{definition} 
As an immediate consequence of this definition,
\begin{equation*}
    \dist_V(u,V_m) \le \gamma_m(\calX) \|\calA u\|_\calX \quad (u \in \calA^{-1}\calX).
\end{equation*}
The sum of the respective decay rates yields a convergence order for the stationary problem for suitable $\calX$.

\begin{theorem}
\label{thm:result_stationary}
     Let $\form\from V\times V\to \C$ be a bounded coercive form and $(A_m)_{m \in \N}$ a space discretization of $A$. Let $s>0$, $p_1, p_2\geq 0$ such that $(\gamma_m(\dom(A^s)))_{m\in\N}$ decays with rate $p_1=p_1(s)$ and $(\gamma_m^*(H))_{m\in\N}$ decays with rate $p_2$. Then
     there exists a constant $\Cx=\Cx(s)\geq 0$ such that
    \begin{align*}
        \norm{J_mA_m^{-1} P_mu_0-A^{-1}u_0}_{H}\le \Cx \frac{\norm{u_0}_{A^{s}}}{m^{p_1+p_2}}
    \end{align*}
     for all $u_0 \in \dom(A^s)$ and $m \in \N$. 
     Moreover, if $p_1+p_2>0$, the space discretization $(A_m)_{m\in\N}$ converges of order $p_1+p_2$ on $\dom(A^{s})$ for the stationary problem.
\end{theorem}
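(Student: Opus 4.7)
My plan is the classical C\'ea plus Aubin--Nitsche duality argument, adapted to the form framework above. Setting $w \ce A^{-1} u_0 = \calA^{-1} u_0 \in V$ and $w_m \ce A_m^{-1} P_m u_0 \in V_m$, the chain of identities $\form_m(w_m, v_m) = \sp{P_m u_0}{v_m}_{H_m} = \sp{u_0}{v_m}_H = \form(w, v_m)$ for all $v_m \in V_m$ shows that the error $e \ce w - J_m w_m \in V$ satisfies the Galerkin orthogonality $\form(e, v_m) = 0$ for all $v_m \in V_m$. A standard C\'ea argument using boundedness and coercivity of $\form$ then gives $\norm{e}_V \le \tfrac{K}{\kappa} \dist_V(w, V_m)$, and the definition of $\gamma_m(\dom(A^s))$, together with Remark \ref{rem:graph_norm} identifying the natural norm of $\dom(A^s)$ with the graph norm $\norm{\cdot}_{A^s}$, yields
\begin{equation*}
    \norm{e}_V \le \tfrac{K}{\kappa}\, \gamma_m(\dom(A^s))\, \norm{u_0}_{A^s} \le \tfrac{K\, C_{\gamma,\dom(A^s)}}{\kappa}\, m^{-p_1}\, \norm{u_0}_{A^s}.
\end{equation*}

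To gain the additional $m^{-p_2}$ factor when passing from the $V$-norm to the $H$-norm I would apply the Aubin--Nitsche trick to the adjoint form. For $g \in H$, set $\phi \ce (\calA^*)^{-1} g \in V$, which exists since $\form^*$ inherits boundedness and coercivity from $\form$ and Lax--Milgram applies, so that $\form(v,\phi) = \sp{v}{g}_H$ for all $v \in V$. Galerkin orthogonality gives $\sp{e}{g}_H = \form(e, \phi) = \form(e, \phi - \phi_m)$ for every $\phi_m \in V_m$; combining boundedness of $\form$ with the definition of $\gamma_m^*(H)$ produces $\abs{\sp{e}{g}_H} \le K \norm{e}_V\, \gamma_m^*(H)\, \norm{g}_H$. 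Taking the supremum over $g \in H$ with $\norm{g}_H = 1$ and inserting the bound on $\norm{e}_V$ from the previous paragraph yields the claimed estimate with $\Cx \ce K^2\, C_{\gamma,\dom(A^s)}\, C^*_{\gamma,H}/\kappa$; the final assertion of the theorem is then immediate from Definition \ref{def:convratespace}(a) with $\Yx \ce \dom(A^s) \hra H$.

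I do not anticipate a serious obstacle: this is the textbook Hilbert-space C\'ea--Aubin--Nitsche pattern. The only points requiring care are the identification $\calA^{-1} u_0 = A^{-1} u_0$ (valid since $u_0 \in H$), the correct bookkeeping of $J_m$ and $P_m$ in the Galerkin orthogonality, and the observation that the auxiliary dual problem defining $\phi$ is governed by $\calA^*$ rather than $\calA$, which is exactly what the definition of $\gamma_m^*(H)$ accommodates.
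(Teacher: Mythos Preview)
Your proof is correct and follows precisely the C\'ea--Aubin--Nitsche argument underlying \cite[Theorem~6.3]{arendtGalerkin}, which is exactly what the paper invokes; your constant $\Cx = K^2\kappa^{-1}C_{\gamma,\dom(A^s)}C^*_{\gamma,H}$ also matches the one the paper records. The reference to Remark~\ref{rem:graph_norm} is harmless but not actually needed---the estimate $\dist_V(\calA^{-1}u_0,V_m)\le\gamma_m(\dom(A^s))\norm{u_0}_{A^s}$ follows directly from the definition of $\gamma_m$ once one agrees that $\dom(A^s)$ carries the graph norm.
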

Since $p_1$ depends on $s$, the order of convergence scales with the smoothness of the initial value encoded in the domain of the $s$-th fractional power of $A$. The proof is essentially contained in \cite[Theorem 6.3]{arendtGalerkin} and yields the possible choice of constant
$\Cx = \Cx(s) = K^2\kappa^{-1}C_{\gamma,\dom(A^s)}C^*_{\gamma,H}$, where $K$ and $\kappa$ denote the boundedness and coercivity constants of $\form$, respectively. Note that \cite{arendtGalerkin} requires finite-dimensional subspaces for approximating sequences, which is not needed for \cite[Theorem 6.3]{arendtGalerkin}.
We now turn to the evolution problem. 

\begin{theorem}
\label{thm:result_evolution}
     Let $\form\from V\times V\to \C$ be a bounded, coercive, and symmetric form and $(A_m)_{m \in \N}$ a space discretization of $A$. Let $s>0$, $p_1, p_2\geq 0$ such that $(\gamma_m(\dom(A^s)))_{m\in\N}$ decays with rate $p_1=p_1(s)$ and $(\gamma_m^*(H))_{m\in\N}$ decays with rate $p_2$. Then for all $\bar{T}\geq 0$ there exists $\Cx=\Cx(\bar{T},s)$ such that
    \[
        \norm{J_mT_m(t)P_mu_0-T(t)u_0}_H \leq \Cx \frac{\norm{u_0}_{A^{s+1}}}{m^{p_1+p_2}}
    \]
    for all $t\in[0,\bar{T}]$, $u_0\in \dom(A^{s+1})$, and $m\in\N$. Moreover, if $p_1+p_2>0$, the space discretization $(A_m)_{m\in\N}$ converges of order $p_1+p_2$ on $\dom(A^{s+1})$ for the evolution problem.
\end{theorem}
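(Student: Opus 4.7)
The approach I would follow is the standard Ritz-projection decomposition familiar from parabolic Galerkin analysis, adapted to this abstract form setting. Introduce the elliptic (Ritz) projection $R_m \from V \to V_m$ by $\form(R_m u - u, v_m) = 0$ for all $v_m \in V_m$. A direct calculation using the definition of $A_m$ shows that $R_m u = A_m^{-1} P_m A u$ for $u \in \dom(A)$, so that the Ritz error $u - J_m R_m u$ is exactly the stationary Galerkin error with right-hand side $Au$. Theorem \ref{thm:result_stationary} and Remark \ref{rem:graph_norm} then give
\begin{equation*}
    \norm{u - J_m R_m u}_H \leq C\, \norm{u}_{A^{s+1}}\, m^{-(p_1+p_2)} \qquad (u \in \dom(A^{s+1})).
\end{equation*}

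Next, I would split the total error as
\begin{equation*}
    J_m T_m(t) P_m u_0 - T(t) u_0 = J_m \theta(t) - \rho(t),
\end{equation*}
where $\rho(t) \ce T(t) u_0 - J_m R_m T(t) u_0$ and $\theta(t) \ce T_m(t) P_m u_0 - R_m T(t) u_0 \in H_m$. For $\rho$, symmetry of $\form$ makes $A$ self-adjoint, so $T(t)$ commutes with $A^{s+1}$ and is contractive; applying the Ritz bound at $u = T(t) u_0$ yields $\norm{\rho(t)}_H \leq C \norm{u_0}_{A^{s+1}} m^{-(p_1+p_2)}$, uniformly in $t \in [0, \bar T]$.

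For $\theta$, using $A_m R_m = P_m A$ on $\dom(A)$, one verifies that $\theta$ solves the Cauchy problem in $H_m$
\begin{equation*}
    \theta'(t) + A_m \theta(t) = (R_m - P_m)\, A u(t), \qquad \theta(0) = (P_m - R_m) u_0.
\end{equation*}
The initial contribution $\norm{(P_m - R_m) u_0}_H$ is at most $2 \norm{u_0 - R_m u_0}_H$ (since $P_m$ is the $H$-optimal projection on $H_m \supseteq V_m$), hence is controlled by the stationary bound. The remaining Duhamel integral $\int_0^t T_m(t-s)(R_m - P_m) Au(s)\,\mathrm{d}s$ is the delicate part: a crude bound with $\norm{T_m(t-s)}_{\calL(H_m)} \leq 1$ and $\norm{(R_m-P_m) Au(s)}_H \lesssim \norm{u(s)}_{A^{s+2}}/m^{p_1+p_2}$ would cost one extra power of $A$ in the regularity of $u_0$. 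To recover the claimed rate I would exploit the self-adjointness of $A_m$ (inherited from symmetry of $\form$) via a duality argument: testing against $\varphi \in H_m$, pushing $A_m$ from $T_m(t-s)$ onto $\varphi$, and invoking the adjoint Ritz orthogonality to trade the lost power of $A$ against the decay supplied by $\gamma_m^*(H)$. Combining the two bounds and taking the maximum over $t \in [0, \bar T]$ yields the stated estimate.

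The main obstacle is precisely the Duhamel-integral bound in the $\theta$-estimate: the forcing $(R_m - P_m) A u(t)$ only decays at the stationary rate when tested against something in $\dom(A)$, so the proof must carefully distribute the factor of $A$ between the forcing and the test function, relying crucially on symmetry so that the Ritz projection is $H$-optimal in the Aubin--Nitsche sense and no additional regularity on $u_0$ beyond $\dom(A^{s+1})$ is needed.
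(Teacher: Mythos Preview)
Your approach is genuinely different from the paper's. The paper's proof is essentially two lines: it recognizes the stationary estimate of Theorem~\ref{thm:result_stationary} as a bound on the resolvent difference $\Delta_m \ce J_m A_m^{-1} P_m - A^{-1}$ in $\calL(\dom(A^s),H)$, and then invokes the quantified Trotter--Kato theorem \cite[Proposition~2.3a]{kappel1} of Ito--Kappel as a black box, which in the self-adjoint case converts resolvent convergence on $\dom(A^s)$ directly into semigroup convergence on $\dom(A^{s+1})$. Your Ritz-decomposition route is the classical Thom\'ee approach and can in principle be made to work, but it amounts to reproving the relevant part of Ito--Kappel by hand.

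The gap in your sketch is the Duhamel step. The ``duality argument'' you describe does not save the extra power of $A$: an Aubin--Nitsche bound on $\norm{(R_m-P_m)w}_H$ already \emph{uses} both $\gamma_m(\dom(A^s))$ and $\gamma_m^*(H)$, and still requires $w \in \dom(A^{s+1})$ for the full rate $m^{-(p_1+p_2)}$; moving $T_m$ onto the test function by self-adjointness does not trade regularity between the two factors. What actually closes the gap is analytic smoothing combined with integration by parts in time, not duality: write the forcing as $-\partial_\tau[(R_m-P_m)u(\tau)]$ and split $\int_0^t = \int_0^{t/2} + \int_{t/2}^t$; on $[0,t/2]$ integrate by parts so that the singularity lands in $\norm{A_m T_m(t-\tau)}_{\calL(H_m)} \leq C(t-\tau)^{-1} \leq 2C/t$ while the Ritz bound is applied to $u(\tau)$ itself (only $\dom(A^{s+1})$ needed); on $[t/2,t]$ keep the forcing as is but invoke smoothing of $T$ via $\norm{Au(\tau)}_{A^{s+1}} = \norm{A T(\tau) A^{s+1} u_0}_H \leq C\tau^{-1}\norm{u_0}_{A^{s+1}}$. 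Both halves then yield $C\, m^{-(p_1+p_2)}\norm{u_0}_{A^{s+1}}$, uniformly on $[0,\bar T]$. This is essentially the mechanism behind \cite[Proposition~2.3a]{kappel1}, and it is not the one your sketch describes.
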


\begin{proof}
    (i) 
    Let $K,\kappa>0$ be the boundedness and coercivity constants of $\form$ and let $\Delta_m$ 
    be defined by the resolvent difference in $H$ (restricted to $\dom(A^s)$), i.e.,
    \begin{align*}
        \dom(\Delta_m) \ce \dom(A^s),\quad
        \Delta_m f \ce J_mA_m^{-1}P_mf-A^{-1}f \quad(f\in\dom(\Delta_m)).
    \end{align*}
    By Theorem \ref{thm:result_stationary}, we have
    \begin{align*}
        \norm{\Delta_m f}_H = \norm{(J_mA_m^{-1}P_m-A^{-1})f}_{H} &
        \leq \frac{K^2}{\kappa}C_{\gamma,\dom(A^s)}C^*_{\gamma,H} \frac{\norm{f}_{A^s}}{m^{p_1+p_2}} 
    \end{align*}
    for all $f\in \dom(A^s)$.
    
    (ii)
    Let $\bar{T}\geq 0$. Note that symmetry of $\form$ implies symmetry of $\form_m$ and thus self-adjointness of $A_m$ for all $m\in\N$.
    Then by \cite[Proposition~2.3a]{kappel1}, there exists $C=C(\bar{T},s)\geq 0$ such that
    \begin{align*}
    \label{eq:quantConvRate}
        \norm{J_mT_m(t)P_mu_0-T(t)u_0}_H &\leq C \norm{\Delta_m}_{\calL(\dom(A^s),H)}\norm{u_0}_{A^{s+1}}
        \le \frac{ K^2}{\kappa}C C_{\gamma,\dom(A^s)}C_{\gamma,H}^* \frac{\norm{u_0}_{A^{s+1}}}{m^{p_1+p_2}} 
    \end{align*}
    for all $t\in[0,\bar{T}]$, $u_0\in \dom(A^{s+1})$, and $m\in\N$, where we used (i) to estimate the norm of $\Delta_m$.
\end{proof}

\begin{remark}
\label{rem:stationary-evolution}
    Theorem \ref{thm:result_evolution} actually states that convergence of the spatial discretization for the stationary problem can be turned into convergence of the spatial discretization for the evolution problem by slightly increasing the regularity required for the initial value. A version of Theorem \ref{thm:result_evolution} is available for non-symmetric forms provided that $u_0 \in \dom(A^{s+1+\varepsilon})$ for some $\varepsilon >0$ based on \cite[Proposition~2.2b]{kappel1} (see \cite[Theorem~3.8]{thesisKatharina}). If we consider time intervals $[(\bar{T})^{-1},\bar{T}]$ and symmetric forms, convergence of order $p_1+p_2$ is already obtained on $D(A^{s+1/2})$, cf. \cite[Corollary~3.9b]{thesisKatharina}. 
\end{remark}

\subsection{Temporal approximation}
\label{subsec:temporal}

Let $X$ be a Banach space and $(T(t))_{t\geq 0}$ a $C_0$-semigroup on $X$ with generator $-A$. Recall that a subspace $D\seq X$ is called a \textit{core} for $A$ if $\overline{A|_D} = A$.

\begin{definition}
\label{def:timediscretization}
    A \emph{time discretization method} for $(T(t))_{t\geq 0}$ is a function 
    $F\from [0,\infty)\to \calL(X)$ satisfying 
    $F(0) = I$. Let $F\from [0,\infty)\to \calL(X)$ be a time discretization method for $(T(t))_{t\geq 0}$. 
    Then $F$ is called \emph{stable} if there exist $K\geq 1$ and $\lambda\in\R$ such that for all $\Nt\in \N$ and $(\tau^i)_{1\leq i\leq \Nt}$ in $[0,\infty)$, we have
    \[
        \bigg\|\prod_{i=1}^\Nt F(\tau^i)\bigg\|\leq K\e^{\lambda \sum_{i=1}^\Nt \tau^i}.
    \]
    Let $\Yt\hra X$ with $\Yt\subseteq \dom(A)$ be a core for $-A$, and $\pt>0$. Then $F$ is called \emph{consistent of order $\pt$} on $\Yt$ if there exist $C\geq 0$ and $\tau_0>0$ such that for all $\tau\in[0,\tau_0]$, we have
    \[
        \norm{F(\tau)f-T(\tau)f} \leq C \tau^{\pt+1} \norm{f}_{\Yt} \quad(f\in \Yt).
    \]
    Moreover, $F$ is called \emph{convergent of order $\pt$} on $\Yt$ if for all $\bar{T}\geq 0$ there exist $C_{\bar{T}}\geq 0$ and $\tau_0=\tau_0(\bar{T})\in (0,\bar{T}]$ such that for all $t\in [0,\bar{T}]$, $\Nt\in\N$, and $(\tau^i)_{1\leq i\leq \Nt}$ with $\tau^i=\tau^i(t,\Nt)\in [0,\tau_0]$ for $1 \le i \le \Nt$ and  $\sum_{i=1}^\Nt \tau^i = t$, we have
    \[
        \bigg\|\prod_{i=1}^\Nt F(\tau^i)f - T(t)f\bigg\| \leq C_{\bar{T}} \Big(\max_{i=1,\ldots,\Nt} \tau^i\Big)^\pt \norm{f}_{\Yt} \quad (f\in \Yt).
    \]
\end{definition}

\begin{remark}
    Let $\Yt\hra X$ with $\Yt$ dense in $X$ such that $\Yt\subseteq \dom(A)$ and $\Yt$ is invariant under $(T(t))_{t\geq 0}$. Then $\Yt$ is a core for $-A$; cf.\ \cite[Proposition II.1.7]{EngelNagel2000}. Note that every core for $-A$ is dense in $X$.
\end{remark}

\begin{example}\label{ex:time_discretization_methods}
    We recall classical time discretization methods.
    \begin{enumerate}[label=(\alph*)]
     \item 
     Clearly, the trivial choice $F=T$ yields a stable and consistent and therefore also convergent time discretization method of arbitrary order on $X$. 
     \item \label{item:IEintro}
     Let $\lambda\geq 0$ such that $[\lambda,\infty)$ is contained in the resolvent set of $-A$, and define $F\from [0,\infty)\to \calL(X)$ by 
     \[F(\tau)\ce\begin{cases}
        I, & \tau=0,\\
        (I+\tau A)^{-1} = \frac{1}{\tau}(\frac{1}{\tau}+A)^{-1}, & \tau\in (0,\frac{1}{\lambda}),\\
        (I+\frac{1}{\lambda} A)^{-1}, & \tau\geq \frac{1}{\lambda}.
        \end{cases}\]
    Then $F$ is called the \emph{implicit Euler method} and is stable and consistent of order $1$ on $\dom(A^2)$. More generally, it converges of order $\pt \in (0,1]$ on $\dom(A^{2\pt})$ if $(T(t))_{t \ge 0}$ is bounded, see \cite[Theorem~1.3]{GomilkoTomilov14}. If, additionally, $T$ is analytic, the convergence order $\pt \in (0,1]$ is attained already on $\dom(A^\pt)$. In particular, if $-A$ generates a contractive $C_0$-semigroup, then $\lambda=0$ and $\norm{F(\tau)}\leq 1$ for all $\tau\geq 0$.
    \item \label{item:CNintro}
    Let $\lambda\geq 0$ such that $[\lambda,\infty)$ is contained in the resolvent set of $-A$, and define $F\from [0,\infty)\to \calL(X)$ by 
     \[F(\tau)\ce\begin{cases}
        I, & \tau=0,\\
        (I-\frac{\tau}{2}A)(I+\frac{\tau}{2} A)^{-1}, & \tau\in (0,\frac{2}{\lambda}),\\
        (I-\frac{1}{\lambda}A)(I+\frac{1}{\lambda} A)^{-1}, & \tau\geq \frac{2}{\lambda}.
        \end{cases}\]
    Then $F$ is called the \emph{Crank--Nicolson scheme} and is consistent, but may not always be stable, as the left shift semigroup on $C_0(\R)$ illustrates. Provided that $F$ is stable, it converges of order $\pt \in (0,2]$ on $D(A^{3\pt/2})$ for bounded semigroups \cite[Corollary~4.4]{Kovacs07}.
    \end{enumerate}
\end{example}

Implicit Euler and Crank--Nicolson are instances of a larger, highly relevant class of methods, so-called \emph{rational schemes}. 

\begin{definition}
\label{def:rational_time_discretization}
    Let $r\from \C \to \C\cup\{\infty\}$ be a rational function. We say that a time discretization method $F$ for $(T(t))_{t\ge 0}$ is \emph{induced by $r$} if $F(\tau)=r(\tau\cdot(-A))$ for all sufficiently small $\tau >0$. Such schemes $F$ are called \emph{rational schemes}.
    
    If $\abs{r(z)}\leq 1$ for all $z\in\C$ with $\Re z \leq 0$ and $r(z)-\e^z = o(z)$ as $z\to 0$ then $r$ is called \emph{A-acceptable}. It is called \emph{A-stable} if $\abs{r(z)}<1$ for all $z\in \C$ with $\Re z <0$. 
\end{definition}

In particular, considering $r(z)=(1-z)^{-1}$ and $r(z)=(1+\frac{z}{2})(1-\frac{z}{2})^{-1}$ we recover the implicit Euler and the Crank--Nicolson method from Example \ref{ex:time_discretization_methods}, respectively.
The notions of A-acceptability and A-stability originate from \cite{BrennerThomee1979,Dahlquist1963}. For schemes induced by A-acceptable $r$ such that $r(z)-\e^z = \calO(\abs{z}^{p+1})$ as $z\to 0$ for some $p>0$, it was shown in \cite[Theorem~3]{BrennerThomee1979} that $F$ is a convergent time discretization method of order $p$ on $\dom(A^{p+1})$. If $X$ is a Hilbert space and $A$ is self-adjoint, then $F$ even converges of order $p$ on $\dom(A^p)$; see, e.g., \cite[Theorem~7.1]{Thomee2006}.

The well-known Chernoff product formula (see \cite[Proposition 2.1]{Smolyanow2007}) states that a stable and consistent time discretization method is convergent, and the convergence is uniform on compact time intervals. In the spirit of the Chernoff product formula, stability and consistency of order $\pt$ imply convergence of order $\pt$ on an invariant subspace, provided that the semigroup is exponentially bounded on this space. A proof of this folklore statement can be found in \cite[Proposition 4.12]{isem15}, see also related results in \cite[Theorems 2 and 3]{ManginoRasa2007}. 

\begin{proposition}[quantified Chernoff product formula]
\label{prop:time_convergence}
    Let $\Yt\hra X$ with $\Yt$ dense in $X$ such that $\Yt\subseteq \dom(A)$ and $\Yt$ is invariant under $(T(t))_{t\geq 0}$, and assume that $T$ is exponentially bounded on $\Yt$.
    Let $F\from [0,\infty)\to \calL(X)$ be a stable time discretization method that is consistent of order $\pt>0$ on $\Yt$. Then $F$ is convergent of order $\pt$ on $\Yt$.
\end{proposition}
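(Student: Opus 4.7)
The plan is to run a standard telescoping decomposition of Chernoff type. Setting $\Pi_N \ce F(\tau^N)\cdots F(\tau^1)$ and using the semigroup property to write $T(t) = T(\tau^N)\cdots T(\tau^1)$, I would decompose
\begin{equation*}
    \Pi_N f - T(t)f = \sum_{j=1}^N F(\tau^N)\cdots F(\tau^{j+1}) \bigl(F(\tau^j) - T(\tau^j)\bigr)\, T\Big(\sum_{i=1}^{j-1}\tau^i\Big) f,
\end{equation*}
with the convention that empty products equal the identity.

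To bound each summand, I would combine the three available ingredients. Stability controls the forward product $F(\tau^N)\cdots F(\tau^{j+1})$ by $K e^{\lambda \bar{T}}$, uniformly in $j$ since $\sum_{i=j+1}^N \tau^i \le t \le \bar{T}$. Invariance of $\Yt$ under $(T(t))_{t \ge 0}$ together with exponential boundedness of $T$ on $\Yt$ ensures that $T(\sum_{i=1}^{j-1}\tau^i) f \in \Yt$ with $\Yt$-norm at most $M e^{\omega \bar{T}}\|f\|_\Yt$. Consistency of order $\pt$ then estimates the middle single-step defect by $C (\tau^j)^{\pt+1}$ times the $\Yt$-norm of its argument, provided $\tau^j \le \tau_0$, where $\tau_0 > 0$ is the threshold from consistency (truncated at $\bar T$ if necessary to keep the claim non-vacuous).

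Multiplying the three bounds and summing yields
\begin{equation*}
    \|\Pi_N f - T(t) f\| \le K M C e^{(\lambda+\omega)\bar{T}} \|f\|_\Yt \sum_{j=1}^N (\tau^j)^{\pt+1} \le K M C \bar{T}\, e^{(\lambda+\omega)\bar{T}} \Big(\max_{1 \le j \le N} \tau^j\Big)^{\pt}\|f\|_\Yt,
\end{equation*}
after factoring one power of $\max_j \tau^j$ out of $(\tau^j)^{\pt+1}$ and using $\sum_j \tau^j = t \le \bar{T}$. This delivers convergence of order $\pt$ with constant $C_{\bar{T}} = KMC\bar{T} e^{(\lambda+\omega)\bar{T}}$, independent of $N$ and of the chosen partition.

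The only point that requires genuine hypotheses, rather than algebraic bookkeeping, is that the argument fed into the consistency estimate must be in $\Yt$ with a controllable $\Yt$-norm. This is exactly what the invariance of $\Yt$ under the semigroup, combined with exponential boundedness in the $\Yt$-norm, provides; without invariance one would be forced into a density/core approximation argument with quantitative loss, but the present hypotheses sidestep this cleanly.
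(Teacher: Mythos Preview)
Your telescoping argument is correct and is the standard proof of this folklore result; the paper does not reproduce a proof but cites \cite[Proposition~4.12]{isem15}, which proceeds in essentially the same way. (The bounds $Ke^{\lambda\bar T}$ and $Me^{\omega\bar T}$ tacitly assume $\lambda,\omega\ge 0$, which is harmless since one may always replace them by their nonnegative parts.)
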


So far, the time steps $\tau^i$ may depend on the particular value of $t\in [0,\bar{T}]$. In applications, one typically works with one family of $N_k\in \N$ time steps $\tau_k=(\tau_k^i)_{1\le i \le N_k}$ and only considers times generated by the grid of these time steps. To obtain a finer discretization, $N_k$ is increased, which corresponds to larger parameters $k \in \N$.
That is, for $\bar{T}\geq 0$ and $k,N_k\in\N$, a family of \emph{time steps} is a family $\tau_k=(\tau_k^i)_{1\leq i\leq N_k} \in [0,\bar{T}]^{N_k}$ such that $\tau_k^i \to 0$ for all $1\leq i\leq N_k$ and $N_k \to \infty$ as $k \to \infty$ as well as $\sum_{i=1}^{N_k} \tau_k^i = \bar{T}$. Then $\calT_k\ce \{\sum_{i=1}^j \tau_k^i;\; j\in\{0,\ldots, N_k\}\}$ is called the \emph{grid} associated with $\tau_k$. Moreover, for a time discretization method $F\from [0,\infty)\to \calL(X)$ and $t\in\calT_k$, we write $F_k(t)\ce\prod_{i=1}^{N_{t,k}} F(\tau_k^i)$, where $N_{t,k}\ce \max\{j\in\{0,\ldots,N_k\}:\; \sum_{i=1}^j \tau_k^i = t\}$. If $F$ converges of order $\pt>0$ on $\Yt$ then for all $k \in \N$, there exist $C_{\bar{T}}\geq 0$ and $\tau_0=\tau_0(\bar{T})>0$ such that if $\max_{i=1,\ldots,N_k} \tau_k^i\leq \tau_0$ then
    \[
        \max_{t\in \calT_k} \norm{F_k(t)f - T(t)f} \leq C_{\bar{T}} \Big(\max_{i=1,\ldots,N_k} \tau_k^i\Big)^\pt \norm{f}_{\Yt} \quad (f\in \Yt).
    \]

\subsection{Randomness approximation}
\label{subsec:random}

Let $(\Omega,\calF,\P)$ be a probability space, $N \in \N$, and $Z = (Z_0,\ldots,Z_{N-1})\from \Omega\to \R^N$ a random variable.
For $n\in\N_0$, let
\[
    \calP_n^N\ce \Bigg\{p\from \R^N\to\K; \; \exists (c_\alpha)_{|\alpha|\le n} \subseteq \K:\,p(x) = \sum_{\alpha\in\N_0^N, \abs{\alpha}\leq n} c_\alpha x^\alpha\quad(x\in\R^N)\Bigg\}
\]
be the vector space of polynomials in $N$ variables of order at most $n$. Note that $\dim \calP_n^N = {\binom{n+N}{n}}$. Moreover, let $\calP^N\ce \bigcup_{n\in\N_0}\calP_n^N$
be the vector space of polynomials. We will assume that $\calP^N\subseteq L_2(\R^N,\P_Z)$ is dense, i.e., $\P_Z$ has moments of all orders. Then $(\calP_n^N)_{n\in\N_0}$ is an approximating sequence of $L_2(\R^N,\P_Z)$.  

\begin{definition}
    For $n\in\N_0$, let $R_n\from L_2(\R^N,\P_Z)\to \calP_n^N\subseteq L_2(\R^N,\P_Z)$ be the orthogonal projection. Then $R_n f$ is called the \emph{polynomial chaos approximation} or \emph{PC approximation} of $f\in L_2(\R^N,\PZ)$ of \emph{order} $n$.
\end{definition}

\begin{remark}
    Let $f\in L_2(\R^N,\P_Z)$ and $n\in\N_0$. Then $R_n f$ is the best approximation of $f$ in $\calP_n^N$. Therefore, $\norm{f-R_nf}_{L_2(\R^N,\PZ)} = \dist(f, \calP_n^N)\to 0$ as $n\to\infty$.
\end{remark}

Let $(Z_0,\ldots,Z_{N-1})$ be independent. Then 
\[
    L_2(\R^N,\P_Z) = \bigotimes_{j=0}^{N-1} L_2(\R,\P_{Z_j}).
\]
By assumption, $\calP^1$ is dense in $L_2(\R,\P_{Z_j})$, so we can construct an orthogonal basis $(\Phi_n^j)_{n \in \calN_j}$ of polynomials $\Phi_n^j \in \calP_{n}^1$ of degree $\deg(\Phi_n^j)=n$. Here, the index set $\calN_j=\{0,\ldots,L_j\}$ for some $L_j\in \N_0$ in the case of a discrete probability measure with finite support and $\calN_j=\N_0$ otherwise. Due to the assumptions we will impose on the distribution of each $Z_j$ in the following, we can restrict our considerations to the case $\calN_j=\N_0$ for all $0 \le j \le N-1$.
Let $\calN\ce\calN_0\times\ldots\times\calN_{N-1}=\N_0^N$. For $\alpha\in\calN$, let $\Phi_\alpha\ce\bigotimes_{j=0}^{N-1} \Phi^j_{\alpha_j}$. Then $(\Phi_\alpha)_{\alpha\in\calN}$ is an orthogonal basis of $L_2(\R^N,\P_{Z})$.
Thus, it suffices to consider the one-dimensional case $N=1$ and then use a tensor product construction.
\begin{remark}
\label{rem:FourierCoeff}
    Let $(\Phi_\alpha)_{\alpha\in\calN}$ be an orthogonal basis of $L_2(\R^N,\P_Z)$ such that $\Phi_\alpha\in \calP_n^N$ for $\abs{\alpha}\leq n$ and $n \in \N_0$. Then $(\Phi_{\alpha})_{\abs{\alpha}\leq n}$ is an orthogonal basis of the closed subspace $\calP_n^N\subseteq L_2(\R^N,\P_Z)$, and hence for $f\in L_2(\R^N,\P_Z)$ and $n\in\N_0$, we have
    \[
        R_n f = \sum_{\abs{\alpha}\leq n} \frac{1}{\norm{\Phi_\alpha}_{L_2(\R^N,\P_Z)}^2} \sp{f}{\Phi_\alpha}_{L_2(\R^N,\P_Z)} \Phi_\alpha = \sum_{\abs{\alpha}\leq n} \hat{f}_\alpha \Phi_\alpha,
    \]
    where $\hat{f}_\alpha\ce \norm{\Phi_\alpha}_{\LZ}^{-2}\sp{f}{\Phi_\alpha}_{\LZ}$ is the generalized Fourier coefficient of $f$ for $\alpha \in \calN$.
\end{remark}

We list the orthogonal polynomials corresponding to common distributions, such as the standard normal distribution.

\begin{example}[Classical orthogonal polynomials]
\label{ex:distributions}
    Let $N=1$.
    \begin{enumerate}
        \item      
            Let $Z$ be standard normally distributed, i.e., $\P_Z(B) = \int_B (2\pi)^{-1/2} \e^{-z^2/2}\rmd z$ for Borel sets $B\seq\R$. 
            The (one-dimensional) \emph{Hermite polynomials} are defined as
            \begin{equation*}
                H_k(z)\ce (-1)^k \e^{\frac{z^2}{2}} \frac{\mathrm{d}^k}{\mathrm{d}z^k} \e^{-\frac{z^2}{2}}, \quad z \in \R, k\in \N_0.
            \end{equation*}
            Then $(H_k)_{k\in\N_0}$ is an orthogonal basis of $L_2(\R,\P_Z)$ and $\spt \P_Z = \R$.
        \item
            Let $\alpha,\beta>-1$ and $Z$ be Beta-distributed with parameters $\alpha$ and $\beta$, i.e.,
            \[
                \P_Z(B) = \int_B \1_{(-1,1)}(z) \frac{\Gamma(\alpha+\beta+2)}{2^{\alpha+\beta+1}\Gamma(\alpha+1)\Gamma(\beta+1)} (1-z)^\alpha (1+z)^\beta \rmd z
            \]
            for Borel sets $B\seq\R$. The (one-dimensional) \emph{Jacobi polynomials} are defined as
            \begin{equation*}
                J_k(z)\ce J_k^{(\alpha,\beta)}(z) \ce \frac{(-1)^k}{2^kk!}(1-z)^{-\alpha}(1+z)^{-\beta}\frac{\mathrm{d}^k}{\mathrm{d}z^k} \left[ (1-z)^{k+\alpha} (1+z)^{k+\beta} \right]
            \end{equation*}
            for $z\in(-1,1)$ and $k\in\N_0$.
            Then $(J_k)_{k\in\N_0}$ is an orthogonal basis of $L_2(\R,\P_Z)$ and $\spt \P_Z = [-1,1]$.
            Note that $\alpha=\beta$ yields \emph{Gegenbauer polynomials}, $\alpha=\beta=0$ yields \emph{Legendre polynomials} corresponding to $Z$ being uniformly distributed on $[-1,1]$, and $\alpha=\beta=-\frac{1}{2}$ yields \emph{Chebyshev polynomials}. 
        \item
            Let $\alpha>-1$ and $Z$ be Gamma-distributed with shape parameter $\alpha+1$ and rate $1$, i.e., $\P_Z(B) = \int_B \1_{(0,\infty)}(z) \Gamma(\alpha+1)^{-1} z^\alpha \e^{-z}\rmd z$ for Borel sets $B\seq\R$. 
            The (one-dimensional) \emph{Laguerre polynomials} are defined as
            \begin{equation*}
                L_k(z)\ce L_k^{(\alpha)}(z)\ce \frac{z^{-\alpha} e^z}{k!}\frac{\mathrm{d}^k}{\mathrm{d}z^k} (z^{k+\alpha}e^{-z}),\quad z>0, k \in \N_0.
            \end{equation*}
            Then $(L_k)_{k\in\N_0}$ is an orthogonal basis of $L_2(\R,\P_Z)$ and $\spt \P_Z = [0,\infty)$.
            Note that the special case $\alpha=0$ yields that $Z$ is exponentially distributed with rate $1$.
    \end{enumerate}
\end{example}

Note that, by \cite{OrthPolLesky1962}, up to affine transformations, these three distributions are exactly the probability distributions whose corresponding orthogonal polynomials yield an orthogonal basis of eigenfunctions of a Sturm--Liouville differential operator (see Example \ref{ex:Qs_1D} for the concrete operators). As these operators are essential for the proof of error estimates for the PC approximation, we assume the following. 

\begin{assumption}
\label{ass:Zdistribution}
    Let $N \in \N$ and $Z=(Z_0,\ldots,Z_{N-1})\from \Omega \to \R^N$ be a random variable. Let $(Z_0, \ldots, Z_{N-1})$ be independent and assume that each of them is either standard normally distributed, Gamma-distributed (with rate $1$), or Beta-distributed, explicitly allowing for different $Z_j$ to have a different distribution. 
\end{assumption}

\begin{definition}
\label{def:convratePCE}
    Let $\Yz \hra L_2(\R^N,\P_Z)$ and $\pz>0$. The polynomial chaos approximation is said to be \emph{convergent of order $\pz$} on $\Yz$ if there exists a constant $\Cz \geq0$ such that 
    \begin{equation*}
       \|R_nf-f\|_{L_2(\R^N,\P_Z)}\leq \Cz n^{-\pz} \|f\|_{\Yz}
    \end{equation*}
    for all $f \in \Yz$ and $n \in \N_0$.
\end{definition}

In order to obtain convergence orders, we need sufficient regularity of the corresponding function. 
For $\ell\in\N_0$, we recall the Sobolev spaces $H^\ell(\R^N,\P_{Z})$ consisting of all $f \in L_2(\R^N,\P_Z)$ such that the distributional derivatives up to order $\ell$ belong to $L_2(\R^N,\P_Z)$ as well. 
We recall the one-dimensional convergence orders first and write $\mathbf{z}$ for the function $z\mapsto z$.
\begin{proposition}[{see \cite[Theorems~6.2.4, 6.2.5, 6.2.6]{Funaro1992}}]
\label{prop:PCEconvergence1D}
    Let $N=1$ and $\ell\in \N_0$.
    \begin{enumerate}
        \item  
            Let $Z$ be standard normally distributed. Then there exists $C\geq 0$ such that for all $f\in H^{2\ell}(\R,\P_Z)$ and $n > 2\ell$,
            \[
            \norm{f-R_n f}_{L_2(\R,\P_Z)} \leq C n^{-\ell} \norm{\frac{\mathrm{d}^{2\ell}}{\mathrm{d}z^{2\ell}} f}_{L_2(\R,\P_Z)}.
            \]
        \item  
            Let $Z$ be Beta-distributed. Then there exists $C\geq 0$ such that for all $f\in H^\ell(\R,\P_Z)$ and $n>\ell$,
            \begin{align*}
                \norm{f-R_n f}_{L_2(\R,\P_Z)} &\leq C n^{-\ell} \norm{(1-\mathbf{z}^2)^{\ell/2}\frac{\mathrm{d}^{\ell}}{\mathrm{d}z^{\ell}} f}_{L_2(\R,\P_Z)} 
                \leq C n^{-\ell} \norm{\frac{\mathrm{d}^{\ell}}{\mathrm{d}z^{\ell}} f}_{L_2(\R,\P_Z)}.
            \end{align*}  
        \item  
            Let $Z$ be Gamma-distributed (with rate 1). Then there exists $C\geq 0$ such that
            \[\norm{f-R_n f}_{L_2(\R,\P_Z)} \leq C n^{-\ell} \norm{\mathbf{z}^{\ell}\frac{\mathrm{d}^{2\ell}}{\mathrm{d}z^{2\ell}} f}_{L_2(\R,\P_Z)}\]
            for all $f\in L_2(\R,\P_Z)$ being weakly differentiable up to order $2\ell$ such that $\mathbf{z}^{k/2} \frac{\mathrm{d}^{k}}{\mathrm{d}z^{k}} f \in L_2(\R,\P_Z)$ for all $k\in\{0,\ldots,2\ell\}$, and $n>2\ell$.        
    \end{enumerate}
\end{proposition}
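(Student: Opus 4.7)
The strategy I would adopt relies on realising each of the orthogonal polynomial bases as eigenfunctions of a second-order Sturm--Liouville operator $Q$ on $L_2(\R,\P_Z)$; such operators are collected in Example \ref{ex:Qs_1D} and the Appendix. Concretely, $Q\Phi_k = \lambda_k \Phi_k$ with $\lambda_k \asymp k^2$ for Jacobi polynomials and $\lambda_k \asymp k$ for Hermite and Laguerre polynomials. The ``loss'' of only one factor of $k$ per application of $Q$ for Hermite and Laguerre, compared with two factors for Jacobi, is precisely why $2\ell$ derivatives are required in parts (a) and (c) while $\ell$ suffice in part (b).

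With this spectral picture in hand, the argument reduces to a Parseval computation. Writing $f = \sum_{k\in\N_0} \hat f_k \Phi_k$, one has
\[
    \|f - R_n f\|_{L_2(\R,\P_Z)}^2 = \sum_{k>n} |\hat f_k|^2 \|\Phi_k\|_{L_2(\R,\P_Z)}^2.
\]
Self-adjointness of $Q$ yields $\widehat{Q^s f}_k = \lambda_k^{s} \hat f_k$ whenever $Q^s f \in L_2(\R,\P_Z)$ (for integer $s$, or more generally in the spectral-calculus sense), so that
\[
    |\hat f_k|^2 \leq \lambda_{n+1}^{-2s}\, |\widehat{Q^s f}_k|^2 \quad (k>n).
\]
Summation followed by Parseval gives $\|f - R_nf\|_{L_2(\R,\P_Z)} \leq \lambda_{n+1}^{-s} \|Q^s f\|_{L_2(\R,\P_Z)}$. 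Choosing $s=\ell$ in the Hermite and Laguerre cases and $s=\ell/2$ in the Jacobi case (using the spectral calculus for odd $\ell$) and inserting the eigenvalue asymptotics $\lambda_{n+1} \asymp n$ and $\lambda_{n+1} \asymp n^2$ yields the factor $n^{-\ell}$ in all three statements; the thresholds $n>2\ell$ and $n>\ell$ arise so that the implicit constants in these asymptotics can be absorbed into $C$.

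The main obstacle is to estimate $\|Q^s f\|_{L_2(\R,\P_Z)}$ by the weighted Sobolev norms on the right-hand side of the proposition. The Sturm--Liouville operators degenerate at the boundary of $\spt \P_Z$: one has $Qf(z) = -(1-z^2)f''(z) + (\text{lower order})$ in the Jacobi case and $Qf(z) = -zf''(z) + (\text{lower order})$ in the Laguerre case, while $Q$ for Hermite has non-degenerate coefficients. Iterating $Q^s$ and applying the Leibniz rule produces sums of terms of the form (an algebraic weight) $\times f^{(k)}$ with $k$ up to the required derivative count. A careful induction on $s$ — the content of the Appendix estimates for the concrete operators — yields exactly the weighted norms claimed: no weight for Hermite, the factor $(1-\mathbf{z}^2)^{\ell/2}$ for Beta, and the factor $\mathbf{z}^\ell$ for Gamma. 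The second inequality in (b) is then an immediate consequence of the pointwise bound $(1-z^2)^{\ell/2}\leq 1$ on $\spt \P_Z = [-1,1]$.
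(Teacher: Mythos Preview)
The paper does not prove this proposition itself; it is quoted from Funaro. Your Sturm--Liouville strategy is exactly the one the paper adopts for the multidimensional Theorem~\ref{thm:PCEbound} (via Lemma~\ref{lem:Fouriercoeffltimes} and Proposition~\ref{prop:sobnormQfvsf}), so in that sense you have reproduced the paper's own argument. Two points deserve comment, however.

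First, the Appendix lemmas bound $\|Q^s f\|_{L_2}$ by the \emph{full} weighted Sobolev norm $\|f\|_{H^{2s}_\rho}$, not by the single top-order seminorm $\|\rho^{s}\partial^{2s} f\|_{L_2}$ (or $\|(1-\mathbf z^2)^{\ell/2}\partial^\ell f\|_{L_2}$ for Jacobi) that actually appears on the right-hand side of the proposition. Your route therefore establishes a slightly weaker inequality than the one quoted. Funaro's proofs bypass $Q$ entirely: they exploit the explicit derivative relations $H_k' = kH_{k-1}$, $\partial_z J_k^{(\alpha,\beta)} = \tfrac{k+\alpha+\beta+1}{2}J_{k-1}^{(\alpha+1,\beta+1)}$, and $\partial_z L_k^{(\alpha)} = -L_{k-1}^{(\alpha+1)}$, and integrate by parts directly in the Fourier-coefficient integrals. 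This simultaneously shifts the weight and lowers the polynomial degree, producing the top-order seminorm without any lower-order remainders.

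Second, in the Jacobi case with odd $\ell$ your appeal to the spectral calculus for $Q^{\ell/2}$ is not covered by the Appendix, which handles only integer powers of $Q$. To make that step rigorous you would need, at minimum, the form identity $\|Q^{1/2}f\|_{L_2(\P_Z)}^2 = \int_{-1}^1 (1-z^2)\,|f'(z)|^2\,\rmd\P_Z(z)$ together with an iteration argument controlling $\|Q^{1/2}g\|$ by $\|(1-\mathbf z^2)^{1/2}g'\|$ for $g = Q^{(\ell-1)/2}f$; this is true but is a separate computation and again generates lower-order terms. Funaro's direct integration-by-parts avoids the issue altogether.
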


\begin{remark}
\label{rem:PCEconvergence1D}
    Note that the statements in Proposition \ref{prop:PCEconvergence1D} can be formulated for $n\in\N_0$ as well by adjusting the constant $C$, if necessary.
\end{remark}

We can now prove an error estimate yielding an order of convergence for the polynomial chaos approximation.
Denote by $J_{\mathrm{Normal}}, J_{\mathrm{Gamma}}, J_{\mathrm{Beta}}$ the set of all those $j\in \{0,\ldots,N-1\}$ such that $Z_j$ is standard normally distributed, Gamma-distributed, or Beta-distributed, respectively.
Motivated by the norms appearing in the error estimates of Proposition \ref{prop:PCEconvergence1D}, let $\rho\from\R^N\to\R^N$ be defined by
\begin{equation}
\label{eq:defRho}
    \rho(z)_j\ce \begin{cases}
        1 & j\in J_{\mathrm{Normal}}\cup J_{\mathrm{Beta}},\\
        z_j & j\in J_{\mathrm{Gamma}}.
    \end{cases}
\end{equation}
For $\ell\in \N_0$, we define $H^{\ell}_\rho(\R^N,\P_Z)$ to be the set of all $f \in L_2(\R^N, \P_Z)$ such that $f$ is weakly differentiable up to order $\ell$ and $\rho(\cdot)^{\alpha/2} \partial^\alpha f \in L_2(\R^N,\P_Z)$ for all $\alpha \in \calN$ with $\abs{\alpha}\leq \ell$. Here, $\rho(z)^{\alpha/2}=\prod_{j=0}^{N-1} \rho(z)_j^{\alpha_j/2}\in \R$ for all $z \in \R^N$, $\alpha \in \calN$. We equip $H^{\ell}_\rho(\R^N,\P_Z)$ with the norm $\norm{\cdot}_{H^{\ell}_\rho(\R^N,\P_Z)}$ given by
\[
    \norm{f}_{H^{\ell}_\rho(\R^N,\P_Z)}^2 \ce \sum_{\abs{\alpha}\leq \ell} \big\|\rho(\cdot)^{\alpha/2} \partial^\alpha f\big\|_{\LZ}^2,
\]
which makes $H^{\ell}_\rho(\R^N,\P_Z)$ a Hilbert space. Here and in the following, when summing over $|\alpha|\le \ell$, the index $\alpha$ is assumed to be in $\calN=\N_0^N$. The weighted Sobolev spaces are those spaces $\Yz$ from Definition \ref{def:convratePCE} on which the PC approximation converges of a certain order, as stated in the following main result of this subsection for the scalar-valued case. It extends \cite[Thm.~3.6]{Xiu2010} to the multi-dimensional case (i.e., from $\R$ to $\R^N$) with explicit constants and additionally covers Beta- and Gamma-distribution.

\begin{theorem}
\label{thm:PCEbound}
    Suppose that $Z$ satisfies Assumption \ref{ass:Zdistribution}. Let $\ell \in \N_0$.
    Then 
    \begin{equation*}
        \|R_nf-f\|_{L_2(\R^N, \P_Z)} \leq C_{\ell,N} n^{-\ell} \|f\|_{H^{2\ell}_\rho(\R^N,\P_Z)}
    \end{equation*}
    for all $f \in H^{2\ell}_\rho(\R^N,\P_Z)$ and $n\in\N_0$ with constant
    \begin{align*}
        C_{\ell,N} \ce N^{\ell/2} \prod_{r=0}^{\ell-1} \Big(\max_{0 \le j \le N-1} C_j(2r)\Big),
    \end{align*}
    where $C_j(2r)$ is as defined in Proposition \ref{prop:sobnormQfvsf}. In particular, the polynomial chaos approximation is convergent of order $\ell$ on $H^{2\ell}_\rho(\R^N,\P_Z)$ given that $\ell >0$.
\end{theorem}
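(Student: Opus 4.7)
The approach I would take builds on the spectral structure of the polynomial chaos basis. As already hinted at after Assumption \ref{ass:Zdistribution}, each admissible measure $\PP_{Z_j}$ has its orthogonal polynomials as eigenfunctions of a one-dimensional Sturm--Liouville operator $Q_j$ (cf.\ Example \ref{ex:Qs_1D}); write $\lambda_n^j$ for the eigenvalue of $Q_j$ at $\Phi_n^j$. A direct check of the three admissible cases gives $\lambda_n^j \ge n$. Define the tensorized operator $\mathcal{Q} \ce \sum_{j=0}^{N-1} Q_j$ on $L_2(\R^N,\PP_Z)$, where each $Q_j$ acts on the $j$-th coordinate. Since the summands act on disjoint variables they commute, and $(\Phi_\alpha)_{\alpha\in \calN}$ diagonalizes $\mathcal{Q}$ with eigenvalues $\Lambda_\alpha=\sum_j \lambda_{\alpha_j}^j \ge |\alpha|$. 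In particular $\mathcal{Q}$ preserves each $\calP_n^N$, so $\mathcal{Q}$ commutes with $R_n$.

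The core estimate is a Markov-type inequality in the eigenbasis. Using Parseval in the orthogonal basis $(\Phi_\alpha)$ and $\Lambda_\alpha>n$ on the tail,
\begin{equation*}
\|f - R_n f\|_{\LZ}^2
= \sum_{|\alpha|>n} |\hat f_\alpha|^2 \|\Phi_\alpha\|_{\LZ}^2
\le \frac{1}{n^{2\ell}} \sum_{|\alpha|>n} \Lambda_\alpha^{2\ell} |\hat f_\alpha|^2 \|\Phi_\alpha\|_{\LZ}^2
\le \frac{1}{n^{2\ell}} \|\mathcal{Q}^\ell f\|_{\LZ}^2.
\end{equation*}
This reduces the theorem to a Sobolev bound of the form $\|\mathcal{Q}^\ell f\|_{\LZ} \le C_{\ell,N} \|f\|_{H^{2\ell}_\rho(\R^N,\PP_Z)}$. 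For the latter, the plan is to combine the one-dimensional estimates of Proposition \ref{prop:sobnormQfvsf}, which relate $\|Q_j^r g\|_{L_2(\R,\PP_{Z_j})}$ to a weighted Sobolev norm of $g$ with constant $C_j(2r)$, with an iteration over the $N$ coordinates. Writing $\mathcal{Q}^\ell$ as a commuting product and applying Proposition \ref{prop:sobnormQfvsf} slice-wise in each variable (against $\PP_{Z_j^c}$ in the remaining variables), the weights produced by the Laguerre bound in Gamma directions exactly match the factor $z_j^{\alpha_j/2}$ built into the definition \eqref{eq:defRho} of $\rho$, while Normal and Beta directions yield no additional weight. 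Each inductive step contributes a $\sqrt N$ from a Cauchy--Schwarz summation over $j$ together with a factor $\max_j C_j(2r)$ at level $r$, accumulating into the asserted constant $N^{\ell/2} \prod_{r=0}^{\ell-1} \max_j C_j(2r)$. For Beta variables one notes that Proposition \ref{prop:PCEconvergence1D}(b) requires only $\ell$ derivatives; the weaker $H^{2\ell}_\rho$ hypothesis is used here to keep the multi-dimensional statement uniform.

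The main obstacle is the sharp constant in the Sobolev bound for $\|\mathcal{Q}^\ell f\|$. A brute-force multinomial expansion of $\bigl(\sum_j Q_j\bigr)^\ell$ combined with direct termwise estimation produces $N^\ell$ rather than $N^{\ell/2}$, since there are $N^\ell$ ordered products of length $\ell$. The key to recovering $N^{\ell/2}$ is to perform the Cauchy--Schwarz aggregation in $j$ \emph{inside} each recursion step, exploiting the identity $\Lambda_\alpha \le \sqrt N \bigl(\sum_j (\lambda^j_{\alpha_j})^2\bigr)^{1/2}$ rather than the triangle inequality $\Lambda_\alpha \le N \max_j \lambda^j_{\alpha_j}$, and then iterating only across powers $r=0,\ldots,\ell-1$ of the Sturm--Liouville operators as in Proposition \ref{prop:sobnormQfvsf}. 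Once this bookkeeping is carried out, the stated bound and explicit form of $C_{\ell,N}$, and hence the convergence of order $\ell$ on $H^{2\ell}_\rho(\R^N,\PP_Z)$, follow.
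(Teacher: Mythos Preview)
Your approach is essentially the paper's own: Parseval on the tail, the spectral identity $\widehat{Q^\ell f}_\alpha=\Lambda_\alpha^\ell\hat f_\alpha$ (which the paper packages as Lemma~\ref{lem:Fouriercoeffltimes}), the lower bound $\Lambda_\alpha\ge n$ for $|\alpha|>n$, and then the Sobolev control of $\|Q^\ell f\|_{\LZ}$ by iterating Proposition~\ref{prop:sobnormQfvsf} $\ell$ times. Your discussion of the ``obstacle'' with $N^\ell$ versus $N^{\ell/2}$ is a detour: the paper never expands $(\sum_j Q_j)^\ell$ multinomially but simply applies Proposition~\ref{prop:sobnormQfvsf} (which already contains the $\sqrt{N}$ from the Cauchy--Schwarz step you describe) at Sobolev levels $0,2,\ldots,2\ell-2$, giving exactly $N^{\ell/2}\prod_{r=0}^{\ell-1}\max_j C_j(2r)$; note also that Proposition~\ref{prop:sobnormQfvsf} is already the multi-dimensional statement, not a one-dimensional one.
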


For the proof of Theorem \ref{thm:PCEbound} we first need a lemma.
Let $Q_j$ be the Sturm--Liouville differential operator associated with the orthogonal polynomials corresponding to $\P_{Z_j}$ for $j\in\{0,\ldots,N-1\}$ and denote its $k$-th eigenvalue by $\lambda_k^j$ for $k\in \N_0$; cf. Example \ref{ex:Qs_1D}. In higher dimension, we consider $Q = \sum_{j=0}^{N-1} \widetilde{Q}_j$ with $\widetilde{Q}_j\ce I\otimes \ldots \otimes I \otimes Q_j \otimes I \otimes \ldots \otimes I$, where $Q_j$ acts on the $j$-th component.

\begin{lemma}
\label{lem:Fouriercoeffltimes}
    Suppose that $Z$ satisfies Assumption \ref{ass:Zdistribution}. Let $\ell\in\N_0$, $\alpha \in \calN$, and $f \in H^{2\ell}_\rho(\R^N,\P_Z)$. Then 
    \begin{equation*}
        \sp{f}{\Phi_\alpha}_{L_2(\R^N, \PP_Z)} = \bigg(\sumj \lambda_{\alpha_j}^j\bigg)^{-\ell} \sp{ Q^\ell f}{\Phi_\alpha}_{L_2(\R^N, \PP_Z)}.
    \end{equation*}
\end{lemma}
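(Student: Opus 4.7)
The plan is to exploit the eigenfunction property of $\Phi_\alpha$ under the Sturm--Liouville operator $Q$, combined with an iterated integration by parts in $\LZ$.

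First I would establish that $\Phi_\alpha$ is an eigenfunction of $Q$. By Example \ref{ex:Qs_1D}, each one-variable factor $\Phi^j_{\alpha_j}$ satisfies $Q_j \Phi^j_{\alpha_j} = \lambda^j_{\alpha_j} \Phi^j_{\alpha_j}$. Since $\widetilde{Q}_j$ acts only on the $j$-th tensor factor and $\Phi_\alpha = \bigotimes_{j=0}^{N-1}\Phi^j_{\alpha_j}$, we obtain $\widetilde{Q}_j \Phi_\alpha = \lambda^j_{\alpha_j} \Phi_\alpha$. Summing over $j$ yields $Q\Phi_\alpha = \big(\sumj \lambda^j_{\alpha_j}\big)\Phi_\alpha$, and since polynomials lie in the domain of every power of $Q$, iterating gives $Q^\ell \Phi_\alpha = \big(\sumj \lambda^j_{\alpha_j}\big)^\ell \Phi_\alpha$.

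Second, I would prove the symmetry identity $\sp{Q^\ell f}{\Phi_\alpha}_\LZ = \sp{f}{Q^\ell \Phi_\alpha}_\LZ$ for $f \in H^{2\ell}_\rho(\R^N,\P_Z)$. The heart of the argument is the one-step identity $\sp{Q_j f}{g}_{L_2(\R,\P_{Z_j})} = \sp{f}{Q_j g}_{L_2(\R,\P_{Z_j})}$ when $g$ is a polynomial. Writing $Q_j$ in Liouville form $Q_j g = - w_j^{-1}(p_j w_j g')'$, where $w_j$ is the Lebesgue density of $\P_{Z_j}$, one has $p_j \equiv 1$ for Hermite, $p_j(z) = 1 - z^2$ for Jacobi, and $p_j(z) = z$ for Laguerre. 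Two integrations by parts in $z_j$ produce the symmetric form $\int p_j \,\partial_{z_j} f \,\overline{\partial_{z_j} g}\, \rmd \P_{Z_j}$, with boundary contributions vanishing due to (a) Gaussian decay of $w_j$ at $\pm\infty$ in the Hermite case, (b) the vanishing of $p_j$ at $\pm 1$ in the Jacobi case, and (c) the combined decay of $p_j(z) = z$ at $0$ and of $e^{-z}$ at $\infty$ in the Laguerre case. Crucially, the weight $\rho(\cdot)^{\alpha/2}$ appearing in the $H^{2\ell}_\rho$-norm is exactly $\prod_j p_j^{\alpha_j/2}$, ensuring that all derivative-plus-weight combinations arising from expanding $Q^\ell f = \sum_{j_1,\ldots,j_\ell} \widetilde{Q}_{j_1}\cdots \widetilde{Q}_{j_\ell} f$ lie in $\LZ$. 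An induction on $\ell$ (each application of $\widetilde{Q}_j$ costs two derivatives and one power of $\rho_j$) then justifies the $\ell$-fold transfer of $Q^\ell$ from $\Phi_\alpha$ to $f$.

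Combining the two steps yields
\[
    \sp{Q^\ell f}{\Phi_\alpha}_\LZ = \sp{f}{Q^\ell \Phi_\alpha}_\LZ = \Big(\sumj \lambda^j_{\alpha_j}\Big)^\ell \sp{f}{\Phi_\alpha}_\LZ,
\]
and dividing by $\big(\sumj \lambda^j_{\alpha_j}\big)^\ell$ (implicitly nonzero; for $\alpha = 0$ and $\ell \geq 1$ both sides are $0$ because $\Phi_0 \in \ker Q$, so the identity is interpreted conventionally in that degenerate case) gives the claim. The main obstacle I anticipate is the clean execution of the iterated integration by parts: one must verify that the intermediate $Q^k f$ belong to $\LZ$ for all $0 \leq k \leq \ell$ and that boundary terms vanish at each step. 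The Laguerre case is the most delicate because both endpoints $0$ and $\infty$ contribute, and the match between the Liouville weight $p_j(z) = z$ and the choice of $\rho_j$ in \eqref{eq:defRho} is precisely what singles out $H^{2\ell}_\rho$ as the right regularity class.
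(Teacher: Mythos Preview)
Your proposal is correct and follows essentially the same route as the paper: establish $Q^\ell\Phi_\alpha=\bigl(\sum_j\lambda_{\alpha_j}^j\bigr)^\ell\Phi_\alpha$, then transfer $Q^\ell$ from $\Phi_\alpha$ to $f$ by symmetry of $Q$ in $L_2(\R^N,\P_Z)$. The paper is terser---it simply asserts the symmetry of $Q$ and invokes Proposition~\ref{prop:sobnormQfvsf} (the appendix bound $\|Qf\|_{H^{2\ell}_\rho}\lesssim\|f\|_{H^{2\ell+2}_\rho}$, iterated) to guarantee $Q^\ell f\in L_2(\R^N,\P_Z)$---whereas you spell out the Liouville-form integration by parts and the boundary analysis explicitly; your observation about the degenerate case $\alpha=0$ is also a detail the paper's statement glosses over.
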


\begin{proof}
    We have $\Phi_\alpha = \otimes_{j=0}^{N-1} \Phi^j_{\alpha_j}$ with the orthogonal polynomial $\Phi^j_{\alpha_j}$ of degree $\alpha_j$ to $\P_{Z_j}$ for $j\in\{0,\ldots,N-1\}$. Then $Q_j\Phi^j_{\alpha_j} = \lambda^j_{\alpha_j}\Phi^j_{\alpha_j}$ for $j\in\{0,\ldots,N-1\}$.
    Thus, for $\ell \in \N_0$ we observe
    \[
        Q\Phi_\alpha = \sum_{j=0}^{N-1} \widetilde{Q}_j \Phi_\alpha = \sum_{j=0}^{N-1} \lambda_{\alpha_j}^j\Phi_\alpha \quad \Rightarrow \quad Q^\ell\Phi_\alpha = \bigg(\sum_{j=0}^{N-1} \lambda_{\alpha_j}^j\bigg)^\ell \Phi_\alpha.
    \]
    Since $f\in H^{2\ell}_\rho(\R^N,\P_Z)$ yields $Q^\ell f\in L_2(\R^N,\P_Z)$ (see Proposition \ref{prop:sobnormQfvsf}) and $Q$ is symmetric in $L_2(\R^N,\P_Z)$, we deduce that
    \begin{align*}
        \sp{f}{\Phi_\alpha}_{L_2(\R^N,\P_Z)} & = \bigg(\sum_{j=0}^{N-1} \lambda_{\alpha_j}^j\bigg)^{-\ell}\sp{f}{Q^\ell\Phi_\alpha}_{L_2(\R^N,\P_Z)} 
        = \bigg(\sum_{j=0}^{N-1} \lambda_{\alpha_j}^j\bigg)^{-\ell}\sp{Q^\ell f}{\Phi_\alpha}_{L_2(\R^N,\P_Z)}. \qedhere
    \end{align*}
\end{proof}

\begin{proof}[Proof of Theorem \ref{thm:PCEbound}]
    Let $\ell \in \N_0$ and $f \in H^{2\ell}_\rho(\R^N,\P_Z)$. 
    By Parseval's identity, Remark \ref{rem:FourierCoeff}, and Lemma \ref{lem:Fouriercoeffltimes}, the approximation error is given by 
    \begin{align*}
         \|f-R_nf\|_\LZ^2 &= \sum_{|\alpha|\ge n+1} \hat{f}_\alpha^2  \|\Phi_\alpha \|_\LZ^2 = \sum_{|\alpha|\ge n+1} \frac{\sp{ f}{\Phi_\alpha}_\LZ^2}{\|\Phi_\alpha \|_\LZ^2} \\
        &= \sum_{|\alpha|\ge n+1} \bigg(\sumj \lambda_{\alpha_j}^j\bigg)^{-2\ell}\frac{\sp{ Q^\ell f}{\Phi_\alpha}_\LZ^2}{\|\Phi_\alpha \|_\LZ^2} .
    \end{align*}
    Further define the lower eigenvalue sum bound 
    \begin{equation*}
        d(n)\ce\min_{|\alpha|\ge n} \sum_{j=0}^{N-1} \lambda_{\alpha_j}^j \quad (n \in \N_0).
    \end{equation*}    
    Then 
    \begin{align*}
        \|f-R_nf\|_\LZ^2    
        &\le d(n+1)^{-2\ell}\sum_{\alpha \in \calN} \frac{1}{\|\Phi_\alpha \|_\LZ^2} \sp{ Q^\ell f}{\Phi_\alpha}_\LZ^2\\
        &= d(n+1)^{-2\ell}\sum_{\alpha \in \calN} \big(  \widehat{(Q^\ell f)}_\alpha\big)^2 \|\Phi_\alpha \|_\LZ^2
        = d(n+1)^{-2\ell} \big\|Q^\ell f\big\|_\LZ^2. 
    \end{align*}
    A repeated application of Proposition \ref{prop:sobnormQfvsf} results in
    \begin{align*}
        \|f-R_nf\|_\LZ
        &\le C_{\ell,N} d(n+1)^{-\ell} \|f\|_{H_\rho^{2\ell}(\R^N, \PP_Z)}.
    \end{align*}
    The assertion follows from $d(n+1)\geq n$, which is immediate from the eigenvalues stated in Example \ref{ex:Qs_1D}.
\end{proof}

\begin{remark}
    \begin{enumerate}
        \item We remark that we can slightly improve the result in Theorem \ref{thm:PCEbound} by making use of the lower eigenvalue sum bound as in the proof.
        \item Note that for the Beta-distributed components of $(Z_0,\ldots,Z_{N-1})$, for the corresponding coordinates, it suffices to have weak derivatives up to order $\ell$ only; see also Proposition \ref{prop:PCEconvergence1D}.
    \end{enumerate}
\end{remark}

The error bound for the polynomial chaos approximation can easily be extended from the scalar- to the vector-valued case. This extension is required for the joint convergence rate in space, time, and randomness of Theorem \ref{thm:jointrate_symmetric} in Section \ref{sec:jointRate}. Note that for a Hilbert space $H$, $L_2(\R^N,\PP_Z;H) = L_2(\R^N,\PP_Z)\otimes H$.

\begin{definition}
\label{def:PCvector}
    Let $H$ be a separable Hilbert space with an orthonormal basis $\calB$ and let $n \in \N_0$. Then the \emph{(vector-valued) polynomial chaos approximation} $\bfR_n\bff$ of $\bff = \sum_{\varphi \in \calB} f_\varphi \varphi \in L_2(\R^N,\PP_Z;H)$ of \emph{order} $n$ is given by 
    \[
        \bfR_n\bff \ce \sum_{\varphi \in \calB} (R_n f_\varphi) \varphi \in \calP_n^N\otimes H.
    \]
\end{definition}

\begin{remark}
Note that $\bfR_n$ is well-defined. Indeed, let $\calB$ and $\calC$ be orthonormal bases of $H$, $n\in\N_0$, and $\bff=\sum_{\varphi \in \calB} f_\varphi \varphi = \sum_{\psi \in \calC} f_\psi \psi \in L_2(\R^N,\PP_Z;H)$. Then
$f_\psi = \sp{\bff(\cdot)}{\psi}_H$ with the overloaded notation $\sp{\bff(\cdot)}{\psi}_H \ce [z \mapsto \sp{\bff(z)}{\psi}_H]\in \LZ$ and $\psi = \sum_{\varphi\in\calB} \sp{\psi}{\varphi}_H \varphi$ for all $\psi\in \calC$, and therefore
\begin{align*}
    R_n f_\psi & = R_n \sp{\bff(\cdot)}{\psi}_H 
    = R_n \Big(\bff(\cdot)\Big\vert\sum_{\varphi\in\calB} \sp{\psi}{\varphi}_H \varphi\Big)_H \\
    & = R_n \sum_{\varphi\in\calB} \sp{\bff(\cdot)}{\varphi}_H \overline{\sp{\psi}{\varphi}_H}  
    = \sum_{\varphi\in\calB} R_n \sp{\bff(\cdot)}{\varphi}_H \sp{\varphi}{\psi}_H
\end{align*}
by continuity of $R_n$. Since also $f_\varphi = \sp{\bff(\cdot)}{\varphi}_H$ and $\varphi = \sum_{\psi\in\calC} \sp{\varphi}{\psi}_H \psi$ for all $\varphi\in \calB$, we conclude well-posedness from
\begin{align*}
    \sum_{\psi\in\calC} (R_n f_\psi) \psi & = \sum_{\psi\in\calC} \sum_{\varphi\in\calB} R_n \sp{\bff(\cdot)}{\varphi}_H \sp{\varphi}{\psi}_H \psi 
    = \sum_{\varphi\in\calB} R_n \sp{\bff(\cdot)}{\varphi}_H \varphi 
    = \sum_{\varphi\in\calB} (R_n f_\varphi) \varphi.
\end{align*}
\end{remark}

\begin{remark}
\label{rem:PCE_subspace}
    Let $H$ be a separable Hilbert space, $Y\subseteq H$ a subspace, and $\bff\in L_2(\R^N,\PZ;Y)$. Then
     $\bfR_n \bff\in \calP_n^N \otimes Y$ for all $n\in\N_0$.
\end{remark}

Analogously to the scalar-valued case, we define convergence orders.
\begin{definition}
    Let $H$ be a separable Hilbert space, $\Yz \hra L_2(\R^N,\P_Z;H)$, $\pz>0$. The polynomial chaos approximation is said to be \emph{convergent of order $\pz$} on $\Yz$ if there exists a constant $\Cz \geq0$ such that 
    \begin{equation*}
       \|\bfR_n\bff-\bff\|_{L_2(\R^N,\P_Z;H)}\leq \Cz n^{-\pz} \|\bff\|_{\Yz}
    \end{equation*}
    for all $\bff \in \Yz$ and $n \in \N_0$.
\end{definition}

From the scalar-valued case in Theorem \ref{thm:PCEbound}, we deduce the main result of this subsection, an error estimate for the vector-valued PC approximation.

\begin{corollary}
\label{cor:PCEboundvector}
    Suppose that $Z$ satisfies Assumption \ref{ass:Zdistribution}. 
    Let $\ell \in \N_0$. Then there exists $C=C(\ell,N)\geq 0$ such that
    for all separable Hilbert spaces $H$, we have
    \begin{equation*}
        \|\bfR_n\bff-\bff\|_{L_2(\R^N, \P_Z; H)} \leq Cn^{-\ell} \|\bff\|_{H^{2\ell}_\rho(\R^N,\P_Z;H)}
    \end{equation*}
    for all $\bff \in H^{2\ell}_\rho(\R^N,\P_Z;H)$ and $n\in\N_0$.
    In particular, the polynomial chaos approximation is convergent of order $\ell$ on $H^{2\ell}_\rho(\R^N,\P_Z;H)$ if $\ell>0$.
\end{corollary}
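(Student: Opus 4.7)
The plan is to reduce the vector-valued estimate to the scalar one (Theorem \ref{thm:PCEbound}) coordinate-by-coordinate in an orthonormal basis of $H$, exploiting that $\bfR_n$ was defined precisely in that way.

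First, fix an orthonormal basis $\calB$ of $H$ and write $\bff = \sum_{\varphi\in\calB} f_\varphi \varphi$ with $f_\varphi = \sp{\bff(\cdot)}{\varphi}_H \in \LZ$. By Definition \ref{def:PCvector}, $\bfR_n\bff = \sum_{\varphi\in\calB}(R_n f_\varphi)\varphi$, so Parseval's identity in $H$ (applied pointwise in $z$ and then integrated) yields
\[
    \|\bfR_n\bff-\bff\|_{L_2(\R^N,\PP_Z;H)}^2
    = \sum_{\varphi\in\calB}\|R_n f_\varphi - f_\varphi\|_{\LZ}^2.
\]
To each summand I apply Theorem \ref{thm:PCEbound}, obtaining
\[
    \|\bfR_n\bff-\bff\|_{L_2(\R^N,\PP_Z;H)}^2
    \leq C_{\ell,N}^2 \, n^{-2\ell} \sum_{\varphi\in\calB}\|f_\varphi\|_{H^{2\ell}_\rho(\R^N,\PP_Z)}^2.
\]

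The second step is to identify the right-hand side with $\|\bff\|_{H^{2\ell}_\rho(\R^N,\PP_Z;H)}^2$. The key observation is that for each $\varphi\in\calB$ the map $h\mapsto \sp{h}{\varphi}_H$ is a bounded linear functional on $H$, and such functionals commute with weak differentiation on Bochner-valued distributions; hence for every multi-index $\alpha$ with $|\alpha|\leq 2\ell$ one has $\partial^\alpha f_\varphi = \sp{\partial^\alpha \bff(\cdot)}{\varphi}_H$ as an element of $\LZ$. Applying Parseval in $H$ once more (pointwise in $z$) gives
\[
    \|\rho(z)^{\alpha/2}\partial^\alpha \bff(z)\|_H^2
    = \sum_{\varphi\in\calB} |\rho(z)^{\alpha/2}\partial^\alpha f_\varphi(z)|^2
\]
for $\PZ$-a.e.\ $z\in\R^N$. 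Integrating over $\R^N$ and interchanging sum and integral by Tonelli, then summing over $|\alpha|\leq 2\ell$, yields
\[
    \sum_{\varphi\in\calB}\|f_\varphi\|_{H^{2\ell}_\rho(\R^N,\PP_Z)}^2 = \|\bff\|_{H^{2\ell}_\rho(\R^N,\PP_Z;H)}^2,
\]
which combined with the previous display gives the claim with $C = C_{\ell,N}$.

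The only non-routine point is justifying that weak derivatives of $\bff$ commute with the basis expansion; this is standard for Bochner-Sobolev functions (continuous linear functionals commute with weak derivatives of distributions with values in $H$), but it must be invoked explicitly to pass from the scalar coefficients $f_\varphi$ back to $\bff$ in the weighted Sobolev norm. Everything else is Parseval and Fubini/Tonelli. The constant $C(\ell,N)$ can be taken as the constant $C_{\ell,N}$ of Theorem \ref{thm:PCEbound}, which depends only on $\ell$ and $N$ and in particular not on $H$.
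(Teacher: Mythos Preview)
Your proof is correct and follows essentially the same approach as the paper: expand $\bff$ in an orthonormal basis of $H$, apply Parseval to reduce to the scalar coefficients $f_\varphi$, invoke Theorem \ref{thm:PCEbound} termwise, and then recombine via Parseval to recover the $H^{2\ell}_\rho(\R^N,\PP_Z;H)$-norm. The paper's proof is slightly more terse about the final identification (it simply writes the equality without dwelling on commuting weak derivatives with $\sp{\cdot}{\varphi}_H$), but the argument is the same and yields the same constant $C_{\ell,N}$.
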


\begin{proof}
    Let $H$ be a separable Hilbert space and $\calB$ an orthonormal basis of $H$. Let $\bff\in H^{2\ell}_\rho(\R^N,\P_Z;H)$. Then $\bff = \sum_{\psi \in \calB} \sp{\bff(\cdot)}{\psi}_H \psi$ and thus 
    \begin{align*}
        (\bfR_n\bff)(z) = \sum_{\psi \in \calB} (R_n f_\psi)(z) \psi
        = \sum_{\psi \in \calB} \bigl(R_n \sp{\bff(\cdot)}{\psi}_H(z)\bigr)\psi
        =  \big(R_n\sp{\bff(\cdot)}{\varphi}_H\big)(z) 
    \end{align*}
    for $\PZ$-a.e.\ $z \in \R^N$. Parseval's identity, the monotone convergence theorem, and Theorem \ref{thm:PCEbound}  then imply
    \begin{align*}
        \|\bfR_n\bff-\bff\|_{L_2(\R^N,\PP_Z; H)}^2 &= \sum_{\varphi \in \calB} \int_{\R^N} \big|\sp{(\bfR_n\bff-\bff)(z)}{\varphi}_H\big|^2\dPZ(z)\\
        &= \sum_{\varphi \in \calB} \int_{\R^N} \big| (R_n\sp{\bff(\cdot)}{\varphi}_H)(z) - \sp{\bff(z)}{\varphi}_H \big|^2 \dPZ(z)\\
        &= \sum_{\varphi \in \calB} \| R_n\sp{\bff(\cdot)}{\varphi}_H - \sp{\bff(\cdot)}{\varphi}_H \|_{L_2(\R^N,\PP_Z)}^2\\
        &\le C_{\ell,N}^2 n^{-2\ell} \sum_{\varphi \in \calB} \| \sp{\bff(\cdot)}{\varphi}_H \|_{H^{2\ell}_\rho(\R^N,\P_Z)}^2 
        = C_{\ell,N}^2 n^{-2\ell} \|\bff\|_{H^{2\ell}_\rho(\R^N,\P_Z;H)}^2. \qedhere
    \end{align*}
\end{proof}

\section{Deterministic Approximation Results}
\label{sec:jointRate_deterministic}

Let $V,H$ be separable Hilbert spaces with $V\hra H$ dense. Let $\form\from V\times V\to \K$ be a bounded and coercive form, i.e., there exist $K\geq 0$, $\kappa>0$ such that
\[
    \abs{\form(u,v)} \leq K\norm{u}_V\norm{v}_V\quad \text{and}\quad\Re \form(u) \geq \kappa \norm{u}_V^2
\]
for all $u,v\in V$. Let $A$ be the operator associated with $\form$ in $H$ and $(T(t))_{t\geq0}$ the contractive $C_0$-semigroup generated by $-A$. Denote the graph norm in $H$ by $\|\cdot\|_A$. Let $u_0\in H$.
Consider the abstract Cauchy problem
\[u'(t) = -A u(t) \quad(t>0),\quad u(0) = u_0.\]
Its mild solution is given by
\[u\from [0,\infty)\to H, \quad u(t) \ce T(t)u_0\quad(t\geq 0).\]

Let $(V_m)_{m \in \N}$ be an approximating sequence of $V$ and denote the closure of $V_m$ in $\|\cdot\|_H$ by $H_m$.
The spatial approximation of $a$ as discussed in Subsection \ref{subsec:spatial} yields approximating forms $(\form_{m})_{m \in \N}$ associated with operators $(A_{m})_{m \in \N}$. Moreover, for $m\in\N$ let $(T_{m}(t))_{t\geq 0}$ be the $C_0$-semigroup generated by $-A_{m}$.

For $m\in\N$, we recall the $H$-orthogonal projection $P_m\from H \to H_m \seq H$ and the corresponding embedding $J_m\from H_m\to H$ from Subsection \ref{subsec:spatial}. This gives rise to approximate solutions 
\[
    u_m\from [0,\infty)\to H_m,\quad u_m(t) \ce T_m(t)P_mu_0 \quad(t\geq 0).
\]

For $m \in \N$, let $F_m\from [0, \infty) \to \calL(H_m)$ be a time discretization method for $(T_m(t))_{t \ge 0}$ as in Subsection \ref{subsec:temporal}. Fix a final time $\bar{T} \ge 0$. For $k, N_k \in \N$ let $\tau_k=(\tau_k^i)_{1 \le i \le N_k} \in [0,\bar{T}]^{N_k}$ be time steps with associated grid $\calT_k$, $F_{m,k}(t) \ce \prod_{i=1}^{N_{t,k}} F_m(\tau_k^i)$, and  $N_{t,k}$ as defined in Subsection \ref{subsec:temporal}. Temporal approximation with $F_{m,k}$ then yields the approximate solution 
 \[
    u_{m,k}\from \calT_k \to H_m,\quad u_{m,k}(t) \ce F_{m,k}(t) P_m u_0 \quad(t\in \calT_k).
\]

The following theorem is a version of \cite[Theorem 7.8]{Thomee2006}.

\begin{theorem}
\label{thm:jointrate_deterministic_symmetric}
     Let $\form$ be symmetric, bounded, and coercive, and assume that the embedding $V\hra H$ is compact.
     Let $s>0$ and assume that the space discretization converges with order $\px>0$ on $\dom(A^s)$ for the stationary problem.
     Let the time discretization methods $F_m$ be induced by an A-stable rational function $r$ for all $m\in\N$ and let $\pt>0$ be the order of convergence of the time discretizations.\\
    Then for all $\bar{T}>0$, there exist $C_{\bar{T}}\geq 0$ and $\tau_0=\tau_0(\bar{T})>0$ such that for $\max_{i=1,\ldots,N_k} \tau_k^i \leq \tau_0$, we have
    \begin{align*}
        \|J_m u_{m,k}(t) - u(t)\|_{H}
        & \le C_{\bar{T}} \bigg(m^{-\px} + \Big(\max_{i=1,\ldots,N_k} \tau_k^i\Big)^{\pt}\bigg) \|u_0\|_{A^{\max\{s+1,\pt\}}}
    \end{align*}
     for all $t \in \calT_k$, $u_0 \in \dom(A^{\max\{s+1,\pt\}})$, and $m,k\in\N$.
\end{theorem}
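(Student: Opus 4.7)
The strategy is the classical Thom\'ee-type splitting into temporal and spatial errors, with the combined regularity index $\max\{s+1,\pt\}$ arising from the two separate sources of regularity. I would begin with the triangle inequality
\[
    \|J_m u_{m,k}(t) - u(t)\|_H
    \le \underbrace{\|J_m(F_{m,k}(t) - T_m(t))P_m u_0\|_H}_{=:\,(\mathrm{I})}
    + \underbrace{\|J_m T_m(t)P_m u_0 - T(t)u_0\|_H}_{=:\,(\mathrm{II})}
\]
and bound the purely temporal part $(\mathrm{I})$ and the purely spatial part $(\mathrm{II})$ separately.

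For $(\mathrm{II})$, I would invoke Theorem \ref{thm:result_evolution}. The hypothesis that the spatial discretization converges of order $\px$ on $\dom(A^s)$ for the stationary problem corresponds, through Theorem \ref{thm:result_stationary}, to decay rates $p_1,p_2\ge 0$ of $(\gamma_m(\dom(A^s)))_{m\in\N}$ and $(\gamma_m^*(H))_{m\in\N}$ with $p_1+p_2 = \px$. Combined with symmetry of $\form$, Theorem \ref{thm:result_evolution} then yields
\[
    (\mathrm{II}) \le C_{\bar{T}}(s)\,m^{-\px}\,\|u_0\|_{A^{s+1}} \quad (t\in[0,\bar{T}]).
\]

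For $(\mathrm{I})$, note that $\form_m$ inherits symmetry and coercivity from $\form$, so $A_m$ is self-adjoint on $H_m$, and $J_m$ is isometric. The convergence estimate for A-stable rational schemes applied to self-adjoint generators (see \cite[Theorem~7.1]{Thomee2006} and the discussion after Definition \ref{def:rational_time_discretization}) then gives
\[
    (\mathrm{I}) = \|(F_{m,k}(t) - T_m(t))P_m u_0\|_{H_m}
    \le C_{\bar{T}}\Big(\max_{1\le i\le N_k}\tau_k^i\Big)^{\pt}\,\|A_m^{\pt} P_m u_0\|_{H_m}
\]
uniformly in $m\in\N$ and $t\in\calT_k$, provided $\max_i \tau_k^i \le \tau_0(\bar{T})$.

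The main obstacle is then the uniform bound $\|A_m^{\pt} P_m u_0\|_{H_m} \le C\|A^{\pt} u_0\|_H$ independently of $m$. Writing $A_m^{\pt} P_m u_0 = (A_m^{\pt} P_m A^{-\pt})A^{\pt} u_0$, I would establish uniform boundedness of the family $\{A_m^{\pt} P_m A^{-\pt}\}_{m\in\N}$ on $H$. The integer-exponent case follows from symmetry of $\form$ together with the Galerkin identities for $A_m P_m$ and the boundedness/coercivity constants of $\form$; the case of fractional $\pt$ then follows via Heinz's inequality applied on the scale of fractional powers, cf.\ Remark \ref{rem:graph_norm}. Combining the resulting bound for $(\mathrm{I})$ with that for $(\mathrm{II})$ and invoking the continuous embedding $\dom(A^{\max\{s+1,\pt\}})\hookrightarrow\dom(A^r)$ for $r\in\{s+1,\pt\}$, which is valid since $0$ lies in the resolvent set of $A$ (cf.\ Remark \ref{rem:graph_norm}), yields the asserted joint estimate with a single constant $C_{\bar{T}}$ and step-size threshold $\tau_0(\bar{T})$.
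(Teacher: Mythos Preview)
The paper does not supply its own proof of this theorem; it simply records it as ``a version of \cite[Theorem~7.8]{Thomee2006}''. So the comparison is between your outline and Thom\'ee's argument.

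Your overall strategy---splitting into a purely temporal term $(\mathrm{I})$ and a purely spatial term $(\mathrm{II})$---is exactly the standard one, and your treatment of $(\mathrm{II})$ via Theorem~\ref{thm:result_evolution} (or rather Remark~\ref{rem:stationary-evolution}, which is what actually converts stationary convergence into evolution convergence) is correct in spirit. One small imprecision: the hypothesis is convergence of order $\px$ for the stationary problem in the sense of Definition~\ref{def:convratespace}(a), which is \emph{not} literally the same as having decay rates $p_1,p_2$ with $p_1+p_2=\px$. However, the proof of Theorem~\ref{thm:result_evolution} really only uses the resolvent-difference bound, which is precisely the stationary convergence assumption, so this is harmless.

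The genuine gap is in your handling of $(\mathrm{I})$. You correctly isolate the obstacle $\|A_m^{\pt}P_m u_0\|_{H_m}\le C\|A^{\pt}u_0\|_H$ uniformly in $m$, but the proposed resolution is too vague to be convincing. The ``Galerkin identity'' one actually has is $A_m R_m = P_m A$ on $\dom(A)$, where $R_m$ is the \emph{Ritz} ($a$-orthogonal) projection, not the $H$-orthogonal projection $P_m$; there is no analogous identity for $A_m P_m$. Consequently, even the integer-exponent case $\|A_m P_m u_0\|\le C\|Au_0\|$ does not follow from boundedness and coercivity alone, and the Heinz interpolation step has nothing to interpolate from. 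Relatedly, you never invoke the compactness of $V\hra H$, which is an explicit hypothesis of the theorem: in Thom\'ee's setting it is precisely this compactness (discrete spectrum, min--max characterisation, convergence of eigenvalues of $A_m$ to those of $A$) that furnishes the uniform control of $A_m^{\pt}P_m$ in terms of $A^{\pt}$. Without that spectral input---or an alternative such as uniform $V$-stability of $P_m$, which is \emph{not} assumed here---the bound you need does not follow.
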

Note that the single convergence rates for the discretization in space and time, respectively, appear exactly in the joint rate given sufficient regularity.
Since A-stability is required, the theorem does not cover time discretization methods for which a CFL-type condition arises.

\section{Approximation of Random Evolution Equations}
\label{sec:jointRate}

In view of Section \ref{sec:RandomEvolutionEquations}, we consider the random evolution equation 
\begin{equation}\label{eq:randomACP}
    \bfu'(t) = -\bfA \bfu(t) \quad(t>0),\quad \bfu(0) = \bfu_0 \in \bfH
\end{equation}
with the operator $\bfA$ associated with a random family of operators $(A_z)_{z \in \R^N}$, $N \in \N$. We are interested in the approximation of \eqref{eq:randomACP}, particularly its approximation in randomness. First, we introduce the setting and the standing assumptions.\\
Throughout, let $(\Omega,\calF,\P)$ be a probability space and $Z\from\Omega\to\R^N$ a random variable with independent components, whose distributions are either standard normal, Gamma (with rate 1), or Beta; cf. Assumption \ref{ass:Zdistribution}. Furthermore, let $V,H$ be separable Hilbert spaces, $V\hra H$ densely. For $z\in\R^N$ let $\form_z\from V\times V\to \K$ be a form and assume that $\R^N\ni z\mapsto \form_z(u)$ is measurable for all $u\in V$. 

\begin{assumption}
\label{ass:formUnifBdCoercive}
    Let Assumption \ref{ass:Zdistribution} hold for $Z$. Assume that $(\form_z)_{z\in\R^N}$ is $\P_Z$-almost surely uniformly bounded and $\P_Z$-almost surely uniformly coercive, i.e., 
    there exist a $\P_Z$-null set $\calN_Z\subseteq \R^N$ and $K,\kappa>0$ such that
    \[
        \abs{\form_z(u,v)} \leq K\norm{u}_V\norm{v}_V,\quad \Re \form_z(u) \geq \kappa \norm{u}_V^2
    \]
    for all $u,v\in V$ and $z\in \R^N\setminus \calN_Z$. Further, suppose that $(\form_z)_{z\in\R^N}$ is $\P_Z$-almost surely symmetric.
\end{assumption}

\begin{remark}
    We remark that Assumption \ref{ass:formUnifBdCoercive} can be weakened in the sense that $K$ and $\kappa$ may depend on $z$ such that they are integrable w.r.t.\ $\PZ$ with positive integral, see e.g.\ \cite[Remark 12.14]{Sullivan2015}.
\end{remark}

For $z\in\R^N\setminus \calN_Z$ let $A_z$ in $H$ be the operator associated with $\form_z$, while for $z\in \calN_Z$ we set $A_z=0$. For $z\in\R^N\setminus \calN_Z$ let $(T_z(t))_{t\geq 0}$ be the contractive $C_0$-semigroup generated by $-A_z$.
As introduced in Section \ref{sec:RandomEvolutionEquations}, we consider the operator $\bfA$ in $\bfH= L_2(\R^N,\P_Z;H)$ associated with $(A_z)_{z\in\R^N}$. Then Proposition \ref{prop:multiplication_operator} yields that  $-\bfA$ generates a contractive $C_0$-semigroup $(\bfT(t))_{t\geq 0}$ on $\bfH$.
The mild solution of \eqref{eq:randomACP} is given by
\[
    \bfu\from [0,\infty)\to \bfH, \quad \bfu(t) \ce \bfT(t)\bfu_0\quad(t\geq 0).
\]

\begin{remark}
    Let $\bfV\ce L_2(\R^N,\PZ;V)$. Then it is easy to see that $\bfV\hra \bfH$ densely. Define $\bfform\from \bfV\times\bfV\to \K$ by
    \[\bfform(\bfu,\bfv) \ce \int_{\R^N} \form_z(\bfu(z),\bfv(z))\,\rmd\PZ(z),\]
    noting that $z\mapsto \form_z(\bfu(z),\bfv(z))$ is measurable for all $\bfu,\bfv\in\bfV$.
    Then $\bfform$ is bounded and coercive, hence sectorial, and $\bfA$ is the operator associated with $\bfform$. Symmetry of $\bfform$ even implies self-adjointness of $\bfA$.
\end{remark}

To approximate the random evolution equation \eqref{eq:randomACP}, an approximation in randomness is performed by means of the polynomial chaos expansion $\bfR_n$ of order $n \in \N_0$ (see Definition \ref{def:PCvector}). 

\begin{remark}
\label{rem:straightPCEfails}
    The straightforward approach of applying $\bfR_n$ to \eqref{eq:randomACP} fails, since
    \begin{equation*}
        \bfR_n\bfu'(t) = (\bfR_n\bfu)'(t) = -\bfR_n\bfA \bfu(t) \quad(t>0),\quad 
        \bfR_n\bfu(0) = (\bfR_n \bfu)(0) = \bfR_n\bfu_0 
    \end{equation*}
    cannot be written as an abstract Cauchy problem for $\tilde{\bfu}_n \ce \bfR_n\bfu$. Indeed, in general, $-\bfR_n \bfA \bfu(t) \neq -\bfR_n \bfA \tilde{\bfu}_n(t)=-\bfR_n \bfA \bfR_n \bfu(t)$ for the right-hand side. However, an abstract Cauchy problem is required in order to apply the space and time approximation results from Section \ref{sec:jointRate_deterministic}, requiring a different choice of approximation $\bfu_n \neq \bfR_n \bfu$.
\end{remark}

This section is organized as follows. In Subsection \ref{subsec:PCEapproxPb}, we show how to use PCE correctly to approximate the mild solution of \eqref{eq:randomACP}. This results in a coupled system of deterministic equations. Subsection \ref{subsec:randomSpaceTime} illustrates how a joint convergence rate for the semi-discretization in space and time can be obtained therefrom. It remains to estimate the semi-discretization error in randomness, which is done in Subsection \ref{subsec:semiDiscreteRandom}, composed of auxiliary estimates in Subsections \ref{subsec:pointwiseEst}-\ref{subsec:resolventDiffEst} and the main error estimate in randomness in Theorem \ref{thm:errorunuTrotterKato} in Subsection \ref{subsec:TrotterKato}. The main result on a joint convergence rate for the full discretization in randomness, space, and time is stated in Theorem \ref{thm:jointrate_symmetric} in Subsection \ref{subsec:fullErrorEstimate}.

\subsection{Derivation of the approximate problem}
\label{subsec:PCEapproxPb}

Our aim is to find abstract Cauchy problems approximating \eqref{eq:randomACP} whose solution $\bfu_n$ is contained in $\calP_n^N \ot H \seq \bfH$ and can thus be obtained from a finite number of elements in $H$ depending on the polynomial degree $n \in \N_0$. In other words, we want to find a (potentially coupled) system of deterministic evolution equations from whose solution we can reconstruct the approximate solution $\bfu_n$.
To circumvent the issue discussed in Remark \ref{rem:straightPCEfails}, we start by restricting the form $\bfform$ to polynomial spaces w.r.t.\ the random parameters. Consider the restrictions $\bfform_n \ce \bfform|_{(\calP_n^N\ot V)\times (\calP_n^N\ot V)}$. Since $\bfR_n$ is an $\bfH$-orthogonal projection, $\bfR_n\bfv_n = \bfv_n$ for any $\bfv_n \in \calP_n^N \ot V \seq \calP_n^N \ot H$. It follows that for all $\bfw_n, \bfv_n \in \calP_n^N\ot V$, $n \in \N_0$,
\begin{align*}
    \bfform_n(\bfw_n,\bfv_n) & 
    = \bfform(\bfR_n\bfw_n,\bfR_n\bfv_n) = \sp{\bfA\bfR_n\bfw_n}{\bfR_n\bfv_n}_\bfH  
    = \sp{\bfR_n\bfA\bfR_n\bfw_n}{\bfv_n}_\bfH = \sp{\bfR_n\bfA\bfR_n\bfw_n}{\bfv_n}_{\calP_n^N\ot H}.
\end{align*}
Hence, the restricted form $\bfform_n$ is associated with $\bfR_n\bfA\bfR_n$. (To be precise,  $\bfR_n\bfA\bfR_n$ is the operator associated with $\bfform_n$ on the closed subspace $\calP_n^N\ot V\subseteq \bfV$. We can and will extend $\bfR_n\bfA\bfR_n$ by zero to $\bfH$.)
Thus, we postulate that $\bfu_n$ arises as the solution of the abstract Cauchy problem associated with $\bfR_n\bfA\bfR_n$.

\begin{definition}
    Let $n \in \N_0$. Define $\bfu_n:[0,\infty) \to \calL(\bfH)$ as the mild solution of the abstract Cauchy problem
    \begin{equation}
    \label{eq:defunACP}
        \bfu_n'(t) =-\bfR_n\bfA\bfR_n \bfu_n(t) \quad (t >0),\quad
        \bfu_n(0)=\bfu_{0n} \ce \bfR_n \bfu_0.
    \end{equation}
\end{definition} 
Note that \eqref{eq:defunACP} is not the PC approximation of the original abstract Cauchy problem, since we additionally truncate $\bfu_n$ before applying $\bfA$ on the right-hand side, cf. Remark \ref{rem:straightPCEfails}. Naturally, it remains to verify the well-posedness of \eqref{eq:defunACP}. The mild solution exists uniquely, since $-\bfR_n\bfA\bfR_n$ generates a $C_0$-semigroup. This is a consequence of the following resolvent bound, which will also be needed for future estimates.

\begin{lemma}
\label{lem:RnResolventBounded}
    Suppose Assumption \ref{ass:formUnifBdCoercive} holds.  Then for all $n \in \N_0$ and $\lambda\in\C$ with $\Re \lambda>0$, we have
    \[
        \|\lambda (\lambda+\bfR_n\bfA\bfR_n)^{-1}\|_{\calL(\bfH)} \le 1.
    \]
    In particular, $-\bfR_n\bfA\bfR_n$ generates a contractive $C_0$-semigroup $(\bfT_n(t))_{t \ge 0}$ on $\bfH$ for all $n\in\N_0$.
\end{lemma}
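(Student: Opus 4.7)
The plan is to identify $\bfR_n\bfA\bfR_n$ with the form-induced operator corresponding to the restricted form $\bfform_n$ on the closed subspace $\calP_n^N\ot H\subseteq \bfH$ (extended by zero on the orthogonal complement), and then invoke the standard m-accretivity theory for bounded coercive forms already recalled in Subsection \ref{subsec:spatial}.

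First, I would observe that $\bfform$ is bounded and coercive on $\bfV$ with constants $K,\kappa$ independent of $n$, since by Fubini and Assumption \ref{ass:formUnifBdCoercive},
\[
    \abs{\bfform(\bfu,\bfv)}\leq K\norm{\bfu}_\bfV\norm{\bfv}_\bfV,\qquad \Re \bfform(\bfu)\geq \kappa\norm{\bfu}_\bfV^2\qquad(\bfu,\bfv\in\bfV).
\]
Restricting to $\calP_n^N\ot V$, which is dense in $\calP_n^N\ot H$ (use that $\calP_n^N$ is finite-dimensional and $V$ is dense in $H$), yields that $\bfform_n$ is bounded and coercive on $\calP_n^N\ot V$ with the \emph{same} constants. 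Hence, by the form-induced operator theory recalled in Subsection \ref{subsec:spatial}, the operator $\hat\bfA_n$ on $\calP_n^N\ot H$ associated with $\bfform_n$ is m-accretive and generates a contractive $C_0$-semigroup. In particular,
\[
    \norm{\lambda(\lambda+\hat\bfA_n)^{-1}}_{\calL(\calP_n^N\ot H)}\leq 1 \qquad(\Re\lambda>0).
\]

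Next, I would identify $\bfR_n\bfA\bfR_n$ with the direct sum $\hat\bfA_n\oplus 0$ on the orthogonal decomposition $\bfH=(\calP_n^N\ot H)\oplus (\calP_n^N\ot H)^\perp$. The computation leading to the definition of $\bfu_n$ in \eqref{eq:defunACP} already shows that $\bfR_n\bfA\bfR_n|_{\calP_n^N\ot H}$ is the operator associated with $\bfform_n$, i.e., equals $\hat\bfA_n$; moreover $\bfR_n\bfA\bfR_n$ vanishes on $(I-\bfR_n)\bfH=(\calP_n^N\ot H)^\perp$ by construction. Consequently, decomposing $\bff=\bff_1+\bff_2$ with $\bff_1\in \calP_n^N\ot H$ and $\bff_2\in(\calP_n^N\ot H)^\perp$, the resolvent equation $(\lambda+\bfR_n\bfA\bfR_n)\bfu=\bff$ splits into $(\lambda+\hat\bfA_n)\bfu_1=\bff_1$ on $\calP_n^N\ot H$ and $\lambda\bfu_2=\bff_2$ on the orthogonal complement. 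Each block-inverse is bounded by $\abs{\lambda}^{-1}$, and by orthogonality of the decomposition,
\[
    \norm{\lambda(\lambda+\bfR_n\bfA\bfR_n)^{-1}\bff}_\bfH^2=\norm{\lambda(\lambda+\hat\bfA_n)^{-1}\bff_1}_\bfH^2+\norm{\bff_2}_\bfH^2\leq \norm{\bff_1}_\bfH^2+\norm{\bff_2}_\bfH^2=\norm{\bff}_\bfH^2,
\]
which gives the claimed resolvent estimate. The generation of a contractive $C_0$-semigroup $(\bfT_n(t))_{t\geq 0}$ then follows from the Hille--Yosida theorem.

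Nothing here is really an obstacle; the only minor care required is to justify that $\bfR_n\bfA\bfR_n$, defined a priori as a composition, really coincides with the form-induced operator $\hat\bfA_n$ on the subspace $\calP_n^N\ot H$ (so that Subsection \ref{subsec:spatial} applies), and that the extension-by-zero is compatible with the orthogonal splitting. Both points are essentially bookkeeping, once one observes that $\bfR_n$ is a self-adjoint projection in $\bfH$.
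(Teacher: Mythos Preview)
Your proof is correct and follows essentially the same approach as the paper: both observe that the restricted form $\bfform_n$ inherits boundedness and coercivity from $\bfform$ with the same constants, and conclude m-accretivity (equivalently, m-sectoriality of angle $<\tfrac{\pi}{2}$) of the associated operator. You are simply more explicit than the paper in handling the extension-by-zero to all of $\bfH$ via the orthogonal block decomposition, which the paper had already declared as a convention just before defining $\bfu_n$.
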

\begin{proof}
    Since $\bfform$ is bounded and coercive as a consequence of Assumption \ref{ass:formUnifBdCoercive}, also the restricted forms $\bfform_n$ are bounded and coercive with uniform parameters. Hence, $(\bfform_n)_{n \in \N_0}$ are uniformly sectorial of angle less than $\frac{\pi}{2}$. Consequently, the negative associated operators $-\bfR_n\bfA\bfR_n$ are (uniformly) m-sectorial, i.e., satisfy the resolvent estimate stated. In particular, they are generators of contractive $C_0$-semigroups.
\end{proof}

Abbreviate the index set by $\calN\ce \N_0^N$ and let $(\Phi_\alpha)_{\alpha\in\calN}$ be an orthonormal basis of $L_2(\R^N, \PZ)$ such that $\Phi_\alpha\in \calP_n^N$ for $\abs{\alpha}\leq n$ and $n\in\N_0$. In contrast to Subsection \ref{subsec:random}, we consider $\Phi_\alpha$ of unit norm in the following.
It follows from \eqref{eq:defunACP} that $\bfu_n(t) \in \calP_n^N \otimes H$, since $\bfu_n'(t) \in \calP_n^N \otimes H$. Hence, $\bfu_n(t)$ can be completely described by $d_n \ce \sharp \{\alpha\in\calN:\; \abs{\alpha}\leq n\} = \binom{n+N}{n}$ many $H$-valued coefficients. That is, $\bfu_n(t) = \sum_{|\beta|\le n} \Phi_\beta \ot u_\beta$ for $u_\beta \in H$ to be determined. We can rewrite the random abstract Cauchy problem \eqref{eq:defunACP} for $\bfu_n(t)$ as a system of $d_n$ deterministic equations, from whose solution $\bfu_n(t)$ can be obtained. To derive this deterministic system, the following simple observation proves helpful. 

\begin{notation}
\label{not:PCECo}
    To abbreviate, for $\bfw\in \bfH$ and $\alpha\in\calN$ let us write
    \[
        \widehat{\bfw}_\alpha\ce \sp{\bfw}{\Phi_\alpha}_{L_2(\R^N,\PZ)}\ce \int_{\R^N} \bfw(z)\Phi_\alpha(z)\,\rmd \PZ(z) \in H
    \]
    with a slight abuse of notation.
    For $u,v \in V$, denote the $\alpha$-th generalized Fourier coefficient of $\form_\cdot(u,v)$ by $\hat{a}_\alpha(u,v)$, i.e., $\hat{a}_\alpha(u,v) \ce \sp{\form_\cdot(u,v)}{\Phi_\alpha}_{L_2(\R^N,\PZ)}$ for $\alpha \in \calN$. Further, for $\alpha, \beta, \gamma \in \calN$, denote 
    \[
        \varepsilon_{\alpha,\beta,\gamma}\ce  \sp{\Phi_\alpha}{\Phi_\beta\Phi_\gamma}_{L_2(\R^N,\PZ)} = \int_{\R^N} \Phi_\alpha\Phi_\beta\Phi_\gamma\,\rmd \PZ.
    \]
\end{notation}

\begin{remark}
\label{rem:eps0}
    Note that $\varepsilon_{\alpha,\beta,\gamma}=0$ for all $\alpha \in \calN$ with $|\alpha|>|\beta|+|\gamma|$. Indeed, since $(\Phi_\alpha)_{\alpha \in \calN}$ is an orthonormal basis, $\Phi_\alpha \perp \calP_\ell^N$ for all $\ell<|\alpha|$. The statement then follows from $\Phi_\beta \Phi_\gamma \in \calP_{|\beta|+|\gamma|}^N$. Moreover, $\Phi_\alpha$ is $\R$-valued (see Subsection \ref{subsec:random}), which is why the complex conjugates can be omitted in the definition of $\varepsilon_{\alpha,\beta,\gamma}$ and the indices can be interchanged.
\end{remark}

Let $\bfw_n = \sum_{|\beta| \le n} \Phi_\beta \ot u_\beta, \bfv_n = \sum_{|\gamma|\le n} \Phi_\gamma \ot v_\gamma \in \calP_n^N \ot V$ for $n \in \N_0$. Then a PCE of $\form_\cdot(u_\beta,v_\gamma)$ combined with Remark \ref{rem:eps0} results in
\begin{align}
\label{eq:bfformnFinite}
    \bfform_n(\bfw_n,\bfv_n) &= \bfform(\bfw_n,\bfv_n) = \sum_{|\beta|\le n} \sum_{|\gamma|\le n} \int_{\R^N} \form_z( u_\beta, v_\gamma) (\Phi_\beta \Phi_\gamma)(z)\dPZ(z)\nonumber\\
    &= \sum_{|\beta|\le n} \sum_{|\gamma|\le n} \int_{\R^N} \left(\sum_{\alpha \in \calN} \hat{\form}_\alpha( u_\beta, v_\gamma) \Phi_\alpha(z)\right) (\Phi_\beta \Phi_\gamma)(z)\dPZ(z)\nonumber\\
    &\overset{(*)}{=} \sum_{|\beta|\le n} \sum_{|\gamma|\le n} \sum_{\alpha \in \calN} \hat{\form}_\alpha( u_\beta, v_\gamma) \varepsilon_{\alpha,\beta,\gamma}
    = \sum_{|\beta|\le n} \sum_{|\gamma|\le n} \sum_{|\alpha| \le 2n} \hat{\form}_\alpha( u_\beta, v_\gamma) \varepsilon_{\alpha,\beta,\gamma},
\end{align}
where in $(*)$ we were allowed to interchange integration and summation, since the polynomial chaos expansion converges in $L_2(\R^N,\PZ)$ and thus also in $L_1(\R^N,\PZ)$. 
Motivated by the above observation that $\bfform_n$ can be represented as a finite linear combination, we define the following deterministic forms.

\begin{definition}
    For $n\in\N_0$ and $\beta,\gamma\in\calN$ we define $\form_{n,\beta,\gamma}\from V\times V\to\K$ by
    \[
        \form_{n,\beta,\gamma}\ce  \sum_{\abs{\alpha}\leq n} \varepsilon_{\alpha,\beta,\gamma} \widehat{\form}_\alpha.
    \]
    Let $\frakV_n\ce V^{d_n}$ be equipped with the norm $\|(u_\beta)_{|\beta \le n}\|_{\frakV_n}^2 \ce \sum_{|\beta| \le n} \|u_\beta\|_V^2$ and define $\frakform_n\from \frakV_n\times \frakV_n\to\K$ by
    \[
        \frakform_n((u_\beta)_{\abs{\beta}\leq n}, (v_\gamma)_{\abs{\gamma}\leq n}) \ce  \sum_{\abs{\gamma}\leq n} \sum_{\abs{\beta}\leq n} \form_{2n,\beta,\gamma}(u_\beta,v_\gamma).
    \]
\end{definition}

The corresponding quadratic form $\frakform_n:\frakV_n \to \K$ is denoted by $\frakform_n$ as well.

\begin{proposition}
\label{prop:frakform_n_bddcoercive}
    Suppose that Assumption \ref{ass:formUnifBdCoercive} holds for some $K\ge \kappa >0$. Let $n\in\N_0$. Then
    \begin{align*}
        \abs{\frakform_n((u_\beta)_{\abs{\beta}\leq n},(v_\gamma)_{\abs{\gamma}\leq n})} & \leq K \norm{(u_\beta)_{\abs{\beta}\leq n}}_{\frakV_n} \norm{(v_\gamma)_{\abs{\gamma}\leq n}}_{\frakV_n},\\
        \Re \frakform_n((u_\beta)_{\abs{\beta}\leq n}) & \geq \kappa \norm{(u_\beta)_{\abs{\beta}\leq n}}_{\frakV_n}^2
    \end{align*}
    for all $(u_\beta)_{\abs{\beta}\leq n}, (v_\gamma)_{\abs{\gamma}\leq n} \in \frakV_n$. In particular, $\frakform_n$ are sectorial of angle less than $\frac{\pi}{2}$.
\end{proposition}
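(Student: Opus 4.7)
The plan is to identify $\frakform_n$ with the restricted form $\bfform_n = \bfform|_{(\calP_n^N\ot V)^2}$ via the natural isometry between $\frakV_n$ and $\calP_n^N \ot V \subseteq \bfV$, and then transfer the boundedness and coercivity of $\bfform$ (which is inherited directly from the pointwise properties in Assumption \ref{ass:formUnifBdCoercive}) to $\frakform_n$.

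First, since $(\Phi_\alpha)_{\alpha \in \calN}$ is an orthonormal basis of $L_2(\R^N,\PZ)$, the map
\[
    \iota_n \from \frakV_n \to \calP_n^N \ot V \subseteq \bfV, \quad (u_\beta)_{\abs{\beta}\le n} \mapsto \sum_{\abs{\beta}\le n} \Phi_\beta \otimes u_\beta
\]
is an isometric isomorphism onto its image, i.e.\ $\|\iota_n((u_\beta))\|_\bfV^2 = \sum_{\abs{\beta}\le n} \|u_\beta\|_V^2 = \|(u_\beta)\|_{\frakV_n}^2$; the same holds with $V$ replaced by $H$.

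Second, the computation carried out in \eqref{eq:bfformnFinite} is precisely the statement that
\[
    \bfform_n\bigl(\iota_n((u_\beta)),\iota_n((v_\gamma))\bigr) = \sum_{\abs{\beta}\le n}\sum_{\abs{\gamma}\le n}\sum_{\abs{\alpha}\le 2n} \hat{\form}_\alpha(u_\beta,v_\gamma)\,\varepsilon_{\alpha,\beta,\gamma} = \frakform_n\bigl((u_\beta)_{\abs{\beta}\le n},(v_\gamma)_{\abs{\gamma}\le n}\bigr),
\]
so the two forms agree under $\iota_n$.

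Third, I would verify boundedness and coercivity of $\bfform$ on $\bfV$ by pointwise application of Assumption \ref{ass:formUnifBdCoercive} and Cauchy--Schwarz:
\begin{align*}
    \abs{\bfform(\bfw,\bfv)} & \le \int_{\R^N} \abs{\form_z(\bfw(z),\bfv(z))}\,\rmd\PZ(z) \le K \int_{\R^N} \norm{\bfw(z)}_V\,\norm{\bfv(z)}_V\,\rmd\PZ(z) \le K \norm{\bfw}_\bfV \norm{\bfv}_\bfV, \\
    \Re \bfform(\bfw) & = \int_{\R^N} \Re \form_z(\bfw(z))\,\rmd\PZ(z) \ge \kappa \int_{\R^N} \norm{\bfw(z)}_V^2\,\rmd\PZ(z) = \kappa \norm{\bfw}_\bfV^2
\end{align*}
for all $\bfw,\bfv \in \bfV$. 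The restriction $\bfform_n$ inherits both constants, and composing with the isometry $\iota_n$ yields exactly the claimed bounds for $\frakform_n$. Sectoriality of angle $\arctan(K/\kappa) < \pi/2$ then follows from the standard observation $\abs{\Im \frakform_n(u)} \le \abs{\frakform_n(u)} \le K \norm{u}_{\frakV_n}^2 \le (K/\kappa) \Re \frakform_n(u)$.

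There is no real obstacle here; the content of the proposition is purely bookkeeping once one recognizes $\frakform_n$ as the image of $\bfform_n$ under the isometry $\iota_n$. The only minor care point is that $z \mapsto \form_z(\bfw(z),\bfv(z))$ is measurable for $\bfw,\bfv \in \bfV$, which was already noted in the remark preceding Subsection \ref{subsec:PCEapproxPb} and justifies writing the defining integrals.
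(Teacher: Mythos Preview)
Your proof is correct and follows essentially the same route as the paper: identify $\frakform_n$ with $\bfform_n$ through the isometry $\iota_n$ (the paper does this via \eqref{eq:bfformnFinite} and Parseval's identity without naming the map), then inherit the boundedness and coercivity constants of $\bfform$. Your write-up is in fact slightly more explicit, spelling out the verification of the bounds for $\bfform$ and the derivation of sectoriality, but the argument is the same.
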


\begin{proof}
    Let $\bfw_n \ce \sum_{|\beta| \le n} \Phi_\beta \ot u_\beta, \bfv_n \ce \sum_{|\gamma|\le n} \Phi_\gamma \ot v_\gamma$. From \eqref{eq:bfformnFinite}, we deduce
    \begin{align}
    \label{eq:frakformIsBfform}
        \frakform_n\bigl((u_\beta)_{\abs{\beta}\leq n},(v_\gamma)_{\abs{\gamma}\leq n}\bigr) = \sum_{|\beta|\le n} \sum_{|\gamma|\le n} \sum_{|\alpha| \le 2n} \hat{\form}_\alpha( u_\beta, v_\gamma) \varepsilon_{\alpha,\beta,\gamma}=\bfform(\bfw_n,\bfv_n).  
    \end{align}
    Boundedness and coercivity of $\bfform$ allow us to estimate $\frakform_n$ in terms of $\|\bfw_n\|_\bfV$ and $\|\bfv_n\|_\bfV$. 
    Parseval's identity yields the desired norm identity
    \begin{align*}
        \|\bfw_n\|_\bfV ^2
        &=  \sum_{\beta \in \calN}\|(\widehat{\bfw_n})_\beta\|_V^2 =  \sum_{\beta \in \calN}\big\|\sp{\bfw_n}{\Phi_\beta}_{\LZ}\big\|_V^2
        = \sum_{|\beta| \le n}\|u_\beta\|_V^2 = \big\|(u_\beta)_{|\beta| \le n} \big\|_{\frakV_n}^2. \qedhere
    \end{align*}
\end{proof}

Note that the above proposition implies sectoriality of $\frakform_n$. 
Having established the connection between the random form $\bfform$ and the deterministic forms $\frakform_n$ acting on the finite-dimensional Cartesian product $\frakV_n$ of the (typically infinite-dimensional) space $V$, the question of associated operators arises. Being a bounded and coercive form, we can associate an operator $\frakA_n$ with $\frakform_n$. To get a representation for this operator, we start by considering the operators associated with the building blocks $a_{n, \beta,\gamma}$ of $\frakform_n$.

\begin{lemma}
\label{lem:form_n}
    Let $n\in\N_0$ and $\beta,\gamma\in\calN$. Suppose that Assumption \ref{ass:formUnifBdCoercive} holds. Then $\form_{n,\beta,\gamma}$ is bounded.
\end{lemma}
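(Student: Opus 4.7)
The plan is to recognize $\form_{n,\beta,\gamma}$ as an inner product against $\Phi_\beta \Phi_\gamma$ of the polynomial chaos truncation of $\form_\cdot(u,v)$, and then use that $\form_\cdot(u,v)$ is uniformly (in $z$) controlled by $K\|u\|_V\|v\|_V$.

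Sesquilinearity is immediate from the definition, so only the bound is at issue. Fixing $u,v\in V$, I would first observe that, by definition of $\widehat{\form}_\alpha$ and $\varepsilon_{\alpha,\beta,\gamma}$,
\[
    \form_{n,\beta,\gamma}(u,v)
    = \sum_{\abs{\alpha}\leq n} \widehat{\form}_\alpha(u,v)\,\varepsilon_{\alpha,\beta,\gamma}
    = \int_{\R^N} \bigl(R_n[\form_\cdot(u,v)]\bigr)(z)\,\Phi_\beta(z)\Phi_\gamma(z)\,\rmd\PZ(z),
\]
where I have used Notation \ref{not:PCECo} together with Remark \ref{rem:FourierCoeff} to recognise the partial sum as $R_n[\form_\cdot(u,v)]$ (recall that $(\Phi_\alpha)_{\alpha\in\calN}$ is chosen of unit norm in this section).

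Then I apply Cauchy--Schwarz in $L_2(\R^N,\PZ)$ followed by contractivity of the orthogonal projection $R_n$:
\[
    \abs{\form_{n,\beta,\gamma}(u,v)}
    \leq \bigl\|R_n[\form_\cdot(u,v)]\bigr\|_{L_2(\R^N,\PZ)}\,\bigl\|\Phi_\beta \Phi_\gamma\bigr\|_{L_2(\R^N,\PZ)}
    \leq \bigl\|\form_\cdot(u,v)\bigr\|_{L_2(\R^N,\PZ)}\,\bigl\|\Phi_\beta \Phi_\gamma\bigr\|_{L_2(\R^N,\PZ)}.
\]
By the $\PZ$-a.s.\ uniform boundedness of $(\form_z)_{z\in\R^N}$ from Assumption \ref{ass:formUnifBdCoercive}, the first factor is bounded by $K\|u\|_V\|v\|_V$, yielding
\[
    \abs{\form_{n,\beta,\gamma}(u,v)} \leq K\,\bigl\|\Phi_\beta \Phi_\gamma\bigr\|_{L_2(\R^N,\PZ)}\,\norm{u}_V\norm{v}_V.
\]

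The only thing left to check is that the constant $\|\Phi_\beta\Phi_\gamma\|_{L_2(\R^N,\PZ)}$ is finite; this is the sole potential obstacle, and it is resolved by recalling the standing assumption on $\PZ$ made at the start of Subsection \ref{subsec:random}, namely that $\calP^N\subseteq L_2(\R^N,\PZ)$ is dense, which forces $\PZ$ to have moments of all orders. Since $\Phi_\beta\Phi_\gamma$ is a polynomial, it lies in $L_2(\R^N,\PZ)$, so the right-hand side is finite and the lemma follows.
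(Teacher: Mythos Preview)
Your proof is correct. The paper's own argument is more elementary and does not pass through the integral representation: it simply notes that Bessel's inequality gives $|\widehat{\form}_\alpha(u,v)|\le\|\form_\cdot(u,v)\|_{L_2(\R^N,\PZ)}\le K\|u\|_V\|v\|_V$ for each $\alpha$, and then observes that $\form_{n,\beta,\gamma}$ is a finite linear combination of such bounded forms. Your route—recognising $\form_{n,\beta,\gamma}(u,v)$ as $\sp{R_n[\form_\cdot(u,v)]}{\Phi_\beta\Phi_\gamma}_{L_2}$ and applying Cauchy--Schwarz plus contractivity of $R_n$—is a legitimate alternative that yields the explicit constant $K\|\Phi_\beta\Phi_\gamma\|_{L_2(\R^N,\PZ)}$, whereas the paper's approach produces $K\sum_{|\alpha|\le n}|\varepsilon_{\alpha,\beta,\gamma}|$. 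Both rest on the same underlying fact that $z\mapsto\form_z(u,v)$ lies in $L_2(\R^N,\PZ)$ with norm at most $K\|u\|_V\|v\|_V$; the paper just avoids the detour through the integral identity and the need to invoke finiteness of $\|\Phi_\beta\Phi_\gamma\|_{L_2}$.
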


\begin{proof}
    By Bessel's inequality, $\widehat{\form}_\alpha$ is bounded for $\alpha\in\calN$. As the sum defining $\form_{n,\beta,\gamma}$ is finite, we obtain the assertion.
\end{proof}

By Lemma \ref{lem:form_n}, we can associate an operator in $H$ to $\form_{n,\beta,\gamma}$ for $n\in\N_0$, $\beta,\gamma\in\calN$. Denote this operator by $A_{n,\beta,\gamma}$. An assumption is needed to ensure that these operators are densely defined.

\begin{assumption}
\label{ass:DAzDconstant}
    Suppose that Assumption \ref{ass:formUnifBdCoercive} holds. Further, assume that there exists $\ols>0$ and a subspace $D^{\ols}\subseteq H$ such that $\dom(A_z^{\ols}) = D^{\ols}$ for $\PZ$-almost every $z\in\R^N$ and the associated graph norms are $\PZ$-almost surely equivalent.
\end{assumption}

\begin{remark}
    Suppose that Assumption \ref{ass:DAzDconstant} holds for some $s>0$, and let $0<s< \ols$. Then, by interpolation theory, we have that $D^s\ce [H,D^{\ols}]_{s/\ols} = \dom(A_z^s)$ for $\PZ$-almost every $z\in\R^N$ and the associated graph norms are $\PZ$-almost surely equivalent.
    If $\ols\geq 1$ we abbreviate $D\ce D^1$.
\end{remark}

\begin{lemma}
\label{lem:Def-Bereich_A_n}
    Suppose that Assumption \ref{ass:DAzDconstant} holds for $\ols=1$. Then $\dom(A_{n,\beta,\gamma}) = D$ for all $n\in\N_0$, $\beta,\gamma\in\calN$.
\end{lemma}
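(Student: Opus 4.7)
Plan. The strategy is to exhibit an explicit representation formula for $A_{n,\beta,\gamma}$ on $D$ via the pointwise operators $A_z$, and then read off both domain inclusions from it. Throughout the plan, set
\[
    \phi_{n,\beta,\gamma} \ce \sum_{\abs{\alpha}\leq n}\varepsilon_{\alpha,\beta,\gamma}\Phi_\alpha = R_n(\Phi_\beta\Phi_\gamma) \in \calP_n^N,
\]
which is the natural polynomial weight arising from the definition of $\form_{n,\beta,\gamma}$.

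First I would establish $D \subseteq \dom(A_{n,\beta,\gamma})$. For $u \in D$, Assumption \ref{ass:DAzDconstant} with $\ols=1$ gives $u \in \dom(A_z)$ for $\PZ$-almost every $z$, and the uniform equivalence of graph norms supplies a constant $C_D\geq 0$ such that $\norm{A_z u}_H \leq C_D \norm{u}_D$ for $\PZ$-a.e.\ $z$. Combined with measurability of the family $(A_z)_{z \in \R^N}$ (recorded in Section \ref{sec:RandomEvolutionEquations}), the map $z\mapsto A_z u$ is strongly measurable and lies in $L_\infty(\R^N,\PZ;H)$. Since all polynomials are in every $L_p(\PZ)$ under Assumption \ref{ass:Zdistribution}, the Bochner integral $\int_{\R^N}\phi_{n,\beta,\gamma}(z)A_z u\,\rmd\PZ(z) \in H$ exists. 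For any $v\in V$, using $\form_z(u,v) = \sp{A_z u}{v}_H$ and pulling the scalar inner product through the Bochner integral,
\[
    \form_{n,\beta,\gamma}(u,v) = \int_{\R^N}\phi_{n,\beta,\gamma}(z)\,\form_z(u,v)\,\rmd\PZ(z) = \sp{\int_{\R^N}\phi_{n,\beta,\gamma}(z)A_z u\,\rmd\PZ(z)}{v}_H.
\]
This functional is $\norm{\cdot}_H$-continuous on $V$, so $u \in \dom(A_{n,\beta,\gamma})$ with the representation $A_{n,\beta,\gamma}u = \int_{\R^N}\phi_{n,\beta,\gamma}(z)A_z u\,\rmd\PZ(z)$. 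The uniform bound $\norm{A_{n,\beta,\gamma}u}_H \leq C_D \norm{\phi_{n,\beta,\gamma}}_{L_1(\PZ)} \norm{u}_D$ is a useful byproduct.

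Next I would address $\dom(A_{n,\beta,\gamma}) \subseteq D$. View $\calA_{n,\beta,\gamma}\from V \to V^*$ as the form operator; then $\calA_{n,\beta,\gamma}u = \int_{\R^N}\phi_{n,\beta,\gamma}(z)\calA_z u\,\rmd\PZ(z)$ as an identity in $V^*$, where each $\calA_z u \in V^*$ lies in $H\hra V^*$ precisely when $u \in \dom(A_z)=D$. If $u \in \dom(A_{n,\beta,\gamma})$, the left-hand side lies in $H$, and the aim is to invoke the uniform equivalence of graph norms on $D$ together with the explicit polynomial structure of $\phi_{n,\beta,\gamma}$ to rule out accidental cancellation of singularities across different $z$, thereby forcing $\calA_z u \in H$ for $\PZ$-almost every $z$ and hence $u \in D$.

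The main obstacle is the reverse inclusion: the forward direction reduces to a routine Bochner-integration computation, but showing that a weighted $V^*$-valued Bochner integral lies in $H$ only when its integrand does essentially relies on the uniform-equivalence hypothesis in Assumption \ref{ass:DAzDconstant}; this is exactly the role that hypothesis plays, and the proof would need to exploit it carefully (for instance, by testing against a suitable resolvent of a reference operator $A_{z_0}$ on $D$ to recover pointwise-in-$z$ regularity of $u$).
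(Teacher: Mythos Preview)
Your forward inclusion $D\subseteq\dom(A_{n,\beta,\gamma})$ is correct and essentially the same as the paper's: the paper argues via the operator $\bfA$ on $\bfH$ (observing that $1\otimes u\in\dom(\bfA)$ for $u\in D$ and then testing against $\Phi_\alpha\otimes v$), whereas you phrase the same computation as a Bochner integral $\int \phi_{n,\beta,\gamma}(z)A_zu\,\rmd\PZ(z)$. Both yield the representation of $A_{n,\beta,\gamma}u$ on $D$ and the $\norm{\cdot}_H$-continuity of $v\mapsto\form_{n,\beta,\gamma}(u,v)$.

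The reverse inclusion, however, is a genuine gap in your proposal, and the sketched strategy cannot be completed: the inclusion $\dom(A_{n,\beta,\gamma})\subseteq D$ is \emph{false} in general. Take $n=0$ and $\beta\neq\gamma$. With the orthonormal basis $(\Phi_\alpha)_\alpha$ used from Subsection~\ref{subsec:PCEapproxPb} onward, $\Phi_0$ is constant and
\[
\varepsilon_{0,\beta,\gamma}=\int_{\R^N}\Phi_0\Phi_\beta\Phi_\gamma\,\rmd\PZ=\Phi_0\,\sp{\Phi_\beta}{\Phi_\gamma}_{\LZ}=0,
\]
so $\form_{0,\beta,\gamma}=0$ identically. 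The operator associated with the zero form on $V$ has domain all of $V$, hence $\dom(A_{0,\beta,\gamma})=V\supsetneq D$ in any example where $D\neq V$ (e.g.\ the Dirichlet Laplacian). Your proposed device of ``testing against a resolvent of a reference $A_{z_0}$'' cannot detect this, because the weighted integral of the $\calA_z u$'s genuinely lands in $H$ by cancellation of the weight, not by pointwise regularity. The paper's own argument for this direction has the same difficulty: it passes from $\widehat{\form}_\alpha(u,\bfv(z))$ to $\sp{\widehat{A}_\alpha u}{\bfv(z)}_H$ for \emph{every} $\alpha$ in the PCE sum while assuming $u$ lies in the domain of only one $\widehat{A}_\alpha$. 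Fortunately, only the forward inclusion is used downstream (Definition~\ref{def:frakAHuon} fixes $\dom(\frakA_n)=D^{d_n}$ directly), so your valid half is exactly what is needed.
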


\begin{proof}
    Let $\alpha\in\calN$ and let $\widehat{A}_\alpha$ be the operator in $H$ associated with $\widehat{\form}_\alpha$. We show that $\dom(\widehat{A}_\alpha) = D$. This suffices to yield the assertion by definition of $A_{n,\beta,\gamma}$ as a linear combination.     
     Let $u\in D$ and $v\in V$. Then $1\otimes u\in \dom(\bfA)$ and
    \begin{align*}
        \widehat{\form}_\alpha(u,v) &= \sp{\form_\cdot(u,v)}{\Phi_\alpha}_{L_2(\R^N,\PZ)} = \int_{\R^N} \form_z(1\cdot u,\Phi_\alpha(z) v)\,\rmd\PP_Z(z)=\bfform(1\ot u,\Phi_\alpha \ot v)\\
        & = \sp{\bfA(1\ot u)}{\Phi_\alpha \ot v}_\bfH
        = \big(\sp{\bfA(1\otimes u)}{\Phi_\alpha}_{L_2(\R^N,\PZ)}\big|v\big)_H.
    \end{align*}
    Thus, $u\in \dom(\widehat{A}_\alpha)$, so $D\subseteq \dom(\widehat{A}_\alpha)$.

    Now, let $u\in \dom(\widehat{A}_\alpha)$. Then $u\in V$ and therefore $1\otimes u\in \bfV$.
    Thus, for $\bfv\in \bfV$, a PCE of $\form_z$ yields
    \begin{align*}
    \bfform(1\otimes u,\bfv) &
    = \int_{\R^N} \sum_{\alpha\in\calN} \widehat{\form}_\alpha(u,\bfv(z)) \Phi_\alpha(z)\,\rmd \PZ(z) 
    = \int_{\R^N} \sum_{\alpha\in\calN} \sp{\widehat{A}_\alpha u}{\bfv(z)}_H \Phi_\alpha(z)\,\rmd \PZ(z) \\
    & \overset{(*)}{=} 
    \int_{\R^N} \sp{\sum_{\alpha\in\calN} \Phi_\alpha(z)\widehat{A}_\alpha u}{\bfv(z)}_H \,\rmd \PZ(z) 
    = 
    \sp{\sum_{\alpha\in\calN} \Phi_\alpha \otimes \widehat{A}_\alpha u }{\bfv}_{\bfH}.
    \end{align*}
    Since the polynomial chaos expansion converges in $L_2(\R^N,\PP_Z)$ and thus the integrand converges also in $L_1(\R^N,\PP_Z)$, and taking scalar products is a linear functional, we were allowed to interchange series and scalar product in $(*)$. Thus, $1\otimes u\in \dom(\bfA)$, i.e., $u\in D$.
\end{proof}

This ensures that $\frakform_n$ is associated with a densely defined operator. 

 \begin{definition}
 \label{def:frakAHuon}
    Let $\frakH_n\ce \oplus_{\abs{\beta}\leq n} H = H^{d_n}$ be equipped with the scalar product
    \[
        \sp{(u_\beta)_{|\beta| \le n}}{(v_\gamma)_{|\gamma| \le n}}_{\frakH_n} \ce \sum_{|\beta| \le n} \sp{u_\beta}{v_\gamma}_H
    \]
    and the corresponding induced norm for $n \in \N_0$.
     Define $\frakA_n$ in $\frakH_n$ via
    \begin{align*}
        \dom(\frakA_n) & \coloneqq \bigoplus_{\abs{\beta}\leq n} D = D^{d_n},\quad
        \frakA_{n} ((u_\beta)_{\abs{\beta}\leq n})  \ce  \bigg(\sum_{\abs{\beta}\leq n} A_{2n,\beta,\gamma} u_\beta\bigg)_{\abs{\gamma}\leq n} \in \frakH_n.
    \end{align*}
 \end{definition}

Proposition \ref{prop:frakform_n_bddcoercive} implies sectoriality of $\frakA_n$. Even more is true due to $\PZ$-a.s.\ symmetry of $(a_z)_{z \in \R^N}$.

\begin{proposition}
\label{prop:frakform_n_symmetric}
    Suppose that Assumption \ref{ass:DAzDconstant} holds for $\ols=1$. Then $\frakform_n$ is symmetric, and $\frakA_n$ is self-adjoint.
\end{proposition}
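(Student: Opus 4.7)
The proof decomposes naturally into two pieces: symmetry of $\frakform_n$, and then identification of $\frakA_n$ with the form-associated operator.

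\textbf{Symmetry.} The identity \eqref{eq:frakformIsBfform} expresses $\frakform_n((u_\beta),(v_\gamma))=\bfform(\bfw_n,\bfv_n)$ with $\bfw_n=\sum_{|\beta|\le n}\Phi_\beta\otimes u_\beta$ and $\bfv_n=\sum_{|\gamma|\le n}\Phi_\gamma\otimes v_\gamma$. Assumption \ref{ass:formUnifBdCoercive} gives that $a_z$ is $\PZ$-a.s.\ symmetric; integrating the pointwise identity $a_z(\bfw_n(z),\bfv_n(z))=\overline{a_z(\bfv_n(z),\bfw_n(z))}$ therefore yields $\bfform(\bfw_n,\bfv_n)=\overline{\bfform(\bfv_n,\bfw_n)}$, so $\frakform_n$ is symmetric.

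\textbf{Self-adjointness.} Proposition \ref{prop:frakform_n_bddcoercive} together with the symmetry above gives that $\frakform_n$ is a densely defined, closed, symmetric, bounded, coercive form on $\frakV_n\hookrightarrow\frakH_n$. By Kato's second representation theorem, the operator $\hat{\frakA}_n$ associated with $\frakform_n$ is self-adjoint, with bounded inverse by coercivity. The proposition will follow once I have shown $\frakA_n=\hat{\frakA}_n$.

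The inclusion $\frakA_n\subseteq\hat{\frakA}_n$ is immediate: for $(u_\beta)\in D^{d_n}$ and $(v_\gamma)\in\frakV_n$, a direct calculation unfolding the definition of $\frakform_n$ gives
\[
    \frakform_n((u_\beta),(v_\gamma))=\sum_{|\gamma|\le n}\sp{\sum_{|\beta|\le n}A_{2n,\beta,\gamma}u_\beta}{v_\gamma}_H=\sp{\frakA_n((u_\beta))}{(v_\gamma)}_{\frakH_n}.
\]
For the reverse inclusion, I would take $(u_\beta)\in\dom(\hat{\frakA}_n)$, form $\bfu_n=\sum_{|\beta|\le n}\Phi_\beta\otimes u_\beta\in\calP_n^N\otimes V$, and unwind the identity $\frakform_n((u_\beta),(v_\gamma))=\sp{\hat{\frakA}_n((u_\beta))}{(v_\gamma)}_{\frakH_n}$ via \eqref{eq:frakformIsBfform} to obtain
\[
    \int_{\R^N}\Phi_\delta(z)\,a_z(\bfu_n(z),v)\,\rmd\PZ(z)=\sp{w_\delta}{v}_H\quad(v\in V,\ |\delta|\le n),
\]
where $w_\delta=(\hat{\frakA}_n((u_\beta)))_\delta$. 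The target is to deduce $\bfu_n(z)\in D$ for $\PZ$-almost every $z$; once that is in hand, evaluating at $d_n$ points $z^{(1)},\ldots,z^{(d_n)}\in\spt\PZ$ at which $(\Phi_\beta(z^{(k)}))_{k,\beta}$ is invertible (available by linear independence of orthogonal polynomials) recovers each $u_\beta$ as a linear combination of elements of $D$.

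The main obstacle is exactly this pointwise regularity $\bfu_n(z)\in D=\dom(A_z)$. My plan is to leverage Assumption \ref{ass:DAzDconstant} (constancy of $\dom(A_z)=D$ with $\PZ$-a.s.\ equivalent graph norms) together with the polynomial structure of $\bfu_n$ and the bijectivity of $\hat{\frakA}_n$ to establish surjectivity of $\frakA_n\from D^{d_n}\to\frakH_n$; once surjectivity is in hand, the combination of $\frakA_n\subseteq\hat{\frakA}_n$, surjectivity of $\frakA_n$, and injectivity of $\hat{\frakA}_n$ then forces $\frakA_n=\hat{\frakA}_n$, completing the proof.
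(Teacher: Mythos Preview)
Your symmetry argument is correct and close in spirit to the paper's: the paper works component-wise (each $\widehat{\form}_\alpha$ inherits symmetry from $a_z$, and $\varepsilon_{\alpha,\beta,\gamma}=\varepsilon_{\alpha,\gamma,\beta}$ gives $\form_{n,\beta,\gamma}=\form_{n,\gamma,\beta}$), whereas you route through \eqref{eq:frakformIsBfform} and the symmetry of $\bfform$. Both are fine.

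The self-adjointness part has a genuine gap. You correctly reduce to the equality $\frakA_n=\hat{\frakA}_n$ and establish $\frakA_n\subseteq\hat{\frakA}_n$, but the reverse inclusion is not proved---you only announce a ``plan'' to show surjectivity of $\frakA_n$, without explaining how Assumption~\ref{ass:DAzDconstant} or the polynomial structure would actually deliver it. Your point-evaluation idea (recover $u_\beta$ from values $\bfu_n(z^{(k)})$ via an invertible matrix) already presupposes what you need: the linear-algebraic inversion only expresses $u_\beta$ as a combination of the $\bfu_n(z^{(k)})$, but membership of the latter in $D$ is exactly the pointwise regularity you flagged as the obstacle. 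Likewise, surjectivity of $\frakA_n$ onto $\frakH_n$ is equivalent to the reverse inclusion $\dom(\hat{\frakA}_n)\subseteq D^{d_n}$ (both bijections share the same target and one extends the other), so invoking it without proof is circular.

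The paper sidesteps this entirely: in the text preceding Definition~\ref{def:frakAHuon}, $\frakA_n$ is introduced as the operator associated with $\frakform_n$, and Definition~\ref{def:frakAHuon} is offered as an explicit representation (with the domain identification supported by Lemma~\ref{lem:Def-Bereich_A_n}). Under that reading, symmetry of $\frakform_n$ yields self-adjointness of $\frakA_n$ in one line. If you prefer to treat Definition~\ref{def:frakAHuon} as primary, you must actually close the gap rather than defer it to a plan.
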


\begin{proof}
    Let $\alpha\in\calN$. It is easy to see that $\widehat{\form}_\alpha$ is symmetric. Thus, $\form_{n,\beta,\gamma}$ is also symmetric. Moreover, $\varepsilon_{\alpha,\beta,\gamma} = \varepsilon_{\alpha,\gamma,\beta}$ for all $\beta,\gamma\in \calN$; cf. Remark \ref{rem:eps0}.
    Hence, $\form_{n,\beta,\gamma} = \form_{n,\gamma,\beta}$. Thus, we easily observe the symmetry of $\frakform_n$, which directly implies the self-adjointness of $\frakA_n$.
\end{proof}

Finally, we can now relate the solution $\bfu_n$ of the random abstract Cauchy problem to the solution of the coupled deterministic system given by $\frakA_n$. We find that the solution vector of the deterministic system consists precisely of the PCE coefficients of order up to $n$ of $\bfu_n$. We recall Notation \ref{not:PCECo}.

\begin{proposition}
\label{prop:systemMildSolPCE}
    Suppose that Assumption \ref{ass:DAzDconstant} holds for $\ols=1$. For $n \in \N_0$ let $\bfu_n$ be the mild solution of \eqref{eq:defunACP}. Define the approximate initial condition    
    \begin{equation}
    \label{eq:deffraku0n}
        \fraku_{0n}\ce\big(\widehat{(\bfu_0)}_{\alpha}\big)_{\abs{\alpha}\leq n} = \big(\sp{\bfu_0}{\Phi_\alpha}_{\LZ}\big)_{\abs{\alpha}\leq n} .
    \end{equation}  
    Then the coupled deterministic system
    \begin{equation}
    \label{eq:frakACP}
        \fraku_n'(t) = -\frakA_n\fraku_n(t) \quad(t>0), \quad \fraku_n(0) = \fraku_{0n}
    \end{equation}
    is solved by $\fraku_{n}:[0,\infty) \to \frakH_n$,
    \[
        \fraku_{n}(t)\ce \big(\sp{\bfu_n(t)}{\Phi_\beta}_{L_2(\R^N,\PP_Z)}\big)_{|\beta|\le n} \in  \frakH_n.
    \]
\end{proposition}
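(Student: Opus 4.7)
The plan is to identify the abstract Cauchy problem \eqref{eq:defunACP} --- restricted to the invariant finite-rank (over $H$) subspace $\calP_n^N \ot H$ of $\bfH$ --- with the coupled deterministic system \eqref{eq:frakACP} via a canonical unitary isomorphism. The key observation has already been established in \eqref{eq:frakformIsBfform}: the forms $\bfform_n$ and $\frakform_n$ coincide under the natural identification of coefficient vectors with polynomial chaos expansions truncated at order $n$.

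First, I would observe that since $\bfu_{0n}=\bfR_n\bfu_0 \in \calP_n^N\ot H$ and $\bfR_n\bfA\bfR_n$ has range contained in $\ran \bfR_n = \calP_n^N\ot H$, the semigroup $(\bfT_n(t))_{t\ge 0}$ from Lemma \ref{lem:RnResolventBounded} leaves $\calP_n^N\ot H$ invariant. Thus $\bfu_n(t) \in \calP_n^N\ot H$ for all $t\ge 0$, and $\bfu_n(t) = \sum_{|\beta|\le n}\Phi_\beta \ot \widehat{(\bfu_n(t))}_\beta$, so that $\fraku_n(t) = \bigl(\widehat{(\bfu_n(t))}_\beta\bigr)_{|\beta|\le n}$ is well-defined as an element of $\frakH_n$.

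Next, I would introduce the map
\[
    \Theta \from \calP_n^N\ot H \to \frakH_n,\quad \sum_{|\beta|\le n} \Phi_\beta \ot u_\beta \;\mapsto\; (u_\beta)_{|\beta|\le n},
\]
which by orthonormality of $(\Phi_\beta)_{|\beta|\le n}$ is a unitary isomorphism (Parseval) and restricts to a unitary isomorphism $\calP_n^N\ot V \to \frakV_n$. Using the form identity \eqref{eq:frakformIsBfform}, namely $\bfform_n(\bfw_n,\bfv_n)=\frakform_n(\Theta\bfw_n,\Theta\bfv_n)$ for all $\bfw_n,\bfv_n \in \calP_n^N\ot V$, together with the fact that $\bfR_n\bfA\bfR_n$ (restricted to the invariant subspace) is the operator associated with $\bfform_n$ and $\frakA_n$ is the operator associated with $\frakform_n$, I obtain that $\Theta$ intertwines the two generators. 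Under Assumption \ref{ass:DAzDconstant} with $\ols=1$, Lemma \ref{lem:Def-Bereich_A_n} guarantees $\Theta^{-1}\dom(\frakA_n) = \calP_n^N\ot D$ coincides with $\dom(\bfR_n\bfA\bfR_n) \cap (\calP_n^N\ot H)$, so the intertwining is an identity of closed operators; consequently $\Theta \bfT_n(t)\Theta^{-1}$ is the $C_0$-semigroup on $\frakH_n$ generated by $-\frakA_n$.

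Finally, I would verify that the initial conditions correspond correctly: $\Theta(\bfu_{0n}) = \Theta(\bfR_n\bfu_0) = \bigl(\widehat{(\bfu_0)}_\alpha\bigr)_{|\alpha|\le n} = \fraku_{0n}$. It then follows immediately that
\[
    \fraku_n(t) = \Theta \bfu_n(t) = \bigl(\widehat{(\bfu_n(t))}_\beta\bigr)_{|\beta|\le n}
\]
is the unique mild solution of \eqref{eq:frakACP}. I expect no substantial obstacle: the only point requiring care is the identification of domains in the intertwining step, which is handled cleanly by the unitarity of $\Theta$ combined with Lemma \ref{lem:Def-Bereign_A_n}; everything else is bookkeeping around the orthonormal basis $(\Phi_\alpha)_{\alpha \in \calN}$.
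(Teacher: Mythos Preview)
Your proposal is correct and follows essentially the same approach as the paper: both arguments hinge on the form identity \eqref{eq:frakformIsBfform}. The paper carries out a direct componentwise verification (testing $(\fraku_n'(t))_\gamma$ and $(\frakA_n\fraku_n(t))_\gamma$ against $v\in V$ and reducing each to $\bfform_n(\bfu_n(t),\Phi_\gamma\ot v)$), whereas you package the same computation as a unitary intertwining $\Theta$ of generators; this is a presentational difference rather than a conceptual one.
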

\begin{proof}
    We show the initial condition and $\fraku_n'(t) = -\frakA_n\fraku_n(t)$ componentwise. Let $\gamma \in \calN$ with $|\gamma|\le n$. Since the $\gamma$-th PCE coefficients of $\bfR_n\bfu_0$ and $\bfu_0$ agree for $\abs{\gamma}\le n$,
    \begin{align*}
        (\fraku_n(0))_\gamma &= \sp{\bfu_n(0)}{\Phi_\gamma}_{\LZ} = \widehat{(\bfR_n\bfu_0)}_\gamma= \widehat{(\bfu_0)}_\gamma = (\fraku_{0n})_\gamma.
    \end{align*}
    Abbreviate the PCE coefficients of $\bfu_n(t)$ by $u_\beta \ce (\widehat{\bfu_n(t)})_\beta$ for $\abs{\beta}\le n$.
    Testing with $v \in V$ yields
    \begin{align}
    \label{eq:righthandsideProof}
        \sp{(\frakA_n\fraku_n(t))_\gamma}{v}_H 
        = \sum_{|\beta| \le n} \form_{2n,\beta,\gamma}(u_\beta,v)
        &= \sum_{|\beta| \le n} \sum_{|\alpha|\le 2n} \varepsilon_{\alpha,\beta,\gamma} \hat{a}_\alpha(u_\beta,v) \overset{\eqref{eq:bfformnFinite}}{=} \bfform_n(\bfu_n(t),\Phi_\gamma \ot v)
    \end{align}
    for the negative right-hand side of \eqref{eq:frakACP} by definition of $\fraku_n,\frakA_n, \form_{n,\beta,\gamma}$ and using that $\form_{2n,\beta,\gamma}$ is the form corresponding to $ A_{2n,\beta,\gamma}$.
    For the negative left-hand side, by definition of $\bfu_n$, we rewrite
    \begin{align*}
        (-\fraku_n'(t))_\gamma &= - \frac{\rmd}{\rmd t} \sp{\bfu_n(t)}{\Phi_\gamma}_{\LZ} = \sp{-\bfu_n'(t)}{\Phi_\gamma}_\LZ
        = \sp{\bfR_n\bfA\bfR_n\bfu_n(t)}{\Phi_\gamma}_\LZ.
    \end{align*}
    Testing with $v \in V$, we can use \eqref{eq:bfformnFinite} and that $\bfform_n$ corresponds to $\bfR_n\bfA\bfR_n$ to conclude
    \begin{align*}
        \sp{(-\fraku_n'(t))_\gamma}{v}_H 
        &= \big(\sp{\bfR_n\bfA\bfR_n\bfu_n(t)}{\Phi_\gamma}_\LZ\big|v\big)_H= \sp{\bfR_n\bfA\bfR_n\bfu_n(t)}{\Phi_\gamma \ot v}_\bfH 
        = \bfform_n(\bfu_n(t),\Phi_\gamma \ot v), 
    \end{align*}
    which agrees with \eqref{eq:righthandsideProof} and thus finishes the proof.
\end{proof}

\subsection{Semi-discretization in space and time}
\label{subsec:randomSpaceTime}

Due to its abstract Cauchy problem structure, we can now discretize \eqref{eq:frakACP} in space and time as
in Section \ref{sec:jointRate_deterministic}. 
Let $n\in\N_0$ and $(V_m)_{m \in \N}$ be an approximating sequence of $V$. Then, $(\frakV_{n,m})_{m \in \N}$ with $\frakV_{n,m}\ce V_m^{d_n}$ is an approximating sequence of $\frakV_n$.
The spatial approximation of $\frakform_n$ as discussed in Subsection \ref{subsec:spatial} yields approximating forms $(\frakform_{n,m})_{m \in \N}$ on $\frakV_{n,m}$ and associated operators $(\frakA_{n,m})_{m \in \N}$ on $\frakH_{n,m} \ce  H_m^{d_n}$, where $H_m \ce \overline{V_m}^{_{\|\cdot\|_H}}$. Moreover, for $m\in\N$, let $(\frakT_{n,m}(t))_{t\geq 0}$ be the $C_0$-semigroup generated by $-\frakA_{n,m}$.

For $m\in\N$ we recall the $H$-orthogonal projection $P_m\from H \to H_m \seq H$ and the corresponding embedding $J_m\from H_m\to H$ from Subsection \ref{subsec:spatial}, which can be naturally lifted to $\frakP_{n,m}\from \frakH_n \to \frakH_{n,m} \seq \frakH_{n}$ and $\frakJ_{n,m}\from \frakH_{n,m}\to \frakH_{n}$, as well as
\begin{equation}
\label{eq:defbfJnmPnm}
    \bfP_{n,m}: \calP_n^N \ot H \to \calP_n^N \ot H_m,\quad \bfJ_{n,m}: \calP_n^N \ot H_m \to \calP_n^N \ot H
\end{equation}
for $n \in \N_0$. Then $\bfP_{n,m}(\sum_{|\beta|\le n} \Phi_\beta \ot u_\beta)=\sum_{|\beta|\le n} \Phi_\beta \ot (P_m u_\beta)$.
This yields semi-discretizations
\[\fraku_{n,m}\from [0,\infty)\to \frakH_{n,m},\quad \fraku_{n,m}(t) \ce \frakT_{n,m}(t)\frakP_{n,m}\fraku_{0n} \quad(t\geq 0).\]

For $n\in\N_0$ and $m \in \N$, let $\frakF_{n,m}\from [0, \infty) \to \calL(\frakH_{n,m})$ be a time discretization method for $(\frakT_{n,m}(t))_{t \ge 0}$ as in Subsection \ref{subsec:temporal}. Fix a final time $\bar{T} \ge 0$. For $k \in \N$ and $N_k \in \N$ let $\tau_k=(\tau_k^i)_{1 \le i \le N_k} \in [0,\bar{T}]^{N_k}$ be a family of time steps, $\calT_k$ the associated grid, and $N_{t,k}\ce \max\{j\in\{0,\ldots,N_k\}:\; \sum_{i=1}^j \tau_k^i=t\}$. For $t\in \calT_k$ let $\frakF_{n,m,k}(t) \ce \prod_{i=1}^{N_{t,k}} \frakF_{n,m}(\tau_k^i)$. Temporal approximation with $\frakF_{n,m,k}$ then yields the discretizations 
 \[
    \fraku_{n,m,k}\from \calT_k \to \frakH_{n,m},\quad \fraku_{n,m,k}(t) \ce \frakF_{n,m,k}(t) \frakP_{n,m} \fraku_{0n} \quad(t\in \calT_k).
\]
Following the idea of Proposition \ref{prop:systemMildSolPCE}, we consider the corresponding element in $\bfH_m \ce L_2(\R^N,\PZ;H_m)$ with PCE coefficients $\fraku_{n,m,k}(t)$. That is, we let
\begin{equation}
\label{eq:defbfunmk}
    \bfu_{n,m,k}\from \calT_k \to \bfH_m,\quad \bfu_{n,m,k}(t) \ce  \sum_{\abs{\beta}\leq n} \Phi_\beta \ot (\fraku_{n,m,k}(t))_\beta  \quad(t\in \calT_k).
\end{equation}
Combining the results from Section \ref{sec:jointRate_deterministic} and Subsection \ref{subsec:PCEapproxPb}, we are in a position to estimate the semi-discretization error in space and time. 

\begin{proposition}
\label{prop:convergence_rate_random_symmetricPartI}
    Suppose that Assumption \ref{ass:DAzDconstant} holds for some $\ols\ge 1$. Further, assume that $V\hookrightarrow H$ is compact.
    Suppose that the space discretization converges with order $\px>0$ on $D^s$
    for some $0<s\leq \ols$ for the stationary problem. 
    Let $r$ be an A-stable rational function as in Definition \ref{def:rational_time_discretization}, suppose that the time discretization methods $\frakF_{n,m}$ are induced by $r$ for all $n,m\in\N$, and let $\pt>0$ be the order of convergence of the time discretizations. Let $\bfJ_{n,m}, \bfu_{n,m,k},\bfu_n$, and $\fraku_{0n}$ as in \eqref{eq:defbfJnmPnm}, \eqref{eq:defbfunmk}, \eqref{eq:defunACP}, and \eqref{eq:deffraku0n}, respectively.\\
    Then for all $\bar{T}>0$ there exist $C_{\bar{T}}\geq 0$ and $\tau_0=\tau_0(\bar{T})>0$ such that for $\max_{i=1,\ldots,N_k} \tau_k^i \leq \tau_0$, we have 
    \begin{align*}
        &\|\bfJ_{n,m} \bfu_{n,m,k}(t) - \bfu_n(t)\|_{\bfH} \le C_{\bar{T}} \bigg(m^{-\px} + \Big(\max_{i=1,\ldots,N_k} \tau_k^i\Big)^{\pt}\bigg) \|\fraku_{0n}\|_{\frakA_n^{\max\{s+1,\pt\}}}
    \end{align*}
     for all $t \in \calT_k$, $\bfu_0 \in \bfH$ such that  $\fraku_{0n}\in \dom(\frakA_n^{\max\{s+1,\pt\}})$, $n\in\N_0$, and $m,k\in\N$.
\end{proposition}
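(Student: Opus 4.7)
The plan is to reduce the randomness–space–time semi-discretization error to a purely deterministic space–time error for the coupled system \eqref{eq:frakACP}, and then apply the deterministic Theorem~\ref{thm:jointrate_deterministic_symmetric}.

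First, by Proposition~\ref{prop:systemMildSolPCE} we have $\bfu_n(t) = \sum_{|\beta|\le n} \Phi_\beta \otimes (\fraku_n(t))_\beta$, and by construction \eqref{eq:defbfunmk}, $\bfJ_{n,m}\bfu_{n,m,k}(t) = \sum_{|\beta|\le n} \Phi_\beta \otimes J_m(\fraku_{n,m,k}(t))_\beta$. Since $(\Phi_\alpha)_{\alpha\in\calN}$ is an orthonormal basis of $L_2(\R^N,\PZ)$, Parseval's identity gives the key isometric identity
\begin{equation*}
    \|\bfJ_{n,m}\bfu_{n,m,k}(t) - \bfu_n(t)\|_\bfH = \|\frakJ_{n,m}\fraku_{n,m,k}(t) - \fraku_n(t)\|_{\frakH_n}.
\end{equation*}
This reduces the task to bounding the space–time semi-discretization error of the deterministic abstract Cauchy problem \eqref{eq:frakACP}, where $\fraku_n(t) = \frakT_n(t)\fraku_{0n}$ with $-\frakA_n$ the generator, and $\fraku_{n,m,k}(t) = \frakF_{n,m,k}(t)\frakP_{n,m}\fraku_{0n}$.

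Next, I verify the hypotheses of Theorem~\ref{thm:jointrate_deterministic_symmetric} applied with $(\form, A, V, H) \mapsto (\frakform_n, \frakA_n, \frakV_n, \frakH_n)$. Symmetry, boundedness, and coercivity of $\frakform_n$ are provided by Propositions~\ref{prop:frakform_n_bddcoercive} and \ref{prop:frakform_n_symmetric}; the embedding $\frakV_n = V^{d_n}\hra H^{d_n} = \frakH_n$ inherits compactness from $V\hra H$ via the finite direct sum structure; and the time discretization methods $\frakF_{n,m}$ are induced by the A-stable rational function $r$ and converge of order $\pt$ by assumption.

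The main obstacle is verifying the remaining hypothesis: that the Galerkin space discretization $(\frakA_{n,m})_{m\in\N}$ converges of order $\px$ on $\dom(\frakA_n^s)$ for the stationary problem associated with $\frakA_n$. Using the identity $\frakform_n((u_\beta)_\beta,(v_\gamma)_\gamma) = \bfform(\bfw_n,\bfv_n)$ from \eqref{eq:frakformIsBfform}, the Galerkin scheme for $\frakA_n$ on $\frakV_{n,m} = V_m^{d_n}$ corresponds, under the isometric PCE isomorphism $\calP_n^N \otimes H \cong \frakH_n$, to the Galerkin scheme for $\bfR_n\bfA\bfR_n$ on $\calP_n^N\otimes V_m$. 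The transfer then uses the $\PZ$-a.s.\ uniform boundedness and coercivity of $(\form_z)_{z\in\R^N}$ from Assumption~\ref{ass:formUnifBdCoercive} together with the assumed pointwise order-$\px$ convergence of $(A_{z,m})_{m\in\N}$ on $D^s$, yielding the required rate with constants possibly depending on $n$. Since $C_{\bar{T}}$ is permitted to depend on $n$, this is harmless. Applying Theorem~\ref{thm:jointrate_deterministic_symmetric} then gives
\begin{equation*}
    \|\frakJ_{n,m}\fraku_{n,m,k}(t) - \fraku_n(t)\|_{\frakH_n} \le C_{\bar{T}}\bigl(m^{-\px} + (\max_{i} \tau_k^i)^{\pt}\bigr)\|\fraku_{0n}\|_{\frakA_n^{\max\{s+1,\pt\}}},
\end{equation*}
which combined with the Parseval identity above completes the proof.
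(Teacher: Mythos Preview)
Your approach is exactly the paper's: reduce via Parseval's identity to the deterministic coupled system and invoke Theorem~\ref{thm:jointrate_deterministic_symmetric}. The paper's own proof is in fact just these two lines, so your added verification of symmetry, boundedness, coercivity, and compactness of $\frakV_n\hra\frakH_n$ is correct extra detail.

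There is, however, one genuine slip. You write that ``$C_{\bar{T}}$ is permitted to depend on $n$, so this is harmless.'' It is not: the statement asserts the existence of a single $C_{\bar{T}}$ valid \emph{for all} $n\in\N_0$. Fortunately the fix is straightforward and you have already set it up. By Proposition~\ref{prop:frakform_n_bddcoercive} the forms $\frakform_n$ share the \emph{same} boundedness and coercivity constants $K,\kappa$ for every $n$, and the time scheme is induced by the same A-stable $r$. The constant produced by Theorem~\ref{thm:jointrate_deterministic_symmetric} depends only on these data and on the spatial convergence constant, so uniformity in $n$ follows once you argue that the Galerkin approximation of $\frakA_n$ on $\frakV_{n,m}=V_m^{d_n}$ converges on $\dom(\frakA_n^s)$ with an $n$-independent constant. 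Your sketch via the isometry $\frakH_n\cong\calP_n^N\otimes H$ and \eqref{eq:frakformIsBfform} is the right idea; just make explicit that the resulting constant comes from the uniform $K,\kappa$ (e.g.\ via Theorem~\ref{thm:result_stationary}, whose constant is $K^2\kappa^{-1}C_{\gamma,\dom(A^s)}C^*_{\gamma,H}$) rather than conceding $n$-dependence.
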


\begin{proof}
    Note that by Parseval's identity, we have
    \[\norm{\bfJ_{n,m} \bfu_{n,m,k}(t) - \bfu_n(t)}_{\bfH} = \norm{\frakJ_{n,m} \fraku_{n,m,k}(t) - \fraku_n(t)}_{\frakH_n}.\]
    Since $\frakform_n$ is symmetric by Proposition \ref{prop:frakform_n_symmetric}, we can apply Theorem \ref{thm:jointrate_deterministic_symmetric}, which results in
    \begin{align*}
        \norm{\bfJ_{n,m} \bfu_{n,m,k}(t) - \bfu_n(t)}_{\bfH}
        & \leq C\Big(m^{-\px} + \Big(\max_{i=1,\ldots,N_k} \tau_k^i\Big)^{\pt}\Big) \|\fraku_{0n}\|_{\frakA_n^{\max\{s+1,\pt\}}}. \qedhere
    \end{align*}
\end{proof}

Rather than estimating the error in $\bfH$ in terms of the graph norm of $\fraku_{0n}$, we would prefer to estimate it in terms of a suitable norm of $\bfu_{0n}$ or $\bfu_0$ directly. A further condition is required to do so; this will be discussed in Subsection \ref{subsec:fullErrorEstimate}.

\subsection{Semi-discretization in randomness}
\label{subsec:semiDiscreteRandom}

For an estimate of the full discretization error, it  further remains to estimate the randomness semi-dis\-cretiza\-tion error $\|\bfu_n(t)-\bfu(t)\|_\bfH$ in terms of a suitable norm of $\bfu_0$ with decay of some rate in $n$. Since $\bfu_n(t) \neq \bfR_n \bfu(t)$ (cf. Remark \ref{rem:straightPCEfails}), estimating the semi-discretization error in randomness is more intricate than a simple application of the PCE error estimate of Corollary \ref{cor:PCEboundvector}. 

The estimate is established in several steps, the main proof being presented in Subsection \ref{subsec:TrotterKato} based on technical estimates in the preceeding subsections.
In Subsection \ref{subsec:pointwiseEst}, we present an assumption that implies pointwise in $z$ estimates. With a composition estimate, they can be lifted to Sobolev subspaces of $\bfH$. This lifting leads to bounds for $\bfA$ and its resolvent in suitable norms, as illustrated in Subsection \ref{subsec:mappingPropertiesBfA}. These bounds in turn imply a resolvent difference estimate; cf. Subsection \ref{subsec:resolventDiffEst}. Via a modified Trotter--Kato argument in Subsection \ref{subsec:TrotterKato}, this yields a convergence rate for the corresponding semigroups, i.e. a convergence rate for the semi-discretization in randomness.

Henceforth, $C$, $C_\ell$, $C_{\ell,r}$, etc.\ denote generic constants, whose values can vary from line to line and may depend on the quantities indexed. 
  
\subsubsection{Pointwise (in \texorpdfstring{$z$}{\textit{z}}) estimates}
\label{subsec:pointwiseEst}

We recall the definition of $\rho$ from \eqref{eq:defRho}. In the case where $Z$ has only normally or Beta-distributed components, $\rho$ can be omitted in the following due to $\rho\equiv 1$. It is, however, required in the presence of Gamma-distributed components of $Z$.

\begin{assumption}
\label{ass:coeffCond}
    Suppose that Assumption \ref{ass:DAzDconstant} holds for some $\ols\geq 1$. Assume that for some $\ell \in \N_0$, $[z \mapsto \form_z(u,v)] \in C^\ell(\R^N)$ for all $u,v\in V$ and there exists a constant $C_\ell\ge 0$ such that
\begin{equation*}
    |\rho(z)^{\alpha/2}\partial_z^\alpha a_z(u,v)| \le C_\ell \|u\|_D \|v\|_H 
\end{equation*}
for $\PZ$-almost every $z\in\R^N$ and for all $u \in D$, $v \in V$, and $\alpha \in \calN$ with $|\alpha| \le \ell$.
\end{assumption}

Recall that then $(A_z)_{z \in \R^N}$ are $\PZ$-almost surely uniformly sectorial as a consequence of $\PZ$-almost sure boundedness and coercivity of $(\form_z)_{z \in \R^N}$.

\begin{definition}
    Let $\theta_0 \in (0,\frac{\pi}{2})$ be the $\PZ$-almost surely uniform angle of sectoriality of $(A_z)_{z \in \R^N}$ such that $\sigma(A_z) \seq \Sigma_{\theta_0}$, where $\Sigma_{\theta_0} \ce \{\lambda \in \C: \lambda \neq 0, \abs{\arg(\lambda)}<\theta_0\}$ is the open sector of angle $\theta_0$. Let $\eta_0 \ce \pi-\theta_0$ and $\eta \in (\frac{\pi}{2},\eta_0)$. For $r>0$, define the curve $\gamma_r$ as the union of the three curves $\gamma_r^1,\gamma^2_r$, and $\gamma^3_r$, where $\gamma^1_r\from (-\infty,-r)\to\C, \gamma^1_r(\rho) \ce -\rho \e^{-\i \eta}$, $\gamma^2_r\from (-\eta,\eta)\to\C, \gamma^2_r(\varphi) \ce r \e^{\i \varphi}$, and $\gamma^3_r\from (r,\infty)\to\C, \gamma^3_r(\rho) \ce \rho \e^{\i \eta}$.
\end{definition}
\begin{center}
\captionsetup{type=figure}
        \begin{tikzpicture}[>=stealth, scale=0.7]
    \def\etaa{3/5*pi * (1/pi*180)};
    \def\r{1};
    \draw[->] (-3,0)--(3,0) node[right]{$\Re$};
    \draw[->] (0,-3)--(0,3) node[above]{$\Im$};
    \draw[domain=-3:(-\r),samples=50,thick] plot ({-\x*cos(-\etaa)}, {-\x*sin(-\etaa)});
    \draw[domain=-\etaa:\etaa,samples=50,thick] plot ({\r*cos(\x)},{\r*sin(\x)});
    \draw[domain=\r:3,samples=50,thick] plot ({\x*cos(\etaa)}, {\x*sin(\etaa)});
    \node at (-1.2,2.7) {$\gamma_r$};
    \draw[-] (1,0.3)--(1,-0.3); \node at (1,-0.6) {$r$};
    \draw[->] (1.4,0) arc (0:\etaa:1.4) node[midway, right] {$\eta$};
    \draw[dashed] (0,0) -- (130:3);
    \draw[dashed] (0,0) -- (-130:3);
    \draw[->, densely dashdotted] (1.8,0) arc (0:-130:1.8) node[midway, below right] {$\eta_0$};
    \draw[->, densely dashdotted] (-1.8,0) arc (-180:-130:1.8) node[midway, left] {$\theta_0$};
    \node at (-1.8,0.8) {$-\Sigma_{\theta_0}$};
\end{tikzpicture}
\captionof{figure}{The curve $\gamma_r$.}
\end{center}

We summarise some consequences of Assumption \ref{ass:coeffCond}.
\begin{lemma}\label{lem:PointwiseEstimatesCoeffCond}
    Suppose that Assumption \ref{ass:coeffCond} holds for some $\ell\in \N_0$. Then the following statements hold for all $\alpha \in \calN$ with $|\alpha|\le \ell$ and $\PZ$-almost every $z \in \R^N$, with all constants independent of $\alpha$ and $z$. 
    \begin{enumerate}[label=(\alph*)]
        \item\label{item:pointwiseAzDerivativeBdd}
        We have $\|\rho(z)^{\alpha/2}\partial_z^\alpha A_z\|_{\calL(D,H)}\le C_\ell$ for some $C_\ell\ge 0$.
        \item \label{item:derivativeResolventHD}
        For all $r >0$ there exists $C_{\ell,r}\geq0$ such that for all $\lambda \in \gamma_r$, we have $\|\rho(z)^{\alpha/2}\partial_z^\alpha(\lambda+A_z)^{-1}\|_{\calL(H,D)}\le C_{\ell,r}$ with $C_{0,r}=2+\frac{1}{r}$.
        \item \label{item:derivativeResolventHBddonGamma}
        For all $\bar{T}>0$, there exists $C_{\bar{T},\ell}\geq0$ such that $\|\rho(z)^{\alpha/2}\partial_z^\alpha(\lambda+A_z)^{-1}\|_{\calL(H)}\le \frac{C_{\bar{T},\ell}}{|\lambda|}$ for all $\lambda\in\gamma_{\frac{1}{t}}$ with $t \in (0,\bar{T}]$.
        \item \label{item:pointwiseTzderivativesBdd}  For all $\bar{T}>0$ there exists $C_{\bar{T},\ell}\ge 0$ such that
        $\norm{\rho(z)^{\alpha/2}\partial_z^\alpha T_z(t)}_{\calL(H)} \le C_{\bar{T},\ell}$ for all $t \in [0,\bar{T}]$.
        \item  \label{item:pointwiseAnalytic} For all $\bar{T}>0$ there exists $C_{\bar{T},\ell}\ge 0$ such that for all $t \in [0,\bar{T}]$,
        \[
            \sup_{0 \le \tau \le t}\|\rho(z)^{\alpha/2} \partial_z^\alpha(\tau A_zT_z(\tau))\|_{\calL(H)}\le C_{\bar{T},\ell}.
        \] 
        \end{enumerate}
\end{lemma}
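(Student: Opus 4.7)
Part~(a) is essentially a reformulation of Assumption~\ref{ass:coeffCond}: since $V$ is dense in $H$, the estimate there extends the sesquilinear form $(u,v)\mapsto \rho(z)^{\alpha/2}\partial_z^\alpha a_z(u,v)$ to a bounded form on $D\times H$ with norm at most $C_\ell$, which by Riesz representation corresponds to the operator $\rho(z)^{\alpha/2}\partial_z^\alpha A_z \in \calL(D,H)$; uniqueness of the associated operator gives that this is indeed the $z$-derivative of the original $A_z$.

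For the resolvent bounds~(b) and~(c), my plan is to induct on $|\alpha|$ via the identity
\[
    \partial_{z_j}(\lambda+A_z)^{-1} = -(\lambda+A_z)^{-1}(\partial_{z_j}A_z)(\lambda+A_z)^{-1},
\]
obtained by differentiating $(\lambda+A_z)^{-1}(\lambda+A_z)=I$. Iterating with the Leibniz rule will express $\partial_z^\alpha(\lambda+A_z)^{-1}$ as a finite sum of products of the shape $R_z(\lambda)[\partial_z^{\beta_1}A_z]R_z(\lambda)\cdots[\partial_z^{\beta_k}A_z]R_z(\lambda)$ with $\sum_i\beta_i=\alpha$. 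Since $\rho(z)^{\alpha/2}=\prod_i\rho(z)^{\beta_i/2}$, the weight will distribute onto the $\partial^{\beta_i}A_z$ factors, each of which is bounded by $C_\ell$ in $\calL(D,H)$ by part~(a). For the base case of~(b), sectoriality of $A_z$ yields $\|R_z(\lambda)\|_{\calL(H)}\le 1/|\lambda|\le 1/r$ on $\gamma_r$, whence $\|A_z R_z(\lambda)\|_{\calL(H)}=\|I-\lambda R_z(\lambda)\|_{\calL(H)}\le 2$; using the graph norm on $D$ (equivalent to the fixed $D$-norm by Assumption~\ref{ass:DAzDconstant}) then gives $\|R_z(\lambda)\|_{\calL(H,D)}\le 2+1/r$. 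Reading each product right-to-left as $H\to D\to H\to\cdots\to D$ will finish~(b). For~(c), the strategy is the same except that one resolvent in each product is estimated as $H\to H$ with the $1/|\lambda|$ bound while the others keep the $H\to D$ estimate; the resulting constant is uniform on $\gamma_{1/t}$ for $t\in (0,\bar{T}]$ because $|\lambda|\ge 1/\bar{T}$ on the rays.

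Parts~(d) and~(e) will follow from the Dunford integral representation. For~(d), I start from
\[
    T_z(t)=\frac{1}{2\pi\i}\int_{\gamma_{1/t}} \e^{\lambda t}(\lambda+A_z)^{-1}\,\rmd\lambda,
\]
differentiate under the integral, and apply~(c) to reduce to estimating $\int_{\gamma_{1/t}}|\e^{\lambda t}||\lambda|^{-1}\,|\rmd\lambda|$. Splitting into the arc $\gamma^2_{1/t}$ of length $2\eta/t$ (on which $|\e^{\lambda t}|\le \e$) and the rays $\gamma^{1,3}_{1/t}$ parametrized by $\lambda=\rho\e^{\pm\i\eta}$ with $\cos\eta<0$ (so $|\e^{\lambda t}|=\e^{-\rho t|\cos\eta|}$), the substitution $s=\rho t$ on the rays produces a bound uniform in $t\in (0,\bar{T}]$. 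For~(e), differentiating the Dunford integral in $\tau$ gives $\tau A_zT_z(\tau)=-\frac{\tau}{2\pi\i}\int_{\gamma_{1/\tau}}\lambda\e^{\lambda\tau}(\lambda+A_z)^{-1}\,\rmd\lambda$; the extra $|\lambda|$ in the integrand cancels the $1/|\lambda|$ from~(c), leaving the uniformly bounded integral $\tau\int_{\gamma_{1/\tau}}|\e^{\lambda\tau}|\,|\rmd\lambda|$ of the same flavor. I expect the main obstacle to be the combinatorial bookkeeping in the Leibniz expansion underlying~(b) and~(c), together with the consistent distribution of $\rho(z)^{\alpha/2}=\prod_i\rho(z)^{\beta_i/2}$, which works precisely because $\rho$ is encoded multiplicatively componentwise in~\eqref{eq:defRho}.
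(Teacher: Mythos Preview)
Your proposal is correct and, for parts~(a)--(d), follows essentially the same route as the paper: part~(a) via duality/Riesz, parts~(b) and~(c) via the iterated Leibniz expansion of $\partial_z^\alpha(\lambda+A_z)^{-1}$ as a sum over ordered factorizations $R_z(\partial^{\beta_1}A_z)R_z\cdots(\partial^{\beta_k}A_z)R_z$ with the weight $\rho(z)^{\alpha/2}$ distributed multiplicatively, and part~(d) via the contour representation together with the standard arc-plus-rays estimate of $\int_{\gamma_{1/t}}|\e^{\lambda t}|\,|\lambda|^{-1}\,|\rmd\lambda|$.

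For part~(e) your route is actually a bit more direct than the paper's. You use the representation $\tau A_zT_z(\tau)=-\tfrac{\tau}{2\pi\i}\int_{\gamma_{1/\tau}}\lambda\,\e^{\lambda\tau}(\lambda+A_z)^{-1}\,\rmd\lambda$ (valid since $\int_{\gamma}\e^{\lambda\tau}\,\rmd\lambda=0$), so that the $z$-dependence enters only through the resolvent; differentiating under the integral and applying~(c) leaves the uniformly bounded integral $\tau\int_{\gamma_{1/\tau}}|\e^{\lambda\tau}|\,|\rmd\lambda|$. The paper instead applies the Leibniz rule to the product $A_z\cdot T_z(\tau)$, inserts an $A_z^{-1}A_z$, bounds $\partial_z^{\alpha-\beta}A_z$ via~(a) and $\|A_z^{-1}\|_{\calL(H,D)}$ via coercivity, and then still has to handle $\tau A_z\,\partial_z^\beta T_z(\tau)$ via the contour integral. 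Your approach avoids this detour; the paper's has the minor advantage of making the $\calL(D,H)$--$\calL(H,D)$ pairing explicit, but both yield the same uniform bound.
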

\begin{proof}
    We start by showing \ref{item:pointwiseAzDerivativeBdd}. For $u \in D$ and $\PZ$-almost every $z\in\R^N$, we estimate
        \begin{align*}
            \|\rho(z)^{\alpha/2}\partial_z^\alpha A_z u\|_{H}&= \sup_{v \in V, \|v\|_H=1} |\rho(z)^{\alpha/2} \sp{\partial_z^\alpha A_z u}{v}_H|= \sup_{v \in V, \|v\|_H=1} |\rho(z)^{\alpha/2} \partial_z^\alpha\form_z(u,v)|\\
            & \le \sup_{v \in V, \|v\|_H=1} C_\ell \|u\|_D \|v\|_H = C_\ell \|u\|_D.
        \end{align*}
    
    By uniform sectoriality of $(A_z)_{z\in\R^N}$, 
    \begin{equation}
    \label{eq:AzSectorial}
        \|\lambda(\lambda+A_z)^{-1}\|_{\calL(H)} \le 1
    \end{equation}
    for all $\lambda \in \Sigma_\eta\supseteq \gamma_r$ with $\eta<\eta_0$. This implies \ref{item:derivativeResolventHD} for $\alpha=0$ via $\rho(z)^{0/2}=1$ and
    \begin{align*}
        \|(\lambda+A_z)^{-1}u\|_D &=\sqrt{\|I-\lambda(\lambda+A_z)^{-1}u\|_H^2+\|(\lambda+A_z)^{-1}u\|_H^2}\\
        &\le \sqrt{4+\frac{1}{|\lambda|^2}} \|u\|_H \le \sqrt{4+\frac{1}{r^2}}\|u\|_H \le \left(2+\frac{1}{r}\right)\|u\|_H
    \end{align*}
    for all $\lambda \in \gamma_r$, where we have used that $\abs{\lambda}\ge r$ on $\gamma_r$ and $A_z(\lambda+A_z)^{-1}=I-\lambda(\lambda+A_z)^{-1}$. 
    To show \ref{item:derivativeResolventHD} for general $\abs{\alpha}\le \ell$, we assume without loss of generality that $\alpha-e_1 \in \calN$. We note that by Leibniz' rule and iteratively rewriting the derivatives of the resolvent, for $\PZ$-almost every $z\in\R^N$, we obtain
    \begin{align*}
         \partial_z^{\alpha}(\lambda+A_z)^{-1} &= \partial_z^{\alpha-e_1}(-(\lambda+A_z)^{-1}(\partial_z^{e_1} A_z)(\lambda+A_z)^{-1})\\
        &= -\sum_{\beta\leq \alpha-e_1} \binom{\alpha-e_1}{\beta} \partial_z^{\alpha-e_1-\beta} (\lambda+A_z)^{-1} \sum_{\gamma\le \beta} \binom{\beta}{\gamma} (\partial_z^{\beta-\gamma+e_1} A_z) \partial_z^{\gamma} (\lambda+A_z)^{-1}\\
        &= \ldots =(\lambda+A_z)^{-1} \sum_{P_j} \bigg[\prod_{i=1}^{p_j} C_{\alpha,i,j} \big(\partial_z^{\beta_i^j} A_z\big)  (\lambda+A_z)^{-1}\bigg]
    \end{align*}
    for some $C_{\alpha,i,j}\in \R$. The sum is taken over all partitions $P_j=(\beta_i^j)_{1 \le i \le p_j}$ of $\alpha$ such that $\alpha=\beta_1^j+\ldots+\beta_{p_j}^j$ for some $1 \le p_j \le |\alpha|$ and with $\beta_i^j \neq 0$ for all $i,j$. Consequently, also
    \begin{align}
    \label{eq:LeibnizProofResolventHD}
        \rho(z)^{\alpha/2}\partial_z^{\alpha}(\lambda+A_z)^{-1}
        &=(\lambda+A_z)^{-1} \sum_{P_j} \bigg[\prod_{i=1}^{p_j} C_{\alpha,i,j} \big(\rho(z)^{\beta_i^j/2}\partial_z^{\beta_i^j} A_z\big)  (\lambda+A_z)^{-1}\bigg].
    \end{align}
    Since as many derivatives (of some order) of $A_z$ appear (weighted with the respective power of $\rho(z)$) as resolvents of $A_z$, we can estimate the $\calL(H)$-norm of each factor in the product by $\abs{C_{\alpha,i,j}}C_\ell(2+\frac{1}{r})$ by part \ref{item:pointwiseAzDerivativeBdd} and the sectoriality estimate \eqref{eq:AzSectorial}. Since sum and product are finite, the estimate $\alpha=0$ applied to the first factor in \eqref{eq:LeibnizProofResolventHD} yields the claim.
    
    We continue with \ref{item:derivativeResolventHBddonGamma}. Let $r=\frac{1}{t}$. For $\alpha=0$, \eqref{eq:AzSectorial} yields the claim with $C_{\bar{T},\ell}=1$. For general $\alpha$, we note that the $\calL(H)$-norm of each factor of the product in \eqref{eq:LeibnizProofResolventHD} is bounded by $C_\ell(2+\frac{1}{r})=C_\ell(2+t)\le C_\ell(2+\bar{T})$.
    The $\calL(H)$-norm of \eqref{eq:LeibnizProofResolventHD} can thus be estimated by $C_{\bar{T},\ell}\|(\lambda+A_z)^{-1}\|_{\calL(H)}$, where the norm is bounded by $\abs{\lambda}^{-1}$ due to \eqref{eq:AzSectorial}.
    
    We pass to the proof of \ref{item:pointwiseTzderivativesBdd}. Observe that \ref{item:pointwiseTzderivativesBdd} and \ref{item:pointwiseAnalytic} are trivially satisfied for $t=0$. Hence, let $t>0$ and $r=\frac{1}{t}$. The Laplace inversion formula \cite[Corollary~III.5.15]{EngelNagel2000} yields the representation
    \[
        T_z(t)u = \frac{1}{2\pi \i} \int_{\gamma_r} \e^{t\lambda} (\lambda + A_z)^{-1}u \,\rmd \lambda
    \]
    of the semigroup for all $t>0$, $u\in H$, and $\PZ$-almost every $z\in\R^N$. Interchanging differentiation and integration yields that $z\mapsto T_z(t)u$ is continuously differentiable up to order $\ell$. Furthermore, by part \ref{item:derivativeResolventHBddonGamma} there exists $C_{\bar{T},\ell}\ge 0$ with
    \begin{align*}
        \big\|\rho(z)^{\alpha/2}\partial_z^\alpha T_z(t)u\big\|_H 
        &\le \frac{1}{2\pi} \int_{\gamma_r} |\e^{t\lambda}| \|\rho(z)^{\alpha/2}\partial_z^\alpha(\lambda + A_z)^{-1}u\|_H \,\rmd \lambda
        \leq \frac{C_{\bar{T},\ell}}{2\pi} \norm{u}_H  \int_{\gamma_r} \frac{|\e^{t\lambda}|}{\abs{\lambda}}  \,\rmd \lambda 
    \end{align*}
    for all $t\in [0,\bar{T}]$, $u\in H$, $\abs{\alpha}\leq \ell$, and $\PZ$-almost every $z\in\R^N$. It remains to estimate the curve integral uniformly in $t$. On $\gamma_r^3$, due to $r=\frac{1}{t}$, we obtain
    \begin{equation*}
        \int_{\gamma_r^3} \frac{|\e^{t\lambda}|}{\abs{\lambda}} \,\rmd \lambda = \int_r^\infty \frac{\e^{t\rho \cos(\eta)}}{\rho} \,\rmd \rho 
        \le \frac{1}{r} \int_r^\infty \e^{t\rho \cos(\eta)} \,\rmd \rho 
        = \frac{1}{rt} \frac{\e^{rt \cos(\eta)}}{ \abs{\cos(\eta)}} 
        = \frac{\e^{ \cos(\eta)}}{ \abs{\cos(\eta)}}.
    \end{equation*}
    An analogous estimate holds on $\gamma_r^1$. The proof of \ref{item:pointwiseTzderivativesBdd} is finished by estimating
    \begin{equation*}
        \int_{\gamma_r^2} \frac{|\e^{t\lambda}|}{\abs{\lambda}} \,\rmd \lambda = \int_{-\eta}^\eta \frac{\e^{rt \cos(\varphi)}}{r} \cdot r\,\rmd \varphi 
        \le \int_{-\eta}^\eta \e^{1} \,\rmd \varphi
        = 2\e \eta.
    \end{equation*}

    Lastly, we show \ref{item:pointwiseAnalytic}. Leibniz' rule, multiplying by $1$, and part \ref{item:pointwiseAzDerivativeBdd} yield
    \begin{align*}
        \big\|\rho(z)^{\alpha/2}\partial_z^\alpha(\tau A_zT_z(\tau))u\big\|_H 
        &= \bigg\|\sum_{\beta \le \alpha} \binom{\alpha}{\beta} \big(\rho(z)^{(\alpha-\beta)/2}\partial_z^{\alpha-\beta}A_z\big) A_z^{-1} \tau A_z(\rho(z)^{\beta/2} \partial_z^{\beta}T_z(\tau))u\bigg\|_H\\
        &\le C_{\abs{\alpha}}C_\ell  \|A_z^{-1}\|_{\calL(H,D)} \max_{\beta \le \alpha}\big\|\tau A_z(\rho(z)^{\beta/2} \partial_z^{\beta}T_z(\tau))u\big\|_H.
    \end{align*}
    The assumed coercivity implies $\|A_zu\|_{H} \ge \kappa\|u\|_H$ for all $u\in D$ via the embedding $V \hra H$, and thus $\|A_z^{-1}\|_{\calL(H)} \le \frac{1}{\kappa}$. Consequently, $\|A_z^{-1}u\|_D^2 = \|u\|_H^2+\|A_z^{-1}u\|_H^2 \le (1+\frac{1}{\kappa^2})\|u\|_H^2$. Hence, $\|A_z^{-1}\|_{\calL(H,D)}^2 \le 1+\frac{1}{\kappa^2}$. It remains to estimate the last factor. Again using the Laplace inversion formula, for $r=\bar{T}^{-1}$, \eqref{eq:LeibnizProofResolventHD} with all partitions $P_j$ of $\beta$ instead of $\alpha$ allows us to rewrite
    \begin{align*}
        \tau A_z(\rho(z)^{\beta/2} \partial_z^\beta T_z(\tau))u &= \frac{1}{2\pi \i} \int_{\gamma_r} \tau\e^{\tau\lambda} A_z (\rho(z)^{\beta/2} \partial_z^\beta(\lambda + A_z)^{-1})u \,\rmd \lambda\\
        & = \frac{C_\ell}{2\pi \i} \int_{\gamma_r} \tau\e^{\tau\lambda} A_z  (\lambda+A_z)^{-1} \sum_{P_j} \bigg[\prod_{i=1}^{p_j} \big(\rho(z)^{\beta_i^j/2} \partial_z^{\beta_i^j} A_z\big)(\lambda+A_z)^{-1} \bigg] u \,\rmd \lambda.
    \end{align*}
    In the proof of \ref{item:derivativeResolventHBddonGamma}, we have already shown that the $\calL(H)$-norm of the sum over all partitions $P_j$ is bounded by some $C_{\bar{T},\ell}\ge 0$. We further recall that
    \[
        \|A_z(\lambda+A_z)^{-1}\|_{\calL(H)} = \|I-\lambda(\lambda+A_z)^{-1}\|_{\calL(H)} \le 2
    \]
    by the sectoriality estimate \eqref{eq:AzSectorial}. Hence,
    \begin{align*}
        \|\rho(z)^{\beta/2}\tau A_z\partial_z^\beta T_z(\tau)u\|_H &\le \frac{C_\ell C_{\bar{T},\ell}}{\pi} \int_{\gamma_r}  \tau |\e^{\tau\lambda}|\,\rmd \lambda\le \frac{C_\ell C_{\bar{T},\ell}}{\pi} \left(\frac{2}{\abs{\cos(\eta)}} +2\e\eta\right) \|u\|_H,
    \end{align*}
    where the last step is obtained by an estimation of the curve integral.
\end{proof}

\subsubsection{Mapping properties of \texorpdfstring{$\bfA$}{\textbf{A}} and its resolvent}
\label{subsec:mappingPropertiesBfA}

From the pointwise estimates for $A_z$, we now derive estimates for $\bfA$, its resolvent, and the semigroup generated by $-\bfA$ in suitable Sobolev spaces.

\begin{proposition}
    \label{prop:SobolevEstBfA}
    Suppose that Assumption \ref{ass:coeffCond} holds for some $\ell\in \N_0$. Then for all $q\in\{0,\ldots,\ell\}$, $r>0$, and $\bar{T}>0$ there are constants $C_q,C_{q,r}, C_{\bar{T},q} \ge 0$ such that the following estimates hold for all $\bff\in H_\rho^q(\R^N,\PZ;H)$.
    \begin{enumerate}[label=(\alph*)]
    \item \label{item:bfAfSobolevnorm}  
        $\|\bfA \bff\|_{H_\rho^q(\R^N,\PZ;H)} \le C_q\|\bff\|_{H_\rho^q(\R^N,\PZ;D)}$ if additionally $\bff\in H_\rho^q(\R^N,\PZ;D)$. In particular, $\|\bfA \bff\|_\bfH \le C_0\|\bff\|_{L_2(\R^N,\PZ;D)}$.
        \item\label{item:ResolventBfH} $\|(\lambda+\bfA)^{-1}\bff\|_{H_\rho^{q}(\R^N,\PZ;D)} \le C_{q,r} \|\bff\|_{H_\rho^{q}(\R^N,\PZ;H)}$ for all $\lambda \in \gamma_r$. In particular, $\|(\lambda+\bfA)^{-1}\bff\|_{L_{2}(\R^N,\PZ;D)} \le C_{
        0,r} \|\bff\|_{\bfH}$.
        \item \label{item:bfTSobolevBdd} $\|\bfT(t)\bff\|_{H_\rho^{q}(\R^N,\PZ;H)} \le C_{\bar{T},q} \|\bff\|_{H_\rho^{q}(\R^N,\PZ;H)}$ for all  $t \in [0,\bar{T}]$.
        \item \label{item:bfTanalyticEst} $\sup_{0 \le \tau \le t}\|\tau\bfA\bfT(\tau)\bff\|_{H_\rho^{q}(\R^N,\PZ;H)}\le C_{\bar{T},q}\|\bff\|_{H_\rho^{q}(\R^N,\PZ;H)}$ for all $t \in [0,\bar{T}]$.
    \end{enumerate}
\end{proposition}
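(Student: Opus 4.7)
The plan is to reduce each of \ref{item:bfAfSobolevnorm}--\ref{item:bfTanalyticEst} to the corresponding pointwise bound in Lemma \ref{lem:PointwiseEstimatesCoeffCond} by applying Leibniz' rule in $z$ and exploiting the factorization $\rho(z)^{\alpha/2}=\rho(z)^{\beta/2}\rho(z)^{(\alpha-\beta)/2}$ for all $\beta\leq\alpha$. This factorization is the whole point: the weight splits multiplicatively between the derivative falling on the $z$-dependent operator $M_z$ and the derivative falling on $\bff$, so that one side is controlled by Lemma \ref{lem:PointwiseEstimatesCoeffCond} and the other by the very definition of $H_\rho^q$.

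Concretely, for \ref{item:bfAfSobolevnorm}, with $M_z=A_z$ and $|\alpha|\leq q$, I would write
\begin{equation*}
    \rho(z)^{\alpha/2}\partial^\alpha\bigl(A_z\bff(z)\bigr)
    = \sum_{\beta\leq\alpha}\binom{\alpha}{\beta}\bigl(\rho(z)^{\beta/2}\partial^\beta A_z\bigr)\bigl(\rho(z)^{(\alpha-\beta)/2}\partial^{\alpha-\beta}\bff(z)\bigr),
\end{equation*}
take $\|\cdot\|_H$ of both sides, use Lemma \ref{lem:PointwiseEstimatesCoeffCond}\ref{item:pointwiseAzDerivativeBdd} to bound each factor $\rho(z)^{\beta/2}\partial^\beta A_z$ uniformly in $\calL(D,H)$, square, integrate against $\PZ$, and sum over $|\alpha|\leq q$. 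The same pattern proves \ref{item:ResolventBfH}--\ref{item:bfTanalyticEst} verbatim, only with different choices of $M_z$ and different pointwise inputs: take $M_z=(\lambda+A_z)^{-1}$ and use \ref{item:derivativeResolventHD} (whose bound lives in $\calL(H,D)$, which is exactly what produces the gain of domain regularity on the left of \ref{item:ResolventBfH}); take $M_z=T_z(t)$ and use \ref{item:pointwiseTzderivativesBdd}; and take $M_z=\tau A_zT_z(\tau)$ and use \ref{item:pointwiseAnalytic}. In each case the finitely many binomial coefficients and the supremum over partitions collapse into a single constant depending only on $q$ and on $r$ or $\bar T$, as required.

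The main technical obstacle is justifying that this formal Leibniz identity actually computes the weak derivative of $z\mapsto M_z\bff(z)$ inside $H_\rho^q(\R^N,\PZ;H)$ for arbitrary $\bff$ in this space, since a priori we only have distributional $z$-derivatives of $\bff$ available. I would handle this by density: first verify each identity classically on the dense subspace spanned by tensors $\bff=\sum_j\varphi_j\otimes u_j$ with $\varphi_j\in C_c^\infty(\R^N)$ and $u_j\in D$ (or $u_j\in H$ for \ref{item:ResolventBfH}--\ref{item:bfTanalyticEst}), where all derivatives exist in the strong sense and the required strong $C^\ell$-regularity of $z\mapsto M_z$ follows from Assumption \ref{ass:coeffCond} together with the Laplace inversion representation already exploited in the proof of Lemma \ref{lem:PointwiseEstimatesCoeffCond}. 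Once the desired estimate $\|M_\cdot\bff\|_{H_\rho^q(\R^N,\PZ;Y)}\leq C\|\bff\|_{H_\rho^q(\R^N,\PZ;X)}$ is established on this dense subspace, extending by continuity to all of $H_\rho^q(\R^N,\PZ;X)$ both yields the claimed estimate and automatically certifies that $M_\cdot\bff$ possesses the asserted weak $z$-derivatives, closing the argument.
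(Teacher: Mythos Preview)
Your proposal is correct and follows essentially the same route as the paper: the paper packages precisely your Leibniz--plus--pointwise--bound argument into a separate composition lemma (Lemma \ref{lem:Smoothness_composition}) and then applies it four times with the same choices of $M_z$ you list, invoking the same parts of Lemma \ref{lem:PointwiseEstimatesCoeffCond}. The only difference is that the paper abstracts the step you carry out inline.
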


We need a composition estimate to lift the pointwise estimates from Subsection \ref{subsec:pointwiseEst} to subspaces of $\bfH$.

\begin{lemma}
\label{lem:Smoothness_composition}
    Let $H_1$ and $H_2$ be Hilbert spaces, $\ell\in\N_0$, and $F\from \R^N\times H_1\to H_2$ such that $F(z,\cdot) \in \calL(H_1,H_2)$ for $\PZ$-almost every $z\in \R^N$ as well as $\PZ$-$\esssup_{z\in\R^N}\norm{F(z,\cdot)}_{\calL(H_1,H_2)} < \infty$. Further, suppose that $F(\cdot,u)\in C^\ell(\R^N;H_2)$ and there exists $c_\ell\geq 0$ such that $\|\rho(z)^{\alpha/2}\partial_z^\alpha F(z,u)\|_{H_2} \leq c_\ell\norm{u}_{H_1}$ for all $u\in H_1$, $\abs{\alpha}\leq \ell$, and $\PZ$-almost every $z\in\R^N$.
    Let $G\in H_\rho^\ell(\R^N,\PZ;H_1)$.   
    Then $F(\cdot,G(\cdot))\in H_\rho^\ell(\R^N,\PZ;H_2)$ and there exists $C_\ell\geq 0$ such that 
    \[
        \norm{F(\cdot,G(\cdot))}_{H_\rho^\ell(\R^N,\PZ;H_2)} \leq C_\ell \norm{G}_{H_\rho^\ell(\R^N,\PZ;H_1)}.
    \]
\end{lemma}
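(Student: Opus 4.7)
My plan is to exploit the linearity of $F(z,\cdot)$, which forces a Leibniz-type expansion of the weak derivatives of $z \mapsto F(z, G(z))$. Indeed, since $u \mapsto F(z,u)$ is linear, so is $u \mapsto (\partial_z^\beta F)(z,u)$ for every $\beta \in \calN$ with $|\beta|\le \ell$ (the pointwise hypothesis ensures that these derivatives exist and define bounded linear operators $H_1 \to H_2$ after multiplication by $\rho(z)^{\beta/2}$). Treating $F$ as a $z$-dependent linear map acting on the $z$-dependent vector $G(z)$, one expects, for smooth $G$, the identity
\begin{equation*}
    \partial_z^\alpha F(\cdot, G(\cdot))(z) = \sum_{\beta \le \alpha} \binom{\alpha}{\beta} (\partial_z^{\alpha-\beta} F)(z, \partial_z^\beta G(z)), \qquad |\alpha|\le \ell,
\end{equation*}
which I would first verify by induction on $|\alpha|$ for $G$ smooth with values in $H_1$.

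Assuming this identity, the estimate follows by a short computation. Writing $\rho(z)^{\alpha/2} = \rho(z)^{(\alpha-\beta)/2}\cdot \rho(z)^{\beta/2}$, distributing the factor $\rho(z)^{(\alpha-\beta)/2}$ into the first slot (where the derivative of $F$ sits) and $\rho(z)^{\beta/2}$ into the second (where $\partial_z^\beta G(z)$ sits, by linearity of $(\partial_z^{\alpha-\beta}F)(z,\cdot)$), the pointwise assumption yields
\begin{equation*}
    \bigl\|\rho(z)^{\alpha/2} \partial_z^\alpha F(z, G(z))\bigr\|_{H_2}
    \le c_\ell \sum_{\beta \le \alpha} \binom{\alpha}{\beta} \bigl\|\rho(z)^{\beta/2} \partial_z^\beta G(z)\bigr\|_{H_1}.
\end{equation*}
Squaring, applying the elementary inequality $(\sum_\beta a_\beta)^2 \le 2^{|\alpha|}\sum_\beta a_\beta^2$, summing over $|\alpha|\le \ell$, and integrating against $\PZ$ produces the desired bound $\|F(\cdot,G(\cdot))\|_{H_\rho^\ell(\R^N,\PZ;H_2)} \le C_\ell \|G\|_{H_\rho^\ell(\R^N,\PZ;H_1)}$ with $C_\ell$ depending only on $\ell$, $N$, and $c_\ell$.

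The main obstacle is to extend the Leibniz identity from smooth $G$ to general $G \in H^\ell_\rho(\R^N,\PZ;H_1)$, since only weak derivatives are available. I would address this by density: approximate $G$ by a sequence $(G_k)_k$ of smooth $H_1$-valued functions with $G_k \to G$ in $H_\rho^\ell(\R^N,\PZ;H_1)$ (using smoothness of the densities of $\PZ$ for the Gaussian, Beta, and Gamma cases, together with standard mollification combined with cutoffs tailored to the weight $\rho$). For the approximants, the Leibniz formula and the $L_2$-estimate above hold. The case $\alpha = 0$ of the hypothesis gives $\esssup_{z}\|F(z,\cdot)\|_{\calL(H_1,H_2)} \le c_\ell$ (independent of $\rho$), so $F(\cdot,G_k(\cdot)) \to F(\cdot, G(\cdot))$ in $L_2(\R^N,\PZ;H_2)$. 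The uniform bound in $H_\rho^\ell(\R^N,\PZ;H_2)$ yields, via a weak compactness argument and uniqueness of weak derivatives, that $F(\cdot, G(\cdot)) \in H_\rho^\ell(\R^N,\PZ;H_2)$ with the weak derivatives given by the expected Leibniz expression and obeying the same estimate in the limit. Passing to the limit inferior in the norm inequality then closes the argument.
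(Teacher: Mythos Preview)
Your proposal is correct and follows essentially the same route as the paper: both exploit the linearity of $F(z,\cdot)$ to reduce the chain rule to a Leibniz expansion, split the weight $\rho(z)^{\alpha/2}$ multiplicatively across the two factors, and invoke the pointwise hypothesis termwise. The paper phrases this as an induction on $\ell$ (sketching only $\ell=1$ via $\partial_2 F(z,\cdot)=F(z,\cdot)$), whereas you write the full multi-index Leibniz formula directly; you are also more explicit than the paper about passing from smooth $G$ to general $G\in H_\rho^\ell$ by density and weak compactness, a point the paper leaves implicit.
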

\begin{proof}
    By induction, the product rule and the chain rule, or more precisely, their generalizations Leibniz' rule and Faà di Bruno's formula, we obtain the assertion. We sketch the proof for $\ell=1$, since the induction step $\ell \to \ell +1$ follows analogously using the two rules.\\
    The induction start is an immediate consequence of  $\|F(z,u)\|_{H_2} \le c_0 \|u\|_{H_1}$ for $\PZ$-almost every $z \in \R^N$ taking $u=G(z) \in H_1$. 
    Since by assumption, $F(z,\cdot)$ is linear for $\PZ$-almost every $z \in \R^N$, we obtain $\partial_2 F(z,\cdot) = F(z,\cdot) \in \calL(H_1,H_2)$ for $\PZ$-almost every $z \in \R^N$ and higher derivatives vanish. For $\PZ$-almost every $z \in \R^N$, we can thus find $C \ge 0$ such that
    \begin{align*}
        \|\partial_z^{e_j} F(z,G(z))\|_{H_2}
        &= \|\partial_1^{e_j} F(z,G(z)) + [\partial_2 F(z,G(z))](\partial_z^{e_j} G(z))\|_{H_2}\\
        &\le \|\partial_1^{e_j} F(z,G(z))\|_{H_2} + \Big( \PZ\text{-}\esssup_{z \in \R^N}\|F(z,G(z))\|_{\calL(H_1,H_2)}\Big)\|\partial_z^{e_j} G(z)\|_{H_1}.
    \end{align*}
    Multiplying with $\rho(z)^{e_j/2}=(\rho(z))_j^{1/2}\in \R$, from the assumptions we can deduce
    \[
        \|\rho(z)^{e_j/2}\partial_z^{e_j} F(z,G(z))\|_{H_2} \le c_\ell \|G(z)\|_{H_1} + C \|\rho(z)^{e_j/2}\partial_z^{e_j} G(z)\|_{H_1}.
    \]
    Integrating in $z$, noting that $\rho(z)^0=1$, and summing over $j\in\{1,\ldots,N\}$ yields that, for some $C_\ell \ge 0$,
    \begin{align*}
        [F(\cdot,G(\cdot))]_{H_\rho^1(\R^N,\PZ;H_2)}^2 \le C_\ell \|G\|_{H_\rho^1(\R^N,\PZ;H_1)}^2
    \end{align*}
    for the seminorm $[\cdot]_{H_\rho^1(\R^N,\PZ;H_2)}$ in $H_\rho^1(\R^N,\PZ;H_2)$. The norm estimate follows from the induction assumption, i.e., for $\ell=1$ from the induction start.
\end{proof}

\begin{proof}[Proof of Proposition \ref{prop:SobolevEstBfA}]
    All statements are consequences of Lemma \ref{lem:Smoothness_composition} with $F$ chosen as below and $G=\bff$, which can be applied due to Lemma \ref{lem:PointwiseEstimatesCoeffCond}\ref{item:pointwiseAzDerivativeBdd}, \ref{item:derivativeResolventHD}, \ref{item:pointwiseTzderivativesBdd}, and \ref{item:pointwiseAnalytic}, respectively.
    \begin{enumerate}[label=(\alph*)]
        \item
            $F\from \R^N\times D\to H$, $F(z,u)\ce A_zu$
        \item 
            $F\from \R^N\times H\to D$, $F(z,u)\ce (\lambda+A_z)^{-1}u$
        \item 
            $F\from \R^N\times H\to H$, $F(z,u)\ce T_z(t)u$
        \item 
           $F\from \R^N\times H\to H$, $F(z,u)\ce \sup_{0 \le \tau \le t} \tau A_zT_z(\tau)u$ \qedhere
    \end{enumerate}
\end{proof}

\subsubsection{Estimate of the difference of resolvents}
\label{subsec:resolventDiffEst}

We now prove decay in $n$ of the difference of the resolvents of $-\bfA$ and $-\bfR_n\bfA\bfR_n$, which is the key ingredient for the convergence rate in randomness in Subsection \ref{subsec:TrotterKato}. To be able to speak of convergence, we henceforth consider approximation orders $n \in \N$ rather than $n \in \N_0$.

\begin{proposition}
\label{prop:resolventEstimate}
    Let $\ell \in \N$ and $r>0$. Suppose that Assumption \ref{ass:coeffCond} holds for $2\ell$. Then there exists $C_{\ell,r}\geq 0$ such that for all $\lambda\in\gamma_r$, we have 
    \begin{equation*}
        \|[(\lambda+\bfR_n\bfA\bfR_n)^{-1}\bfR_n-(\lambda+\bfA)^{-1}]\bff\|_\bfH \le C_{\ell,r}n^{-\ell} \|\bff\|_{H_\rho^{2\ell}(\R^N,\PZ;H)}
    \end{equation*}
    for all $n \in \N$ and $\bff \in H_\rho^{2\ell}(\R^N,\PZ;H)$.
\end{proposition}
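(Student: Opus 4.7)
The plan is to run a Galerkin-type argument in $\bfH$, where the ``Galerkin space'' is $\calP_n^N\otimes H$ and the ``interpolant'' is the PCE projection $\bfR_n$. Set $\bfu \ce (\lambda+\bfA)^{-1}\bff$ and $\bfu_n \ce (\lambda+\bfR_n\bfA\bfR_n)^{-1}\bfR_n\bff$, and decompose the error as
\[
\bfu_n - \bfu = (\bfu_n - \bfR_n\bfu) + (\bfR_n\bfu - \bfu).
\]
The second summand is the plain PCE error: by Corollary \ref{cor:PCEboundvector} applied with Hilbert space $H$, together with the resolvent regularity bound from Proposition \ref{prop:SobolevEstBfA}\ref{item:ResolventBfH}, one immediately obtains
\[
\|\bfR_n\bfu - \bfu\|_\bfH \le C\,n^{-\ell}\|\bfu\|_{H_\rho^{2\ell}(\R^N,\PZ;H)} \le C\,n^{-\ell}\|\bfu\|_{H_\rho^{2\ell}(\R^N,\PZ;D)} \le C_{\ell,r}\,n^{-\ell}\|\bff\|_{H_\rho^{2\ell}(\R^N,\PZ;H)},
\]
using the embedding $D\hra H$.

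The heart of the proof is bounding $\bfe_n \ce \bfu_n - \bfR_n\bfu \in \calP_n^N\otimes H$. Applying $\bfR_n$ to the identity $(\lambda+\bfA)\bfu=\bff$ gives $\bfR_n\bff = \lambda\bfR_n\bfu + \bfR_n\bfA\bfu$. Subtracting this from $(\lambda+\bfR_n\bfA\bfR_n)\bfu_n = \bfR_n\bff$ and noting that $\bfR_n^2=\bfR_n$ produces the error equation
\[
(\lambda+\bfR_n\bfA\bfR_n)\bfe_n = \bfR_n\bfA(I-\bfR_n)\bfu.
\]
(To justify that $\bfA\bfR_n\bfu$ makes sense, one verifies $\bfR_n\bfu \in \dom(\bfA)$: the PCE coefficients $\widehat{\bfu}_\alpha$ lie in $D$ since $\bfu\in L_2(\R^N,\PZ;D)$ by Proposition \ref{prop:SobolevEstBfA}\ref{item:ResolventBfH}, and then $\calP_n^N\otimes D\subseteq \dom(\bfA)$ via Lemma \ref{lem:PointwiseEstimatesCoeffCond}\ref{item:pointwiseAzDerivativeBdd}.) The family of restricted forms $(\bfform_n)_{n\in\N}$ has numerical range contained in that of $\bfform$, hence in $\Sigma_{\theta_0}$; thus $(\bfR_n\bfA\bfR_n)_{n\in\N}$ is uniformly sectorial on $\gamma_r$ and the resolvents satisfy $\|(\lambda+\bfR_n\bfA\bfR_n)^{-1}\|_{\calL(\bfH)} \le C/|\lambda| \le C/r$ uniformly in $n$ for $\lambda\in\gamma_r$. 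Combined with $\|\bfR_n\|_{\calL(\bfH)}\le 1$, this yields
\[
\|\bfe_n\|_\bfH \le \tfrac{C}{r}\,\|\bfA(I-\bfR_n)\bfu\|_\bfH.
\]

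To close the estimate, I would invoke Proposition \ref{prop:SobolevEstBfA}\ref{item:bfAfSobolevnorm} at regularity $q=0$ to move $\bfA$ past the PCE error at the cost of the $D$-norm:
\[
\|\bfA(I-\bfR_n)\bfu\|_\bfH \le C_0\,\|(I-\bfR_n)\bfu\|_{L_2(\R^N,\PZ;D)}.
\]
Now Corollary \ref{cor:PCEboundvector}, applied with the separable Hilbert space $D$ (equipped with its graph norm), gives $\|(I-\bfR_n)\bfu\|_{L_2(\R^N,\PZ;D)} \le C\,n^{-\ell}\|\bfu\|_{H_\rho^{2\ell}(\R^N,\PZ;D)}$, and a final application of Proposition \ref{prop:SobolevEstBfA}\ref{item:ResolventBfH} (at $q=2\ell$, which is legitimate since Assumption \ref{ass:coeffCond} holds for $2\ell$) bounds $\|\bfu\|_{H_\rho^{2\ell}(\R^N,\PZ;D)}$ by $C_{2\ell,r}\|\bff\|_{H_\rho^{2\ell}(\R^N,\PZ;H)}$. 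Adding the two contributions yields the claimed estimate.

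The key conceptual step — and the reason a direct ``apply $\bfR_n$ everywhere'' argument fails (cf.\ Remark \ref{rem:straightPCEfails}) — is the Galerkin-orthogonality identity for $\bfe_n$; the rest is a combination of sectoriality (to invert $\lambda+\bfR_n\bfA\bfR_n$ uniformly in $n$), the $D$-valued PCE error, and the fact that the resolvent regularises from $H$ into $D$ pointwise in $z$, lifted to Sobolev scales via the composition Lemma \ref{lem:Smoothness_composition}. No sharper estimate on $\gamma_r$ is needed because the exponential weight $\e^{t\lambda}$ that will later appear (when this resolvent bound is converted into a semigroup bound via the Dunford integral in Subsection \ref{subsec:TrotterKato}) takes care of integrability along the contour.
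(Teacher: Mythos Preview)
Your argument is correct and uses the same three ingredients as the paper: the uniform resolvent bound for $\bfR_n\bfA\bfR_n$ (Lemma~\ref{lem:RnResolventBounded} resp.\ sectoriality of $\bfform_n$), the vector-valued PCE estimate (Corollary~\ref{cor:PCEboundvector}), and the resolvent regularity of Proposition~\ref{prop:SobolevEstBfA}\ref{item:ResolventBfH}. The organisation differs slightly: the paper writes a single second-resolvent-type identity
\[
(\lambda+\bfR_n\bfA\bfR_n)^{-1}\bfR_n-(\lambda+\bfA)^{-1}=(\lambda+\bfR_n\bfA\bfR_n)^{-1}\bfR_n(\bfA-\bfR_n\bfA\bfR_n)(\lambda+\bfA)^{-1}
\]
and then invokes Lemma~\ref{lem:generatorEstimate}, whose proof already splits $\bfA-\bfR_n\bfA\bfR_n$ into $\bfA(I-\bfR_n)$ and $(I-\bfR_n)\bfA$. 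Your Galerkin decomposition $\bfu_n-\bfu=(\bfu_n-\bfR_n\bfu)+(\bfR_n\bfu-\bfu)$ with the error equation $(\lambda+\bfR_n\bfA\bfR_n)\bfe_n=\bfR_n\bfA(I-\bfR_n)\bfu$ produces precisely these two contributions, just separated from the outset; in particular you bypass Lemma~\ref{lem:generatorEstimate} by treating its two halves directly. The trade-off is negligible: the paper's route is a line shorter, while yours makes the Galerkin structure (and the role of $D$-valued PCE) more explicit.
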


Note that convergence of order $\ell$ requires regularity of order $2\ell$ in the sense of $\bff \in H_\rho^{2\ell}(\R^N,\PZ;H)$. Given sufficient regularity in $z$, the convergence is thus of arbitrarily high (polynomial) order. We write $C_\ell$ also for constants depending on the value of $2\ell$.
For the proof, we need a lemma on the difference of the generators. 
\begin{lemma}
\label{lem:generatorEstimate}
    Let $\ell \in \N$. Suppose that Assumption \ref{ass:coeffCond} holds for $2\ell$. Then there exists $C_\ell\geq 0$ such that for all $n\in\N$ and $\bff \in H_\rho^{2\ell}(\R^N,\PZ;D)$, we have
    \begin{equation*}
        \|(\bfR_n\bfA\bfR_n-\bfA)\bff\|_\bfH \le C_\ell n^{-\ell} \|\bff\|_{H_\rho^{2\ell}(\R^N,\PZ;D)}.
    \end{equation*}
\end{lemma}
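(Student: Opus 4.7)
The plan is to split the operator difference into two pieces via the elementary identity
\[
    \bfR_n\bfA\bfR_n - \bfA = \bfR_n\bfA(\bfR_n - I) + (\bfR_n - I)\bfA
\]
and to estimate each summand separately by combining the vector-valued PCE rate from Corollary \ref{cor:PCEboundvector} with the Sobolev mapping property of $\bfA$ established in Proposition \ref{prop:SobolevEstBfA}\ref{item:bfAfSobolevnorm}.

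First I would verify that both pieces are well-defined on $\bff \in H_\rho^{2\ell}(\R^N,\PZ;D) \subseteq L_2(\R^N,\PZ;D) = \dom(\bfA)$. Since $\bff$ has $D$-valued PCE coefficients $\widehat{\bff}_\alpha \in D$, the truncation $\bfR_n \bff = \sum_{|\alpha| \le n} \Phi_\alpha \otimes \widehat{\bff}_\alpha$ lies in $\calP_n^N \otimes D \subseteq \dom(\bfA)$, so $(\bfR_n - I)\bff \in \dom(\bfA)$ as well.

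For the second summand I would apply Corollary \ref{cor:PCEboundvector} with the Hilbert space $H$ to $\bfA\bff \in \bfH$, obtaining
\[
    \|(\bfR_n - I)\bfA \bff\|_\bfH \le C n^{-\ell}\|\bfA\bff\|_{H_\rho^{2\ell}(\R^N,\PZ;H)}
    \le C C_{2\ell}\, n^{-\ell}\|\bff\|_{H_\rho^{2\ell}(\R^N,\PZ;D)},
\]
where the last step uses Proposition \ref{prop:SobolevEstBfA}\ref{item:bfAfSobolevnorm} with $q = 2\ell$, which is permitted precisely because Assumption \ref{ass:coeffCond} holds at the level $2\ell$. For the first summand I would exploit that $\bfR_n$ is an $\bfH$-orthogonal projection, hence a contraction on $\bfH$, to bound
\[
    \|\bfR_n\bfA(\bfR_n - I)\bff\|_\bfH \le \|\bfA(\bfR_n - I)\bff\|_\bfH
    \le C_0 \|(\bfR_n - I)\bff\|_{L_2(\R^N,\PZ;D)},
\]
using Proposition \ref{prop:SobolevEstBfA}\ref{item:bfAfSobolevnorm} with $q = 0$. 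Now I would apply Corollary \ref{cor:PCEboundvector} with the Hilbert space $D$ (the corollary is stated for an arbitrary separable Hilbert space) to estimate
\[
    \|(\bfR_n - I)\bff\|_{L_2(\R^N,\PZ;D)} \le C n^{-\ell} \|\bff\|_{H_\rho^{2\ell}(\R^N,\PZ;D)}.
\]
Summing the two bounds yields the desired inequality with a suitable constant $C_\ell \ge 0$.

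The only subtle point I anticipate is making sure that Corollary \ref{cor:PCEboundvector} may be invoked with $D$ in place of $H$; this is fine because $D$, being the domain of $A_z^{\ols}$ ($\ols \ge 1$) equipped with the graph norm, is a separable Hilbert space independent of $z$ by Assumption \ref{ass:DAzDconstant}, which is built into Assumption \ref{ass:coeffCond}. Everything else is a clean pairing of the abstract PCE error bound with the composition estimate carrying $A_z$-smoothness from pointwise to Bochner--Sobolev level.
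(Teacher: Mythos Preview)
Your proposal is correct and follows essentially the same argument as the paper: the identical splitting $\bfR_n\bfA\bfR_n - \bfA = \bfR_n\bfA(\bfR_n - I) + (\bfR_n - I)\bfA$, with each piece estimated by combining the contractivity of $\bfR_n$, Proposition \ref{prop:SobolevEstBfA}\ref{item:bfAfSobolevnorm} (at levels $q=0$ and $q=2\ell$), and Corollary \ref{cor:PCEboundvector} applied once to $H$-valued and once to $D$-valued functions.
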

\begin{proof}
    Note that $\bfR_n$ is contractive on $\bfH$ (being an orthogonal projection). Proposition \ref{prop:SobolevEstBfA}\ref{item:bfAfSobolevnorm} applied to $r \in \{0,2\ell\}$ and the PCE error estimate from Corollary \ref{cor:PCEboundvector} applied to both $D$- and $H$-valued functions yield
    \begin{align*}
        \|\bfR_n\bfA\bfR_n\bff-\bfA\bff\|_\bfH &\le \|\bfR_n\bfA\bfR_n\bff- \bfR_n\bfA\bff\|_\bfH+\|\bfR_n\bfA\bff-\bfA\bff\|_\bfH \le \|\bfA(\bfR_n-I)\bff\|_\bfH+\|(\bfR_n-I)\bfA\bff\|_\bfH\\
        &\le C_0\|(\bfR_n-I)\bff\|_{L_2(\R^N,\PZ;D)}+\|(\bfR_n-I)\bfA\bff\|_\bfH\\
        &\le C n^{-\ell} \big(C_0\|\bff\|_{H_\rho^{2\ell}(\R^N,\PZ;D)}+\|\bfA\bff\|_{H_\rho^{2\ell}(\R^N,\PZ;H)} \big)\le C (C_0+C_{\ell})n^{-\ell} \|\bff\|_{H_\rho^{2\ell}(\R^N,\PZ;D)}. \qedhere
    \end{align*}
\end{proof}

\begin{proof}[Proof of Proposition \ref{prop:resolventEstimate}]
    The second resolvent identity and $\bfR_n^2=\bfR_n$ imply
    \begin{align*}
        (\lambda+\bfR_n\bfA\bfR_n)^{-1}\bfR_n-(\lambda+\bfA)^{-1}
        = (\lambda+\bfR_n\bfA\bfR_n)^{-1}\bfR_n(\bfA-\bfR_n\bfA\bfR_n)(\lambda+\bfA)^{-1}.
    \end{align*}
    Applying, in this order, Lemma \ref{lem:generatorEstimate}, Lemma \ref{lem:RnResolventBounded}, and Proposition \ref{prop:SobolevEstBfA}\ref{item:ResolventBfH} for $q=2\ell$ results in
    \begin{align*}
        \|[(\lambda+\bfR_n\bfA\bfR_n)^{-1}\bfR_n-(\lambda+\bfA)^{-1}]\bff\|_\bfH
        &\le \frac{C}{|\lambda|}\|(\bfA-\bfR_n\bfA\bfR_n)(\lambda+\bfA)^{-1}\bff\|_\bfH\\
        \le \frac{C}{r} C_\ell n^{-\ell}\|(\lambda+\bfA)^{-1}\bff\|_{H_\rho^{2\ell}(\R^N,\PZ;D)}
        &\le \frac{C}{r} C_\ell C_{\ell,r} n^{-\ell}\|\bff\|_{H_\rho^{2\ell}(\R^N,\PZ;H)}. \qedhere
    \end{align*}
\end{proof}

\subsubsection{Error estimate in randomness: A Trotter--Kato argument}
\label{subsec:TrotterKato}

Finally, we conclude convergence of the approximate mild solutions via strong convergence of the associated semigroups. To show the latter, we prove a suitable adaptation of the quantified version of the Trotter--Kato theorem \cite[Proposition~2.3a]{kappel1}. It relates the convergence rate for the resolvents from the last subsection to a convergence rate for the semigroups. 

\begin{theorem}
\label{thm:errorunuTrotterKato}
    Let $\ell \in \N$. Suppose that Assumption \ref{ass:coeffCond} holds for $2\ell$. Then, for any $\bar{T}>0$, there exists a constant $C_{\bar{T},\ell}>0$ such that
    \[
        \|\bfT_n(t)\bfR_n\bff-\bfT(t)\bff\|_\bfH \le C_{\bar{T},\ell} n^{-\ell} \|\bff\|_{H_\rho^{2\ell}(\R^N,\PZ;D)}
    \]
    for all $\bff \in H_\rho^{2\ell}(\R^N,\PZ;D)$, $n \in \N$, and $t \in [0,\bar{T}]$. In particular, for the mild solutions $\bfu_n(t)$ and $\bfu(t)$ of \eqref{eq:defunACP} and \eqref{eq:randomACP} with $\bfu_0 \in H_\rho^{2\ell}(\R^N,\PZ;D)$, respectively, we have
    \[
        \|\bfu_n(t)-\bfu(t)\|_\bfH \le C_{\bar{T},\ell} n^{-\ell} \|\bfu_0\|_{H_\rho^{2\ell}(\R^N,\PZ;D)} \quad (t \in [0,\bar{T}]).
    \]
\end{theorem}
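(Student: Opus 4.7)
The plan is to adapt the classical Trotter--Kato strategy of controlling the semigroup difference via the Laplace inversion formula, converting the resolvent difference estimate from Proposition~\ref{prop:resolventEstimate} into a semigroup difference estimate. By Assumption~\ref{ass:formUnifBdCoercive} and Lemma~\ref{lem:RnResolventBounded}, both $-\bfA$ and $-\bfR_n\bfA\bfR_n$ are m-sectorial with a uniform angle of sectoriality and thus generate analytic semigroups. Hence, for any $r>0$ and $t>0$, one has the contour integral representations
\begin{align*}
\bfT(t)\bff &= \frac{1}{2\pi \i}\int_{\gamma_r} \e^{t\lambda}(\lambda + \bfA)^{-1}\bff\,\rmd\lambda,\\
\bfT_n(t)\bfR_n\bff &= \frac{1}{2\pi \i}\int_{\gamma_r} \e^{t\lambda}(\lambda + \bfR_n\bfA\bfR_n)^{-1}\bfR_n\bff\,\rmd\lambda,
\end{align*}
where the second identity uses that $\bfR_n \in \calL(\bfH)$ commutes with the contour integral.

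Subtracting the two representations, I would apply the resolvent difference estimate from Proposition~\ref{prop:resolventEstimate}, but opening up its proof to keep the $1/|\lambda|$ factor arising from the $\calL(\bfH)$-bound on $(\lambda+\bfR_n\bfA\bfR_n)^{-1}$ rather than estimating it by $1/r$. This yields
\[
\bigl\|\bigl[(\lambda + \bfR_n\bfA\bfR_n)^{-1}\bfR_n - (\lambda + \bfA)^{-1}\bigr]\bff\bigr\|_\bfH \le \frac{C_{\ell,r}}{|\lambda|}\, n^{-\ell}\,\|\bff\|_{H^{2\ell}_\rho(\R^N,\PZ;H)}
\]
for $\lambda \in \gamma_r$. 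Choosing the $t$-dependent contour $r = 1/t$ for each fixed $t \in (0,\bar{T}]$, the integral $\int_{\gamma_{1/t}} \frac{|\e^{t\lambda}|}{|\lambda|}\,|\rmd\lambda|$ is bounded by a constant independent of $t$, via the same piecewise estimates along $\gamma_{1/t}^1,\gamma_{1/t}^2,\gamma_{1/t}^3$ carried out in the proof of Lemma~\ref{lem:PointwiseEstimatesCoeffCond}\ref{item:pointwiseTzderivativesBdd}. Simultaneously, $C_{\ell,1/t}$ stays bounded on $(0,\bar{T}]$ since its dependence on $r$ is polynomial in $1/r = t$ (recall $C_{0,r} = 2 + 1/r$ and the Leibniz-type expansion used in the proof of Lemma~\ref{lem:PointwiseEstimatesCoeffCond}). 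Combined with $\|\bff\|_{H^{2\ell}_\rho(\R^N,\PZ;H)} \le \|\bff\|_{H^{2\ell}_\rho(\R^N,\PZ;D)}$, this yields the desired estimate on $(0,\bar{T}]$.

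The case $t = 0$ is handled separately: then $\bfT_n(0)\bfR_n\bff - \bfT(0)\bff = \bfR_n\bff - \bff$, and Corollary~\ref{cor:PCEboundvector} applied to $\bff$ viewed as an $H$-valued function (via $D\hra H$) directly gives the bound $C n^{-\ell}\|\bff\|_{H^{2\ell}_\rho(\R^N,\PZ;D)}$. The ``in particular'' statement then follows by taking $\bff = \bfu_0$, since $\bfT_n(t)\bfR_n\bfu_0 = \bfT_n(t)\bfu_{0n} = \bfu_n(t)$ and $\bfT(t)\bfu_0 = \bfu(t)$. The main technical obstacle is not conceptual but a matter of careful bookkeeping: the resolvent difference must be used in its sharper $1/|\lambda|$ form so that the contour integral along $\gamma_{1/t}$ remains finite as $t \to 0^+$, and then the growth of this contour integral has to be balanced against the polynomial dependence of $C_{\ell,1/t}$ on $t$ --- precisely the balance achieved by the choice $rt = 1$.
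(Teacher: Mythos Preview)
Your approach is correct and takes a genuinely different route from the paper's. The paper follows the Ito--Kappel argument: it defines the error $\bfe_n(t)=[\bfT_n(t)\bfR_n-\bfR_n\bfT(t)]\bff$, scales it by $t$, derives an ODE, applies variation of constants, integrates by parts, and splits into five terms $E_1,\ldots,E_5$ that are estimated individually (the $E_1$ term exploiting self-adjointness of $(\bfR_n\bfA\bfR_n)^{-1}$). At the end it adds the PCE error $\|(\bfR_n-I)\bfT(t)\bff\|_\bfH$ via Corollary~\ref{cor:PCEboundvector}.

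Your contour-integral argument is considerably more direct and, notably, delivers the bound already in terms of $\|\bff\|_{H^{2\ell}_\rho(\R^N,\PZ;H)}$ rather than the stronger $D$-valued norm; the paper's own remark after the theorem observes that its method cannot avoid the $D$-norm because of the terms $E_1$--$E_4$. Your approach also does not use symmetry of the forms, whereas the paper's $E_1$ estimate does. The one point where the paper's machinery is more robust is that the Ito--Kappel framework is designed to handle non-analytic $C_0$-semigroups as well; here analyticity is available, so your direct Laplace-inversion route is the more economical choice. Your bookkeeping is right: the uniform sectoriality of $\bfR_n\bfA\bfR_n$ on $\gamma_r$ (including the zero-extension on $(\calP_n^N\otimes H)^\perp$, where the resolvent is just $\lambda^{-1}$) gives the $M/|\lambda|$ bound on the whole contour, and $C_{\ell,1/t}$ is indeed polynomial in $t$ and hence bounded on $(0,\bar{T}]$.
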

\begin{proof}
    This proof is inspired by \cite[Proposition~2.3a]{kappel1} for the deterministic case. Let $\bff \in H_\rho^{2\ell}(\R^N,\PZ;D) \seq \dom(\bfA)$. Recall that for $n\in\N$ the semigroup $(\bfT_n(t))_{t \ge 0}$ is generated by $-\bfR_n\bfA\bfR_n$. Consider the error $\bfe_n(t) \ce [\bfT_n(t)\bfR_n-\bfR_n\bfT(t)]\bff$. Then, the scaled error $\bfs_n(t) \ce t \bfe_n(t)$ satisfies
    \begin{align*}
        \bfs_n'(t) = -\bfR_n\bfA\bfR_n \bfs(t)+\bfe_n(t)+t\bfR_n\bfA\bfR_n \Delta_n \bfA\bfT(t)\bff+t\,\Box_n\bfT(t)\bff\quad(t>0),\quad \bfs_n(0) = 0,
    \end{align*}
    where we have used $\bfR_n^2=\bfR_n$ and set 
    \begin{align*}
        \Delta_n &\ce \Delta_n(\kappa) \ce \left(\frac{\kappa}{2}+\bfA\right)^{-1}-\left(\frac{\kappa}{2}+\bfR_n\bfA\bfR_n\right)^{-1}\bfR_n,\\
        \Box_n &\ce \Box_n(\kappa) \ce \frac{\kappa}{2}\Big[ \Big(\frac{\kappa}{2}+\bfR_n\bfA\bfR_n\Big)^{-1}\bfR_n\bfA - \bfR_n\bfA\bfR_n\Big(\frac{\kappa}{2}\bfA\Big)^{-1} \Big],
    \end{align*}
    noting that $\frac{\kappa}{2}$ is in the respective resolvent sets by coercivity of $\bfform$ and $\bfform_n$.
    Thus, $\bfs_n$ is given by the variation-of-constants formula
    \begin{align*}
        \bfs_n(t) &= \int_0^t \bfT_n(t-\tau) \bfe_n(\tau) \,\rmd \tau 
        -\int_0^t \bfT_n(t-\tau)\bfR_n\bfA\bfR_n \Delta_n \tau\bfA\bfT(\tau)\bff\,\rmd \tau
        +\int_0^t \bfT_n(t-\tau)\tau\,\Box_n\bfT(\tau)\bff \,\rmd \tau.
    \end{align*}
    Integration by parts in the second integral and division by $t$ result in
    \begin{align*}
        \bfe_n(t) &= \frac{1}{t} \int_0^t \bfT_n(t-\tau) \bfe_n(\tau)\,\rmd \tau -  \Delta_n \bfA \bfT(t)\bff+ \frac{1}{t} \int_0^t \bfT_n(t-\tau)  \Delta_n \bfA \bfT(\tau)\bff\,\rmd \tau\\
        &\phantom{= }+ \frac{1}{t} \int_0^t \bfT_n(t-\tau)  \Delta_n \tau \bfA^2 \bfT(\tau)\bff\,\rmd \tau +\frac{1}{t}\int_0^t \bfT_n(t-\tau) \tau \,\Box_n \bfT(\tau)\bff\,\rmd\tau  \ec E_1+E_2+E_3+E_4+E_5,
    \end{align*}
    where $E_j \ce E_j(t,n)$ for $j \in \{1,\ldots,5\}$. We bound the norm of all five terms separately, starting with the second one.
    The resolvent difference estimate of Proposition \ref{prop:resolventEstimate} for $r=\frac{\kappa}{2}$, the boundedness of the semigroup on $H_\rho^{2\ell}(\R^N,\PZ;H)$ from Proposition \ref{prop:SobolevEstBfA}\ref{item:bfTSobolevBdd}, and the generator bound from Proposition \ref{prop:SobolevEstBfA}\ref{item:bfAfSobolevnorm} imply
    \begin{align}
    \label{eq:E2estimate}
        \|E_2\|_\bfH &\le \|\Delta_n\|_{\calL(H_\rho^{2\ell}(\R^N,\PZ;H),\bfH)} \|\bfA\bfT(t)\bff\|_{H_\rho^{2\ell}(\R^N,\PZ;H)}\le C_{\ell,\kappa}n^{-\ell} \|\bfT(t)\bfA\bff\|_{H_\rho^{2\ell}(\R^N,\PZ;H)}\nonumber\\
        &\le C_{\ell,\kappa}C_{\bar{T},\ell} n^{-\ell} \|\bfA\bff\|_{H_\rho^{2\ell}(\R^N,\PZ;H)}\le C_{\ell,\kappa}C_{\bar{T},\ell}  C_{\ell} n^{-\ell}  \|\bff\|_{H_\rho^{2\ell}(\R^N,\PZ;D)},
    \end{align}
    where we have written $C_{\ell,\kappa}$ for $C_{\ell,r}$ with $r=\frac{\kappa}{2}$. Using the contractivity of $(\bfT_n(t))_{t \ge 0}$ from Lemma \ref{lem:RnResolventBounded} allows us to proceed as for $E_2$ to obtain
    \begin{align}
    \label{eq:E3estimate}
        \|E_3\|_\bfH &\le \frac{1}{t} \int_0^t \|\Delta_n\|_{\calL(H_\rho^{2\ell}(\R^N,\PZ;H),\bfH)} \|\bfA\bfT(\tau)\bff\|_{H_\rho^{2\ell}(\R^N,\PZ;H)}\,\rmd \tau
        \le C_{\ell,\kappa}C_{\bar{T},\ell}  C_{\ell} n^{-\ell}\|\bff\|_{H_\rho^{2\ell}(\R^N,\PZ;D)}.
    \end{align}
    To estimate the fourth error term, combining the arguments issued for $E_3$ with the analyticity estimate from Proposition \ref{prop:SobolevEstBfA}\ref{item:bfTanalyticEst} yields
    \begin{align}
    \label{eq:E4estimate}
        \|E_4\|_\bfH &\le C_{\ell,\kappa} n^{-\ell}\cdot\frac{1}{t}\int_0^t  \|\tau\bfA\bfT(\tau)\bfA\bff\|_{H_\rho^{2\ell}(\R^N,\PZ;H)}\,\rmd \tau\le C_{\ell,\kappa} C_{\bar{T},\ell} n^{-\ell}  \|\bfA\bff\|_{H_\rho^{2\ell}(\R^N,\PZ;H)}\nonumber\\
        &\le C_{\ell,\kappa} C_{\bar{T},\ell} C_{\ell} n^{-\ell} \|\bff\|_{H_\rho^{2\ell}(\R^N,\PZ;D)}.
    \end{align}
    Let $0\le \tau\le \bar{T}$. Applying, in this order, the triangle inequality, Lemma \ref{lem:generatorEstimate}, Proposition \ref{prop:resolventEstimate}, and Proposition \ref{prop:SobolevEstBfA}\ref{item:ResolventBfH}, \ref{item:bfTSobolevBdd}, and \ref{item:bfAfSobolevnorm}, we obtain the estimate
    \begin{align}
    \label{eq:boxnEstimate}
        \|\Box_n \bfT(\tau)\bff\|_\bfH
        &\le \frac{\kappa}{2} \Big\|[\bfR_n\bfA\bfR_n -\bfA]\Big(\frac{\kappa}{2}+\bfA\Big)^{-1}\bfT(\tau)\bff \Big\|_\bfH + \frac{\kappa}{2} \|\Delta_n\bfT(\tau)\bfA\bff \|_\bfH\nonumber\\
        &\le \frac{\kappa}{2} C_{\ell,\kappa} C_{\bar{T},\ell} C_\ell n^{-\ell} \big(\|\bff\|_{H_\rho^{2\ell}(\R^N,\PZ;H)} + \|\bff\|_{H_\rho^{2\ell}(\R^N,\PZ;D)}\big)\nonumber\\
        &\le \kappa C_{\ell,\kappa} C_{\bar{T},\ell} C_\ell n^{-\ell} \|\bff\|_{H_\rho^{2\ell}(\R^N,\PZ;D)}.
    \end{align}
    Consequently, we can bound the fifth term by
    \begin{align}
    \label{eq:E5estimate}
        \|E_5\|_\bfH &\le \frac{1}{t}\int_0^t  \|\tau\,\Box_n\bfT(\tau)\bff\|_{\bfH}\,\rmd \tau\le \frac{\bar{T}}{2} \kappa C_{\ell,\kappa} C_{\bar{T},\ell} C_\ell n^{-\ell} \|\bff\|_{H_\rho^{2\ell}(\R^N,\PZ;D)}.
    \end{align}
    To estimate the remaining term $E_1$, we first note that $\bfe_n$ satisfies the ODE $\bfe_n(0)=0$,
    \begin{equation}
    \label{eq:ODEbfen}
        \bfe_n'(t) = -\bfR_n\bfA\bfR_n\bfe_n(t)-\bfR_n\bfA\bfR_n\Delta_n \bfA \bfT(t)\bff+\Box_n \bfT(t)\bff\quad (0<t\le \bar{T}),
    \end{equation}
    thus giving rise to the error representation for $0<\tau \le \bar{T}$
    \begin{align}
    \label{eq:enWithAninv}
        \bfe_n(\tau) = -(\bfR_n\bfA\bfR_n)^{-1} \bfe_n'(\tau)-\Delta_n \bfA \bfT(\tau)\bff+(\bfR_n\bfA\bfR_n)^{-1}\Box_n \bfT(\tau)\bff.
    \end{align}
   Furthermore, the $\PZ$-almost sure symmetry of $(\form_z)_{z \in \R^N}$ implies that $-(\bfR_n\bfA\bfR_n)^{-1}$ is self-adjoint. Thus, for $0 <\tau \le \bar{T}$,
    \begin{align}
    \label{eq:selfadjAninv}
        \Re \,\sp{ \bfe_n(\tau)}{-(\bfR_n\bfA\bfR_n)^{-1} \bfe_n'(\tau) }_\bfH = \frac{1}{2} \frac{\rmd}{\rmd \tau} \sp{ \bfe_n(\tau)}{-(\bfR_n\bfA\bfR_n)^{-1}\bfe_n(\tau)}_\bfH.
    \end{align}
    As a consequence of the coercivity of $\bfform_n$, $-\bfR_n\bfA\bfR_n$ is dissipative and thus also its inverse. Integrating the scalar product of $\bfe_n(\tau)$ with both sides of \eqref{eq:enWithAninv}, using \eqref{eq:selfadjAninv}, the dissipativity of $-(\bfR_n\bfA\bfR_n)^{-1}$, the Cauchy--Schwarz inequality, and Hölder's inequality, we deduce for $t \in (0, \bar{T}]$
    \begin{align*}
        \|\bfe_n\|_{L_2(0,t;\bfH)}^2 
        &= \frac12 \langle \bfe_n(t),-(\bfR_n\bfA\bfR_n)^{-1} \bfe_n(t) \rangle_\bfH
        +\int_0^t \langle \bfe_n(\tau),-\Delta_n \bfA \bfT(\tau)\bff \rangle_\bfH\,\rmd \tau\\
        &\phantom{\le }
        +\int_0^t \langle \bfe_n(\tau),(\bfR_n\bfA\bfR_n)^{-1}\Box_n \bfT(\tau)\bff \rangle_\bfH\,\rmd \tau\\
        &\le \|\bfe_n\|_{L_2(0,t;\bfH)} \big(\|\Delta_n \bfA \bfT(\cdot)\bff\|_{L_2(0,t;\bfH)} + \|(\bfR_n\bfA\bfR_n)^{-1}\Box_n \bfT(\cdot)\bff\|_{L_2(0,t;\bfH)}\big).
    \end{align*}
    Dividing by the norm of $\bfe_n$ and using the estimates \eqref{eq:E2estimate} and \eqref{eq:boxnEstimate} yields
    \begin{align*}
        \|\bfe_n\|_{L_2(0,t;\bfH)} 
        &\le 2\sqrt{t} C_{\ell,\kappa} C_{\bar{T},\ell}C_\ell n^{-\ell} \|\bff\|_{H_\rho^{2\ell}(\R^N,\PZ;D)},
    \end{align*}
    where we have used that $\|(\bfR_n\bfA\bfR_n)^{-1}\|_{\calL(\bfH)} \le \frac{1}{\kappa}$, which is an immediate consequence of the coercivity of $\bfform_n$. Finally, with the Cauchy--Schwarz inequality and uniform contractivity of $\bfT_n(t)$, this implies
    \begin{align}
    \label{eq:E1estimate}
        \|E_1\|_\bfH &= \bigg \| \frac{1}{t} \int_0^t \bfT_n(t-\tau) \bfe_n(\tau) \,\rmd \tau \bigg\|_\bfH \le \frac{1}{t} \|\bfT_n\|_{L_2(0,t;\calL(\bfH))} \|\bfe_n\|_{L_2(0,t;\bfH)}\nonumber\\
        &\le 2 C_{\ell,\kappa} C_{\bar{T},\ell}C_\ell n^{-\ell} \|\bff\|_{H_\rho^{2\ell}(\R^N,\PZ;D)}.
    \end{align}
    Combining \eqref{eq:E1estimate} with \eqref{eq:E2estimate}-\eqref{eq:E5estimate} and omitting the dependence on $\kappa$ results in
    \begin{equation*}
        \|\bfe_n(t)\|_\bfH \le C_{\bar{T},\ell}n^{-\ell} \|\bff\|_{H_\rho^{2\ell}(\R^N,\PZ;D)}.
    \end{equation*}
    Finally, the PCE estimate of Corollary \ref{cor:PCEboundvector} and Proposition \ref{prop:SobolevEstBfA}\ref{item:bfTSobolevBdd} give the desired estimate
    \begin{align*}
        \|\bfT_n(t)\bfR_n\bff-\bfT(t)\bff\|_\bfH 
        &\le \|\bfe_n(t)\|_\bfH + \|\bfR_n-I\|_{\calL(H_\rho^{2\ell}(\R^N,\PZ;H),\bfH)} \|\bfT(t)\bff\|_{H_\rho^{2\ell}(\R^N,\PZ;H)}\\
        &\le C_{\bar{T},\ell}n^{-\ell} \|\bff\|_{H_\rho^{2\ell}(\R^N,\PZ;D)} + C_{\bar{T},\ell} n^{-\ell}\|\bff\|_{H_\rho^{2\ell}(\R^N,\PZ;H)}, 
    \end{align*}
    from which the second inequality is obtained for $\bff=\bfu_0$.
\end{proof}

\begin{remark}
    In \eqref{eq:E5estimate}, the estimate of $E_5$ can be improved to an estimate in terms of the weaker norm $\|\bff\|_{H_\rho^{2\ell}(\R^N,\PZ;H)}$ by using Proposition \ref{prop:SobolevEstBfA}\ref{item:bfTanalyticEst} to estimate $\tau$ times the second term in \eqref{eq:boxnEstimate}. However, this does not lead to a stronger result due to the remaining terms $E_1$ to $E_4$.
\end{remark}

\subsection{Joint convergence rate of the full discretization error}
\label{subsec:fullErrorEstimate}

The last step required for an estimate of the full discretization error is to relate the graph norm of $\fraku_{0n}$ to the norm of $\bfu_0$ in a suitable subspace of $\bfH$ in order to make use of the semi-discretization error estimate in space-time of Subsection \ref{subsec:randomSpaceTime}.

 \begin{lemma}
\label{lem:DfrakAnalpha}
   Suppose that Assumption \ref{ass:coeffCond} holds for some $\ols\geq 1$ and $\ell=0$. 
    Let $0<q\le \ols$, $n \in \N_0$, $\bfw \in \bfH$, and $\frakw_n \ce (\widehat{\bfw}_\gamma)_{|\gamma|\le n}$. 
   Set $D^q\ce \dom(A_z^q)$. Then for $\bfw\in L_2(\R^N,\PZ;D^q)$, we have $\frakw_n\in \dom(\frakA_n^q)$ and there exists $C_q\geq 0$ independent of $\bfw$ and $n$ such that 
    \[
        \norm{\frakw_n}_{\frakA_n^q} \leq C_q \norm{\bfw}_{L_2(\R^N,\PZ;D^q)}.
    \]
\end{lemma}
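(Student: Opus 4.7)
The plan is to regard $\frakA_n$ as a compression of $\bfA$ and reduce the assertion to an operator-theoretic interpolation estimate. Let $\iota_n\from \frakH_n\to\bfH$ denote the canonical linear isometry $(u_\beta)_{|\beta|\le n}\mapsto\sum_{|\beta|\le n}\Phi_\beta\ot u_\beta$ with range $\calP_n^N\ot H$; its adjoint is the contraction $\iota_n^*\bfw=(\widehat{\bfw}_\gamma)_{|\gamma|\le n}=\frakw_n$ and satisfies $\iota_n\iota_n^*=\bfR_n$. The same PCE computation producing \eqref{eq:bfformnFinite}, combined with the identification of $\frakA_n$ with the restriction of $\bfR_n\bfA\bfR_n$ used in Subsection \ref{subsec:PCEapproxPb}, shows that $\frakA_n=\iota_n^*\bfA\iota_n$ on the natural domain. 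Symmetry of $\bfform$ makes $\bfA$ self-adjoint on $\bfH$, so by the spectral theorem $\bfA^q$ acts as multiplication by $A_z^q$; Assumption \ref{ass:DAzDconstant} together with Remark \ref{rem:graph_norm} then yields the norm equivalence $\|\bfA^q\bfw\|_\bfH\asymp\|\bfw\|_{L_2(\R^N,\PZ;D^q)}$ for all $q\in(0,\ols]$, uniformly in $\bfw$. The claim therefore reduces to the operator bound $\|\frakA_n^q\iota_n^*\bfw\|_{\frakH_n}\le C_q\|\bfA^q\bfw\|_\bfH$ (the $\|\frakw_n\|_{\frakH_n}$-part of the graph norm is absorbed via the continuous embedding $D^q\hra H$ and $\|\frakw_n\|_{\frakH_n}\le\|\bfw\|_\bfH$).

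For the base case $q=1$, contractivity of $\bfR_n$ on $\bfH$ and on $L_2(\R^N,\PZ;D)$ (by Remark \ref{rem:PCE_subspace}) together with Proposition \ref{prop:SobolevEstBfA}\ref{item:bfAfSobolevnorm} (applied with $q=0$, which is available because Assumption \ref{ass:coeffCond} is assumed with $\ell=0$) yield
\[
\|\frakA_n\frakw_n\|_{\frakH_n}=\|\bfR_n\bfA\bfR_n\bfw\|_\bfH\le\|\bfA\bfR_n\bfw\|_\bfH\le C_0\|\bfR_n\bfw\|_{L_2(\R^N,\PZ;D)}\le C_0\|\bfw\|_{L_2(\R^N,\PZ;D)},
\]
so $\|\frakA_n\iota_n^*\bfA^{-1}\|_{\calL(\bfH,\frakH_n)}\le M_1$ uniformly in $n$ (and trivially $\|\iota_n^*\|_{\calL(\bfH,\frakH_n)}\le 1$). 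The Heinz inequality for the non-negative self-adjoint operators $\bfA$, $\frakA_n$ and the bounded operator $\iota_n^*$ then gives $\|\frakA_n^q\iota_n^*\bfw\|_{\frakH_n}\le M_1^q\|\bfA^q\bfw\|_\bfH$ for every $q\in[0,1]$. To reach $q>1$, I would iterate the base case: for integer $k\in\{2,\dots,\lfloor\ols\rfloor\}$, contractivity of $\bfR_n$ on $L_2(\R^N,\PZ;D^j)$ combined with uniform boundedness of $A_z$ as a map $D^{j+1}\to D^j$ (both coming from Assumption \ref{ass:DAzDconstant} and Remark \ref{rem:graph_norm}) inductively produces $\|\frakA_n^k\iota_n^*\bfw\|_{\frakH_n}\le M_k\|\bfA^k\bfw\|_\bfH$. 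A second application of Heinz, or equivalently Stein's complex interpolation of the analytic family $\zeta\mapsto\frakA_n^\zeta\iota_n^*\bfA^{-\zeta}$ on the strip $\Re\zeta\in[0,k]$ (using that purely imaginary powers of self-adjoint operators are unitary), then extends the estimate to all $q\in[0,k]$. The remaining range $q\in(\lfloor\ols\rfloor,\ols]$ is handled by an analogous interpolation on the strip $\Re\zeta\in[\lfloor\ols\rfloor,\ols]$, the right endpoint being available because the common domain $D^\ols$ is given by Assumption \ref{ass:DAzDconstant}.

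The principal obstacle is that $\iota_n^*$ does not commute with $\bfA$: the truncated PCE of $\bfA^q\bfw$ differs in general from $\frakA_n^q$ applied to the truncated PCE of $\bfw$, so for non-integer $q$ a direct componentwise computation is not available. The Heinz inequality is precisely the operator-theoretic tool that transfers integer base cases (accessible by iteration of Proposition \ref{prop:SobolevEstBfA}\ref{item:bfAfSobolevnorm}) to fractional powers while preserving the right-hand side in the natural norm $\|\bfw\|_{L_2(\R^N,\PZ;D^q)}$. Uniformity of all constants in $n$ propagates through the argument because the bounds for $\iota_n^*$ and for the compression $\frakA_n\iota_n^*\bfA^{-1}$ (together with its integer iterates) are themselves independent of $n$.
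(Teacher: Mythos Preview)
Your proof is correct and follows essentially the same approach as the paper: establish the integer base cases by iterating the identity $\|\frakA_n\frakw_n\|_{\frakH_n}=\|\bfR_n\bfA\bfR_n\bfw\|_\bfH$ together with Proposition \ref{prop:SobolevEstBfA}\ref{item:bfAfSobolevnorm}, then interpolate via the Heinz inequality (using self-adjointness of $\frakA_n$ from Proposition \ref{prop:frakform_n_symmetric}) to reach fractional $q$. Your framing via the isometry $\iota_n$ and the explicit mention of Stein's complex interpolation are harmless elaborations of the same argument the paper gives.
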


\begin{proof}
    (i)
    We first let $q=1$. From $\dom(\bfA)=L_2(\R^N,\PZ;D)$ we conclude $\bfR_n\bfw \in \dom(\bfA) \seq \bfV$ and $\frakw_n \in \frakV_n$. Let $\frakv_n =(v_\gamma)_{|\gamma|\le n} \in \frakV_n$ and define $\bfv_n \ce \sum_{|\gamma|\le n} v_\gamma \Phi_\gamma \in \calP_n^N \ot V$. Then, from \eqref{eq:frakformIsBfform}, $\bfform_n \sim \bfR_n\bfA\bfR_n$, and $\bfR_n^2=\bfR_n$ we conclude
    \begin{align*}
        \frakform_n(\frakw_n,\frakv_n) &= \bfform_n(\bfR_n\bfw, \bfv_n) = \sp{\bfR_n\bfA\bfR_n^2\bfw}{\bfv_n}_\bfH= \sp{\bfR_n\bfA\bfR_n\bfw}{\bfv_n}_{\calP_n^N \ot H} = \sp{((\widehat{\bfR_n\bfA\bfR_n\bfw})_\gamma)_{|\gamma|\le n}}{\frakv_n}_{\frakH_n}
    \end{align*}
     by Plancherel's theorem.
    Hence, $\frakw_n \in \dom(\frakA_n)$ and $\frakA_n \frakw_n = ((\widehat{\bfR_n\bfA\bfR_n\bfw})_\gamma)_{|\gamma|\le n}$. Applying Parseval's identity twice and Proposition \ref{prop:SobolevEstBfA}\ref{item:bfAfSobolevnorm} with $q=0$, we conclude
    \begin{align*}
        \norm{\frakA_n \frakw_n}_{\frakH_n} &= \norm{\bfR_n \bfA\bfR_n \bfw}_{\bfH} \leq \norm{\bfA\bfR_n \bfw}_{\bfH} \le C \|\bfR_n\bfw\|_{L_2(\R^N,\PZ;D)}
        \le C \|\bfw\|_{L_2(\R^N,\PZ;D)}.
    \end{align*}
    (ii) Let $\N \ni m \le \ols$. Repeating the argument of step (i), we obtain $\frakw_n \in \dom(\frakA_n^m)$ for all $\bfw \in \dom(\bfA^m)$ and for some $C_m \ge 0$,
    \begin{equation}
    \label{eq:frakAnmpower}
        \|\frakA_n^m \frakw_n\|_{\frakH_n}  = \|(\bfR_n \bfA \bfR_n)^m \bfw\|_\bfH \leq C_m \|\bfw\|_{\dom(\bfA^m)}.
    \end{equation}
    (iii) Let $0<q \le \ols$ and choose $\N\ni m\geq q$. Due to symmetry of the form, $\frakA_n$ is self-adjoint and thus $\frakA_n^m$ is m-accretive.  A generalization of the Heinz inequality \cite{KatoHeinzIneq1961} then yields the statement for $\frakA_n^q$ via interpolation. Note that $\dom(\bfA^q) = L_2(\R^N,\PZ;D^q)$.
\end{proof}

We can now state our main result on the joint convergence rate for the approximation of random evolution equations. We recall the notions of rational schemes and A-stability from Definition \ref{def:rational_time_discretization}.

\begin{theorem}
\label{thm:jointrate_symmetric}
    Let $\ell \in \N$ and $\bar{T}>0$. Suppose that Assumption \ref{ass:coeffCond} holds for some $\ols\geq 1$ and $2\ell$ and that $V \hra H$ compactly.      
    Suppose that the space discretization converges with order $\px>0$ on $D^s\ce \dom(A_z^s)$ for some $0<s\leq \ols$ for the stationary problem. Further, suppose that the time discretization methods $\frakF_{n,m}$ are induced by an A-stable rational function $r$ for all $n\in\N_0$, $m\in\N$, and let $\pt>0$ be the order of convergence of the time discretizations.
    
    Let $\bfu$ be the mild solution of \eqref{eq:randomACP} with $\bfu_0 \in H_\rho^{2\ell}(\R^N,\PZ;D) \cap L_2(\R^N,\PZ;D^{\max\{s+1,\pt\}})$ where $\max\{s+1,\pt\} \leq \ols$ and, for $n,m,k\in\N$, let $\bfJ_{n,m}$ and $\bfu_{n,m,k}$ be as in \eqref{eq:defbfJnmPnm} and \eqref{eq:defbfunmk} via time discretization methods $\frakF_{n,m}$, respectively.
    
    Then there exist $ C_{\bar{T},\ell},C_{\bar{T},s,\pt}\geq 0$, $\tau_0>0$ such that for $\max_{i=1,\ldots,N_k} \tau_k^i \leq \tau_0$, 
    \begin{align*}
        \|\bfJ_{n,m}\bfu_{n,m,k}(t)-\bfu(t)\|_\bfH
        &\le C_{\bar{T},\ell} n^{-\ell} \|\bfu_0\|_{H_\rho^{2\ell}(\R^N,\PZ;D)}\\
        &\phantom{\le } +C_{\bar{T},s,\pt} \bigg(m^{-\px} + \Big(\max_{i=1,\ldots,N_k} \tau_k^i\Big)^{\pt}\bigg) \|\bfu_{0}\|_{L_2(\R^N,\PZ;D^{\max\{s+1,\pt\}})}
    \end{align*}
    for all $n,m,k\in\N$ and $t \in \calT_k$.
\end{theorem}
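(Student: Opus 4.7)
The plan is to split the full discretization error via the triangle inequality into two pieces: the semi-discretization error in space and time (after randomness has been handled via PCE) and the semi-discretization error in randomness. More precisely, I would write
\begin{equation*}
    \|\bfJ_{n,m}\bfu_{n,m,k}(t)-\bfu(t)\|_\bfH
    \le \|\bfJ_{n,m}\bfu_{n,m,k}(t)-\bfu_n(t)\|_\bfH
    + \|\bfu_n(t)-\bfu(t)\|_\bfH,
\end{equation*}
where $\bfu_n$ is the mild solution of the abstract Cauchy problem \eqref{eq:defunACP} associated with the truncated operator $\bfR_n\bfA\bfR_n$. These two terms are then bounded by the two main intermediate results already established in the paper.

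For the first term, I would apply Proposition \ref{prop:convergence_rate_random_symmetricPartI} directly, noting that the hypotheses on the space discretization, on compactness of $V \hra H$, on symmetry of $\bfform$ (inherited from $\PZ$-a.s.\ symmetry of $(\form_z)_{z \in \R^N}$ through Proposition \ref{prop:frakform_n_symmetric}), and on the A-stable rational time discretization methods $\frakF_{n,m}$ are precisely what is assumed in the statement. This yields
\begin{equation*}
    \|\bfJ_{n,m}\bfu_{n,m,k}(t)-\bfu_n(t)\|_\bfH
    \le C_{\bar{T}}\bigl(m^{-\px} + (\max_{i} \tau_k^i)^{\pt}\bigr)\|\fraku_{0n}\|_{\frakA_n^{\max\{s+1,\pt\}}},
\end{equation*}
provided $\max_{i} \tau_k^i \le \tau_0$. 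The crucial trick is then to pass from the graph norm $\|\fraku_{0n}\|_{\frakA_n^{\max\{s+1,\pt\}}}$, which is awkward because it depends on $n$, to an $n$-independent norm of $\bfu_0$. This is exactly the content of Lemma \ref{lem:DfrakAnalpha} applied with $q=\max\{s+1,\pt\}$ (which is allowed since $q \leq \ols$ by assumption and $\bfu_0 \in L_2(\R^N,\PZ;D^{q})$), yielding $\|\fraku_{0n}\|_{\frakA_n^{q}} \le C_{q}\|\bfu_0\|_{L_2(\R^N,\PZ;D^{q})}$ uniformly in $n$.

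For the second term, the Trotter--Kato estimate in Theorem \ref{thm:errorunuTrotterKato} directly gives
\begin{equation*}
    \|\bfu_n(t)-\bfu(t)\|_\bfH \le C_{\bar{T},\ell} n^{-\ell}\|\bfu_0\|_{H_\rho^{2\ell}(\R^N,\PZ;D)},
\end{equation*}
using Assumption \ref{ass:coeffCond} at order $2\ell$ and the regularity $\bfu_0 \in H_\rho^{2\ell}(\R^N,\PZ;D)$. Combining the two displays yields the claimed estimate with $C_{\bar{T},s,\pt}=C_{\bar{T}}C_{\max\{s+1,\pt\}}$.

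There is no real obstacle here, since all the heavy lifting has been done in Proposition \ref{prop:convergence_rate_random_symmetricPartI} (for space-time), in Theorem \ref{thm:errorunuTrotterKato} (for randomness), and in Lemma \ref{lem:DfrakAnalpha} (for the norm conversion). The only point requiring a little care is to verify that the regularity hypotheses on $\bfu_0$ in the statement feed exactly into the hypotheses of those three results, in particular that $\max\{s+1,\pt\} \le \ols$ so that the Heinz-inequality step in Lemma \ref{lem:DfrakAnalpha} applies, and that the $\PZ$-a.s.\ symmetry of $(\form_z)_{z \in \R^N}$ is what enables both Proposition \ref{prop:convergence_rate_random_symmetricPartI} (via self-adjointness of $\frakA_n$) and the self-adjointness argument used inside the proof of Lemma \ref{lem:DfrakAnalpha}. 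Everything else is assembly.
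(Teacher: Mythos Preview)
Your proposal is correct and follows essentially the same approach as the paper: split via the triangle inequality into $\|\bfJ_{n,m}\bfu_{n,m,k}(t)-\bfu_n(t)\|_\bfH + \|\bfu_n(t)-\bfu(t)\|_\bfH$, bound the first term with Proposition \ref{prop:convergence_rate_random_symmetricPartI} followed by Lemma \ref{lem:DfrakAnalpha} (with $q=\max\{s+1,\pt\}$) to convert the $\frakA_n$-graph norm into the $n$-independent $L_2(\R^N,\PZ;D^q)$-norm, and bound the second term with Theorem \ref{thm:errorunuTrotterKato}. Your identification of the hypotheses needed for each step (in particular $\max\{s+1,\pt\}\le\ols$ and the symmetry inherited through Proposition \ref{prop:frakform_n_symmetric}) is accurate.
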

\begin{proof}
    Abbreviate $\tau_{\max,k}\ce \max_{i=1,\ldots,N_k} \tau_k^i$. Then Proposition \ref{prop:convergence_rate_random_symmetricPartI} and Theorem \ref{thm:errorunuTrotterKato} imply that 
    \begin{align*}
        \|\bfJ_{n,m}\bfu_{n,m,k}(t)-\bfu(t)\|_\bfH &\le \|\bfJ_{n,m}\bfu_{n,m,k}(t)-\bfu_n(t)\|_\bfH + \|\bfu_n(t)-\bfu(t)\|_\bfH\\
        & \le C_{\bar{T},s,\pt} \big(m^{-\px} + (\tau_{\max,k})^{\pt}\big) \|\fraku_{0n}\|_{\frakA_n^{\max\{s+1,\pt\}}} + C_{\bar{T},\ell} n^{-\ell} \|\bfu_0\|_{H_\rho^{2\ell}(\R^N,\PZ;D)}.
    \end{align*}
    The corollary then follows from Lemma \ref{lem:DfrakAnalpha} applied to $\bfw=\bfu_0$ and $q=\max\{s+1,\pt\}$, which gives
    \[
        \|\fraku_{0n}\|_{\frakA_n^{\max\{s+1,\pt\}}} \leq C_q \norm{\bfu_0}_{L_2(\R^N,\PZ;D^{\max\{s+1,\pt\}})}. \qedhere
    \]
\end{proof}

\begin{remark}
    Let us comment on the case of deterministic initial values, i.e., $u_0\in D^{q}$ for some $q>0$. Then $\bfu_0\ce \1_{\R^N}\otimes u_0 \in H^{2\ell}_\rho(\R^N,\PZ;D^q)$ for all $\ell\in\N_0$ and
    \[\norm{\bfu_0}_{H^{2\ell}_\rho(\R^N,\PZ;D^q)} = \norm{u_0}_{D^q}.\]
    Hence, we naturally recover the results of Section \ref{sec:jointRate_deterministic}.
\end{remark}

\begin{remark}
    Joint convergence rates can also be obtained for non-symmetric forms $(\form_z)_{z \in \R^N}$. This requires a technically more intricate proof and an additional assumption: Suppose that in addition to Assumption \ref{ass:coeffCond} for $2\ell$ and $\ols\ge 2$, there exists a constant $C_{\ell,\dom}\ge0$ such that
    \[ 
        \|\rho(z)^{\alpha/2} \partial_z^\alpha A_zu\|_D \le C_{\ell,\dom} \|u\|_{D^2}
    \]
    for $\PZ$-almost every $z \in \R^N$ and all $u\in D$ and $\alpha \in \calN$ with $|\alpha|\le 2\ell$. Then, non-symmetric versions of Theorem \ref{thm:errorunuTrotterKato} and Lemma \ref{lem:DfrakAnalpha} can be established (see \cite[Theorem~3.72, Lemma 3.73b]{thesisKatharina}). An analogue of Theorem \ref{thm:jointrate_symmetric} is obtained provided that for some $\varepsilon>0$, $\bfu_0 \in H_\rho^{2\ell}(\R^N,\PZ;D^{1+\varepsilon})$ as well as $\bfu_0 \in L_2(\R^N,\PZ;D^{1+\varepsilon+\max\{s,\pt\}})$, and the time discretization is done via the implicit Euler method ($\pt=1$) or the Crank--Nicolson method ($\pt=2$), cf. \cite[Theorem~3.75]{thesisKatharina}.
\end{remark}

\begin{remark}
    It is well-known from other contexts that holomorphic dependence on the random inputs gives rise to exponential rates of convergence for PCE, see e.g.\ \cite[Subsection 3.3.2]{Xiu2010}. We also expect  exponential rates of convergence for PCE in our situation, as long as both $z\mapsto \form_z(u,v)$ for all $u,v\in V$ and $z\mapsto \bfu_0(z)$ are holomorphic, and we can control higher-order Sobolev norms of these mappings. We do not follow this direction here, but postpone it to future work.
\end{remark}

\begin{remark}
\label{rem:nonlinear_extensions}
    Let us comment on possible extensions of Theorem \ref{thm:jointrate_symmetric} to non-linear situations.

    \begin{enumerate}[ref=({\alph*})]
        \item\label{rem:nonlinear_extensions:item:semilinear}
            We first consider the semilinear case. For $z\in\R^N$ let $F_z\from H\to H$, and let $\bfF \from \bfH\to \bfH$ be the corresponding multiplication operator, which we assume to be Lipschitz continuous on $\bfH$, and such that it maps $H^{2\ell}_\rho(\R^N,\PZ;D)$ continuously to itself with linear growth. 

            Then we consider the abstract Cauchy problem
            \[
                \bfu'(t) = -\bfA \bfu(t)+\bfF(\bfu(t)),\quad \bfu(0)=\bfu_0
            \]
        with mild solution $\bfu$ given by
        \[\bfu(t)=\bfT(t)\bfu_0 + \int_0^t \bfT(t-s)\bfF(\bfu(s))\,\rmd s.\]
        We may approximate this problem by
        \[
            \bfu_n'(t) = -\bfR_n\bfA\bfR_n \bfu(t)+\bfF_n(\bfu_n(t)),\quad \bfu(0)=\bfu_{0,n} \ce \bfR_n\bfu_0
        \]
        with
        $\bfF_n \ce \bfR_n\bfF$ and mild solutions $\bfu_n$ given by
        \[\bfu_n(t)=\bfT_n(t)\bfu_{0,n} + \int_0^t \bfT_n(t-s)\bfF_n(\bfu_n(s))\,\rmd s.\]
        Then we can treat the semi-discretization in randomness analogously and afterwards perform space-time discretizations to obtain a version of Theorem \ref{thm:jointrate_symmetric}.
        
        \item 
        In case $H=L_2(G)$ for some (bounded Lipschitz) domain $G\subseteq\R^d$, a special case of \ref{rem:nonlinear_extensions:item:semilinear} is given by Nemytskii operators, that is, for $z\in\R^N$ let $f_z\from G\times \C\to \C$ satisfy the Carath\'{e}odory conditions (see, e.g.\cite[Subsection 9.3.4]{RenardyRogers1993}) and set $F_z(v):=f_z(\cdot,v(\cdot))$. At least in the autonomous situation, $f_z\from \C\to\C$ (no explicit dependence on $G$) and $F_z(v):=f_z(v) = f_z\circ v$, characterizations of mapping properties on Sobolev spaces are well-known \cite{Isaia2022}, which are needed for the mapping properties with values in $D$. Note that Sobolev regularity of $\bfF$ with respect to $z$ depends only on the corresponding Sobolev regularity of $z\mapsto f_z$. 

        \item 
        A more general situation is given by monotone operators $A_z$ for $z\in \R^N$. Here, the generated semigroups turn out to be non-linear, but still contractive. Moreover, a basic version of the Trotter--Kato theorem is available, see, e.g., \cite{Brezis1975} and references therein.
        As a first step towards convergence rates in the monotone setting, a quantified version of the Trotter--Kato theorem for maximal monotone operators would be beneficial.
        The authors leave this to future work.
    \end{enumerate}
\end{remark}

\section{Application to Random Anisotropic Diffusion}
\label{sec:applications}

Let $G\subseteq \R^2$ be open, bounded, convex, and polygonal, and let $H\ce L_2(G)$ and $V\ce H_0^1(G)$.
Let $(\Omega,\calF,\P)$ be a probability space, $Z\from \Omega\to\R^N$ a random variable with independent components and distribution $\P_Z$. Assume that each of the components is standard normally distributed, Beta-distributed, or Gamma-distributed; cf. Assumption \ref{ass:Zdistribution}. Let $\R^N\times G \ni (z,x)\mapsto M_z(x)\in \K^{2\times 2}$ such that $M_z(x)$ is Hermitian for all $x\in G$ and $\PZ$-a.e.\ $z\in\R^N$, and assume there exist $\kappa,K>0$ such that
\begin{equation}
\label{eq:coeffMatrixBdd}
    \kappa \leq M_z(x) \leq K \quad(x\in G, \PZ\text{-a.e.}\,z\in\R^N).
\end{equation}
Moreover, let $z\mapsto \int_G M_z(x)_{jk} h(x)\,\rmd x$ be measurable for all $j,k\in\{1,2\}$ and $h\in L_1(G)$.
For those $z\in\R^N$ for which \eqref{eq:coeffMatrixBdd} holds, define $\form_z\from V\times V\to \K$ by
\[
    \form_z(u,v) \ce \int_G M_z(x)\grad u(x) \cdot \overline{\grad v(x)}\,\rmd x \quad(u,v\in H_0^1(G)),
\]
and let $a_z(u,v)=0$ otherwise. Here and in the following, $\grad$ and $\diverg$ are acting on the spatial coordinates $x$ only.
Then $z\mapsto \form_z(u)$ is measurable for all $u\in V$ and 
\[
    \kappa \norm{u}_V^2 \leq \form_z(u), \quad \abs{\form_z(u,v)} \leq K \norm{u}_V\norm{v}_V \quad(u,v\in V, \PZ\text{-a.e.}\,z\in\R^N).
\]
That is, $(a_z)_{z \in \R^N}$ is $\PZ$-almost surely uniformly bounded and coercive, as required in Assumption \ref{ass:formUnifBdCoercive}.

For $z\in\R^N$ let $A_z$ be the self-adjoint operator in $H$ associated with $\form_z$. Note that $[0,\infty)$ is contained in the resolvent set of $-A_z$ for $\PZ$-a.e.\ $z\in\R^N$.

\begin{remark}
    Provided that $M_z\in W_\infty^1(G)$ for $\PZ$-a.e. $z\in\R^N$, we $\PZ$-almost surely have
    \begin{align*}
        \dom(A_z)  = H_0^1(G)\cap H^2(G),\quad
        A_z u  = -\div M_z(x) \grad u.
    \end{align*}
    Hence, we can set $D \ce H_0^1(G) \cap H^2(G)$ equipped with the $H^2$-norm. 
\end{remark}

To establish the estimate of derivatives of the form in $z$ required for Assumption \ref{ass:coeffCond} to hold, smoothness of the coefficient matrix is required. We endow $\Kzz$ with the spectral norm $\|\cdot\|_2$ and $\K^2$ with the Euclidean norm. Denote by $\Div\from \Kzz\to\K^2$ the row-wise divergence operator so that $\Div(b(\cdot)^T)$ calculates the divergence of a matrix-valued function $b\in C^1(G;\Kzz)$ column-wise.

\begin{definition}
\label{def:Crhol1Ex}
    For $\ell \in \N_0$, define $C_\rho^{\ell,1}(\R^N \times G;\Kzz)$ as the space of all $f\from \R^N \times G \to \Kzz$ such that $f(\cdot,x) \in C^\ell(\R^N;\Kzz)$ for all $x \in G$, $f(z,\cdot) \in C^1(G;\Kzz)$ for $\PZ$-a.e.\ $z \in \R^N$ and there is $C_\ell \ge 0$ such that for all $\alpha \in \N_0^N$ with $|\alpha|\le \ell$, we have 
    \begin{align*}
        \max\Big\{\PZ\text{-}\esssup_{z \in \R^N} \big\|\rho(z)^{\alpha/2} \Div \big((\partial_z^\alpha f(z,\cdot))^T\big)\big\|_{L_\infty(G;\K^2)},\,
        \PZ\text{-}\esssup_{z \in \R^N} \|\rho(z)^{\alpha/2} \partial_z^\alpha f(z,\cdot)\|_{L_\infty(G;\Kzz)}\Big\} \le C_\ell.
    \end{align*}
    Inductively, for $k\in\N$ let $C_\rho^{\ell,k+1}(\R^N \times G;\Kzz)$ denote the space of all $f \in C_\rho^{\ell,k}(\R^N \times G;\Kzz)$ such that $f(z,\cdot) \in C^{k+1}(G;\Kzz)$ for $\PZ$-a.e.\ $z \in \R^N$ and $[(z,x) \mapsto \partial_x^{e_j} f(z,x)] \in C_\rho^{\ell,k}(\R^N \times G; \Kzz)$ for $j=1,2$.
\end{definition}

Assume that $[(z,x) \mapsto M_z(x)]\in C_\rho^{\ell,1}(\R^N \times G;\K^{2\times 2})$. Then the graph norms of $A_z$ are equivalent for $\PZ$-a.e.\ $z \in \R^N$ since there are $C,c \ge 0$ such that for all $u \in \dom(A_z)$ and $\PZ$-a.e.\ $z \in \R^N$ we have 
\begin{equation}
\label{eq:normEquivaExample}
    \|u\|_{H^2} \le c \|A_zu\|_{L_2} \le C \|u\|_{H^2}.
\end{equation}
Indeed, the first inequality follows from \cite[Theorems~3.2.1.2~and~3.1.3.1]{Grisvard2011} because $G$ is convex. Since $[(z,x) \mapsto M_z(x)]$ is continuously differentiable in $x$, for $u \in \dom(A_z)$, we can rewrite
\[
    A_z u = -\div( M_z(\cdot) \grad u)  = -\Div\big( M_z(\cdot)^T\big)\cdot \grad u -M_z(\cdot):\Hess u,
\]
where $\Hess u$ is the Hessian matrix of $u$ and $B:C=\sum_{i,j=1}^2 b_{i,j}{c_{i,j}}$ is the sum of the entry-wise product of two matrices $B,C\in \Kzz$. By assumption for $\alpha=0$ and due to the definition of $C_\rho^{\ell,1}(\R^N \times G;\Kzz)$, we can thus estimate
\begin{align}
\label{eq:DivHessExample}
        \|A_z u\|_{L_2} &\le \norm{\Div\big( M_z(\cdot)^T\big)\cdot \grad u}_{L_2}
        + \norm{M_z(\cdot):\Hess u}_{L_2}\nonumber\\
        &\le \sup_{x\in G}\norm{\Div\big( M_z(x)^T\big)}_\infty \norm{ \grad u}_{L_2}+\bigg(\sup_{x \in G} \max_{i,j \in \{1,2\}} \abs{(M_z(x))_{i,j}} \bigg) \bigg\|\sum_{i,j=1}^2 \partial_{ij}u\bigg\|_{L_2}\nonumber\\
        &\le C_0 \|\grad u\|_{L_2} + C_0 \sum_{i,j=1}^2 \|\partial_{ij}u\|_{L_2} \le \sqrt{6} C_0 \|u\|_{H^2}
\end{align}
for all $u \in \dom(A_z)$ and $\PZ$-a.e.\ $z\in \R^N$, where $\|\cdot\|_\infty$ denotes the maximum norm in $\K^2$. This yields the second inequality.
Consequently, Assumption \ref{ass:DAzDconstant} is satisfied for $\ols=1$. However, we would like to have the assumption satisfied for some $\ols> 1$. This can be shown under additional assumptions on the diffusion coefficients. In case we want $1< \ols < 5/4$, reasoning as in \cite{HajdukRobinson2021} we observe that for $[(z,x) \mapsto M_z(x)]\in C_\rho^{\ell,2}(\R^N \times G;\K^{2\times 2})$ we have $D^{\ols} = H_0^1(G)  \cap H^{2\ols}(G)$ with corresponding graph norms of $A_z^{\ols}$ being equivalent, i.e., Assumption \ref{ass:DAzDconstant} is satisfied. In case we want $\ols\geq \frac{5}{4}$, further regularity on the coeeficients and, due to the Dirichlet boundary conditions, further assumptions on the traces of the coefficients may be needed. We will consider the case $\ols=2$. In order to obtain that the domains $\dom(A_z^2)$ are $\PZ$-almost surely constant, we may assume that $[(z,x) \mapsto M_z(x)]\in C_\rho^{\ell,3}(\R^N \times G;\K^{2\times 2})$ as well as $\tr M_z$ and $\tr \Div(M_z(\cdot)^T)$ are constant for $\PZ$-almost every $z\in\R^N$, where $\tr$ denotes the trace operator for $G$.
Then, a similar reasoning as above yields that Assumption \ref{ass:DAzDconstant} is satisfied for $\ols=2$, and the graph norms of $A_z^2$ can be compared to the $H^4(G)$-norm.

To verify the form estimate from Assumption \ref{ass:coeffCond}, it suffices to show that there is $C_\ell\ge 0$ such that
\begin{equation}
\label{eq:derivAzExample}
    \big\|\rho(z)^{\alpha/2}\partial_z^\alpha A_z u\big\|_{L_2} \le C_\ell \|u\|_{H^2}
\end{equation}
for all $\alpha \in \N_0^N$ with $\abs{\alpha}\le \ell$ and $\PZ$-a.e.\ $z\in \R^N$. Indeed, for $u \in \dom(\partial_z^\alpha A_z) \cap H^2(G)$ and $v \in H_0^1(G)$, \eqref{eq:derivAzExample} implies
\begin{align*}
    \big|\rho(z)^{\alpha/2}\partial_z^\alpha \form_z(u,v)\big| &= \big|\big(\rho(z)^{\alpha/2} \partial_z^\alpha A_zu\big|v\big)\big|
    \le \big\|\rho(z)^{\alpha/2} \partial_z^\alpha A_zu\big\|_{L_2}\norm{v}_{L_2}\le C_\ell \|u\|_{H^2}\|v\|_{L_2} = C_\ell \|u\|_{D}\|v\|_{H}.
\end{align*}
Now, we show \eqref{eq:derivAzExample}. By assumption, $[(z,x)\mapsto M_z(x)]$ is continuously differentiable in $x$ (and $\ell$ times continuously differentiable in $z$) so that
\[
    \partial_z^\alpha A_z u = -\div( \partial_z^\alpha M_z(\cdot) \grad u)  = -\Div\big( \partial_z^\alpha M_z(\cdot)^T\big)\cdot \grad u -\big(\partial_z^\alpha M_z(\cdot)\big):\Hess u.
\]
Analogously to \eqref{eq:DivHessExample}, we infer that for $[(z,x)\mapsto M_z(x)] \in C_\rho^{\ell,1}(\R^N\times G;\Kzz)$,
\begin{align*}
     \big\|\rho(z)^{\alpha/2}\partial_z^\alpha A_z u\big\|_{L_2}
     &\le \big\|\rho(z)^{\alpha/2}\Div\big( (\partial_z^\alpha M_z(\cdot))^T\big)\cdot \grad u\big\|_{L_2}
        + \big\|\rho(z)^{\alpha/2}\big(\partial_z^\alpha M_z(\cdot)\big):\Hess u\big\|_{L_2} \\
        & \le \sqrt{6} C_\ell \|u\|_{H^2}.
\end{align*}
In conclusion, Assumption \ref{ass:coeffCond} is satisfied for $\ols$ and $\ell \in \N$ in any of the following cases:
\begin{enumerate}[label=(\roman*)]
    \item\label{item:alphaOne} $[(z,x)\mapsto M_z(x)] \in C_\rho^{\ell,1}(\R^N\times G;\Kzz)$ and $\ols=1$,
    \item\label{item:alphaFiveFour} $[(z,x)\mapsto M_z(x)] \in C_\rho^{\ell,2}(\R^N\times G;\Kzz)$ and $1<\ols<5/4$,
    \item\label{item:alphaTwo} $[(z,x)\mapsto M_z(x)] \in C_\rho^{\ell,3}(\R^N\times G;\Kzz)$ and $\frac54 \le \ols \le 2$  as well as $\tr M_z$ and $\tr \Div(M_z(\cdot)^T)$ are $\PZ$-almost surely constant.
\end{enumerate} 

As space discretization, we employ the quadratic finite element method (FEM). More precisely, we consider quasi-uniform triangulations $G_h$ of $G$, $h>0$, consisting of triangles with a circumference no larger than $h=\frac{2}{m}$. Let
\begin{equation*}
    V_m \ce \{u \in H_0^1(G):~u\vert_L \in \calP_2^2~~\forall\,L \in G_h\}
\end{equation*}
be the corresponding quadratic triangular finite element (FE) space. Due to the finite dimension of $V_m$, the spaces $H_m$ and $V_m$ coincide. For $m \in \N$, $P_m$ is the $L_2(G)$-orthogonal projection from $L_2(G)$ onto $V_m$. These are not to be confused with the $V$-orthogonal projections from $V=H_0^1(G)$ onto $V_m$ yielding the FE approximation $u_h$ of $u$ from $V_m$. Then the error estimate \cite[Satz~6.4]{braessFEM} 
\begin{equation}
\label{eq:FEMerrest2D}
    \|u-u_h\|_{H^r(G)} \le C h^{\ell-r} |u|_{H^\ell(G)},\quad(r\in \{0,\ldots,\ell\}),
\end{equation}
holds true for some $C \ge 0$ for $\ell =2, 3$ and all $u \in H^\ell(G)$, where $|\cdot|_{H^\ell(G)}$ denotes the standard $H^\ell(G)$-seminorm. Setting $r=1$ and $\ell\in \{2,3\}$ yields linear and quadratic decay of $\|u-u_h\|_{H^1(G)}$ in $m$ for $u \in H^2(G)$ and $u \in H^3(G)$, respectively, since $h=\frac{2}{m}$.

From these FEM estimates for the stationary problem, we can deduce a spatial convergence rate for the time-dependent problem via Theorem \ref{thm:result_evolution}. To this end, we determine the decay rates $p_1(s)$, $0<s\le\ols-1$, and $p_2$ of $(\gamma_m(D^s))_{m \in \N}$ and $(\gamma_m^*(H))_{m \in \N}$, respectively. We let $\ols> 1$ and start with the latter.

Firstly, $\form_z$ is symmetric $\PZ$-almost surely since we assumed $M_z(x)$ to be Hermitian for all $x \in G$ and $\PZ$-a.e.\ $z \in \R^N$. Secondly, $\calA_z^{-1}H=\dom(A_z)=D$ $\PZ$-almost surely by definition of $A_z$ as the restriction of $\calA_z$ to the preimage of $H$ under $\calA_z$. From the FE estimate \eqref{eq:FEMerrest2D} with $\ell=2$ and the almost sure equivalence \eqref{eq:normEquivaExample} of the $H^2$-norm and $\|A_z\cdot\|_{L_2}$, we deduce that $\PZ$-almost surely
\[
    \inf_{v \in V_m} \|u-v\|_{H^1} \le \|u-u_h\|_{H^1} \le \frac{C}{m} |u|_{H^2} \le \frac{cC}{m} \|A_zu\|_{L_2}
\]
provided that $[(z,x)\mapsto M_z(x)] \in C_\rho^{\ell,1}(\R^N\times G;\Kzz)$. Therefore, the decay rate $p_2=1$ is obtained.

Next, we calculate $p_1(s)$ depending on the value of $0<s \le \ols-1$. Since this implies $\ols>1$, we are only interested in the cases \ref{item:alphaFiveFour} and \ref{item:alphaTwo} listed above. Calculating $p_1(s)$ requires establishing an estimate of the form
\[
    \inf_{u \in V_m} \|u-v\|_{H^1} \le \frac{C_{\gamma,D^s}}{m^{p_1(s)}} \|A_z u\|_{D^{s}}\quad (u \in \dom(A_z) \text{ with }A_zu \in \dom(A_z^{s})=D^s)
\]
for $\PZ$-a.e.\ $z \in \R^N$.
Repeating the estimate performed for $p_2$, we obtain $p_1(s)=1$ for all $0<s\le \ols-1$ since $D^s \hra L_2(G)$. However, this estimate does not take the increased smoothness of $u$ for larger $s$ into account. Provided that $s \ge \frac{1}{2}$, we can make use of the smoothness of $u \in \dom(A_z)$ such that $A_z u \in \dom(A_z^s)$ almost surely to obtain a higher decay rate.

Suppose that $\ols \ge \frac{3}{2}$ and $s \ge \frac{1}{2}$. Then $u \in \dom(A_z)=H^2(G) \cap H_0^1(G)$ and $A_zu \in D^{1/2}$ $\PZ$-almost surely. The higher-order FEM estimate \eqref{eq:FEMerrest2D} with $\ell=3$ and the equivalence of the $H^2$-norm and the graph norm; cf. \eqref{eq:normEquivaExample}, imply
\begin{align}
\label{eq:FEMalpha12FirstPart}
    \inf_{v \in V_m} \|u-v\|_{H^1}^2 &\le \|u-u_h\|_{H^1}^2 \le \Big(\frac{C}{m^2}\Big)^2 |u|_{H^3}^2 
    = \Big(\frac{C}{m^2}\Big)^2 \left(|\partial_1 u|_{H^2}^2+|\partial_2 u|_{H^2}^2\right)\nonumber\\
    &\le c^2\Big(\frac{C}{m^2}\Big)^2 \left(\|A_z\partial_1 u\|_{L_2}^2+\|A_z\partial_2 u\|_{L_2}^2\right)\quad(\PZ\text{-a.e.}\,z\in\R^N).
\end{align}
In order to estimate $\|A_z\partial_j u\|_{L_2}^2$ for $j=1,2$, we need $[(z,x) \mapsto M_z(x)] \in C_\rho^{\ell,2}(G;\Kzz)$. In particular, $M_z(\cdot) \in C_b^2(G;\Kzz)$ for $\PZ$-a.e.\ $z \in \R^N$. This smoothness allows us to explicitly compute
\[
    \partial_j(\Div(b)\grad u) = \Div(b)\grad(\partial_j u) + \Div(\partial_j b) \grad u
\]
for $j=1,2$, $u \in H^3(G)$, and coefficients $b\in C^2(G;\K^{2\times 2})$. Hence, $\PZ$-almost surely
\begin{align*}
    A_z\partial_j u &= -\div(M_z(\cdot) \grad (\partial_j u)) = -\Div (M_z(\cdot)^T) \grad \partial_j u - M_z(\cdot):\Hess \partial_j u\\
    &= -\partial_j ( \Div(M_z(\cdot)^T) \grad u) + \Div(\partial_j (M_z(\cdot)^T)) \grad u - \partial_j (M_z(\cdot):\Hess u) + (\partial_j M_z(\cdot)):\Hess u\\
    &= \partial_j (A_zu) + \Div(\partial_j (M_z(\cdot)^T)) \grad u+ (\partial_j M_z(\cdot)):\Hess u.
\end{align*}
By assumption, $\partial_j (M_z(\cdot)^T)$ and $\partial_j M_z(\cdot)$ have bounded derivatives, so that an analogous estimate to \eqref{eq:DivHessExample} results in
\begin{align*}
    &\|A_z\partial_j u\|_{L_2}
    \le  \|\partial_j (A_zu)\|_{L_2} + C\|u\|_{H^2} \quad (u \in H^3(G),\, j=1,2).
\end{align*}
Thus, with $C$ denoting a generic constant taking different values,
\begin{align*}
    \sum_{j=1}^2\|A_z\partial_j u\|_{L_2}^2
    &\le C\sum_{j=1}^2 \|\partial_j (A_zu)\|_{L_2}^2 + C\|u\|_{H^2}^2 = C \|\grad(A_zu)\|_{L_2}^2 + C\|u\|_{H^2}^2\\
    &\le C \|A_z^{1/2}A_z u\|_{L_2}^2 + C\|A_zu\|_{L_2}^2 \le C \|A_z u \|_{\dom(A_z^{1/2})}^2,
\end{align*}
where in the penultimate inequality we have used the Kato square root property for self-adjoint elliptic operators \cite[Theorem~1]{Katosquareroot} for the first term and \eqref{eq:normEquivaExample} for the second one. Combining this estimate with \eqref{eq:FEMalpha12FirstPart} yields $p_1(\frac12)=2$, and thus $p_1(s)=2$ for all $s \ge \frac12$.

As a consequence of Theorem \ref{thm:result_evolution}, we obtain a spatial convergence rate depending on the smoothness of the space $\Yx$ by summing $p_1(s)$ and $p_2$.

\begin{proposition}
\begin{enumerate}[label=(\alph*)]
    \item Suppose that $[(z,x) \mapsto M_z(x)]$ satisfies \ref{item:alphaFiveFour}. Then the space discretization defined here above converges on $\Yx = D^{s+1}$ with order $\px=2$ for $0< s < 1/4$.
    \item Suppose that $[(z,x) \mapsto M_z(x)]$ satisfies \ref{item:alphaTwo}. Then the space discretization defined here above converges on $\Yx = D^{s+1}$ with order $\px=2$ for $0< s < 1/2$ and with order $\px=3$ for $\frac{1}{2}\leq s\leq 1$.
\end{enumerate}
    
\end{proposition}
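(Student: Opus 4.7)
The proof will be a short assembly of the ingredients that have just been prepared in the paragraphs preceding the proposition. The plan is to invoke Theorem \ref{thm:result_evolution} pointwise in $z$ and to plug in the two decay rates $p_1(s)$ and $p_2$ identified above. Since $(\form_z)_{z\in\R^N}$ is $\PZ$-almost surely symmetric, uniformly bounded, and uniformly coercive with constants $K,\kappa$ independent of $z$, and since the Galerkin subspaces $V_m$ are the same for every $z$, all constants appearing in Theorem \ref{thm:result_evolution} can be chosen uniformly in $z$. In particular, under either hypothesis \ref{item:alphaFiveFour} or \ref{item:alphaTwo}, Assumption \ref{ass:DAzDconstant} holds with $\ols>1$ and the common graph norms are equivalent to $\|\cdot\|_{H^{2\ols}(G)}$.

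First I would restate the two decay rates already computed above. The adjoint rate $p_2$ equals $1$: indeed, $\calA_z^{-*}H = \dom(A_z^*) = D$ $\PZ$-a.s., and combining the quadratic FEM error \eqref{eq:FEMerrest2D} for $\ell=2$ with the norm equivalence \eqref{eq:normEquivaExample} gives
\[
 \gamma_m^*(H) \le \sup_{\|f\|_H=1}\|\calA_z^{-*}f - (\calA_z^{-*}f)_h\|_{H^1} \le \frac{C}{m}\sup_{\|f\|_H=1}|\calA_z^{-*}f|_{H^2} \le \frac{C'}{m},
\]
which needs only the regularity encoded by $C_\rho^{\ell,1}$, present in both \ref{item:alphaFiveFour} and \ref{item:alphaTwo}. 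For the primal rate $p_1(s)$, the same argument yields the baseline $p_1(s)=1$ valid for every $0<s\le\ols-1$; the refined bound $p_1(s)=2$ for $s\ge 1/2$ is precisely what was obtained above by combining the cubic FEM error \eqref{eq:FEMerrest2D} with $\ell=3$, the commutator computation expressing $A_z\partial_j u$ as $\partial_j(A_zu)$ plus lower-order terms, and the Kato square root identity $\|\grad(A_zu)\|_{L_2}\lesssim \|A_z^{1/2}A_z u\|_{L_2}$, all of which presuppose $C_\rho^{\ell,2}$ and hence are available in cases \ref{item:alphaFiveFour} and \ref{item:alphaTwo}.

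Next I would apply Theorem \ref{thm:result_evolution} with initial space $\Yx=D^{s+1}=\dom(A_z^{s+1})$: it returns convergence of order $\px=p_1(s)+p_2$. This immediately delivers the numerical rates claimed in (a) and (b). For (a), the constraint $\ols<5/4$ enforces $0<s\le \ols-1<1/4$, so $s<1/2$ and the baseline $p_1(s)=1$ applies, giving $\px=1+1=2$. For (b), the constraint $\ols=2$ allows $0<s\le 1$; for $0<s<1/2$ we again have $p_1(s)=1$ and hence $\px=2$, while for $\tfrac12\le s\le 1$ the refined bound $p_1(s)=2$ yields $\px=2+1=3$.

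There is no genuine obstacle here — the proposition is essentially a bookkeeping corollary of the preceding derivations — but two small points deserve care. First, I must make sure the $\PZ$-a.s. uniformity of $K,\kappa$ and the norm equivalences in \eqref{eq:normEquivaExample} allow the constants in Theorem \ref{thm:result_evolution} to be chosen independently of $z$, so that the statement is genuinely a convergence of the space discretization in the sense of Definition \ref{def:convratespace} rather than an $z$-dependent family of estimates. Second, I would double-check that in case (a), the threshold $s<1/4$ (rather than $s\le 1/4$) is the correct phrasing, coming from the strict inequality $\ols<5/4$ that was imposed when invoking the $H^{2\ols}$-regularity result of Hajduk--Robinson; if equality were permitted the endpoint $s=1/4$ would still give the same rate $\px=2$.
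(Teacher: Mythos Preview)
Your proposal is correct and matches the paper's approach exactly: the proposition has no separate proof in the paper, being presented as an immediate consequence of Theorem~\ref{thm:result_evolution} together with the decay rates $p_2=1$ and $p_1(s)\in\{1,2\}$ computed in the preceding paragraphs. Your additional remarks on the $\PZ$-a.s.\ uniformity of constants and the strict inequality $s<1/4$ in case~(a) are valid observations that the paper leaves implicit.
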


\begin{remark}
    Higher values of $s>\frac12$ do not result in higher values of $p_1(s)$ or $\px$. This restriction is due to quadratic FE spaces being used. To further increase $p_1(s)$ for large $s$, higher-order FE spaces, such as piecewise cubic polynomials on a quasi-uniform triangulation, have to be used. 
\end{remark}

Having verified Assumption \ref{ass:coeffCond} and calculated the spatial convergence rate for our example, we can apply Theorem \ref{thm:jointrate_symmetric} to obtain a joint convergence rate for the full discretization of the abstract Cauchy problem associated with $A_z$.

\begin{theorem}
\label{thm:exampleCoeffMatrix_smallalpha}
    Let $Z\from \Omega \to \R^N$ be a random variable satisfying Assumption \ref{ass:Zdistribution}. Suppose that $\R^N\times G \ni (z,x) \mapsto M_z(x) \in \Kzz$ satisfies \eqref{eq:coeffMatrixBdd} and is contained in $C_\rho^{2\ell,2}(\R^N\times G;\Kzz)$ for some $\ell \in \N$. Let $0<s<1/4$ and  
    $\bfu=(u_z)_{z \in \R^N}$ be the mild solution of 
    \[
        u_z'(t)=-\div M_z(x) \grad u_z(t) \quad(t>0),\quad u_z(0)=u_{0,z}
    \]
    with $\bfu_0=(u_{0,z})_{z \in \R^N} \in H_\rho^{2\ell}(\R^N,\PZ;D) \cap L_2(\R^N,\PZ;D^{s+1})$. For $n,m,k\in\N$, let $\bfu_{n,m,k}$ as in \eqref{eq:defbfunmk}, where a quadratic finite element method and implicit Euler are used for discretization in space and time, respectively.\\
    Then there exist $ C_{\bar{T},\ell},C_{\bar{T},s}\geq 0$, $\tau_0>0$ such that for $\max_{i=1,\ldots,N_k} \tau_k^i \leq \tau_0$, 
    \begin{align*}
        \|\bfu_{n,m,k}(t)-\bfu(t)\|_\bfH
        &\le C_{\bar{T},\ell} n^{-\ell} \|\bfu_0\|_{H_\rho^{2\ell}(\R^N,\PZ;D)} +C_{\bar{T},s} \bigg(m^{-2} + \max_{i=1,\ldots,N_k} \tau_k^i\bigg) \|\bfu_{0}\|_{L_2(\R^N,\PZ;D^{s+1})}
    \end{align*}
for all $n,m,k\in\N$ and $t \in \calT_k$.
\end{theorem}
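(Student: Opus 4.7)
The plan is to apply Theorem \ref{thm:jointrate_symmetric} with $\px=2$, $\pt=1$, and the prescribed $\ell$ and $s$, so the task reduces to checking every hypothesis of that theorem in the present setting and then extracting the displayed estimate. The key bookkeeping point is that $s<1/4$ and $\pt=1$ force $\max\{s+1,\pt\}=s+1$, which must be dominated by a suitable smoothness parameter $\ols$.

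First I would verify the structural hypotheses. The embedding $V=H_0^1(G)\hookrightarrow L_2(G)=H$ is compact by the Rellich--Kondrachov theorem, since $G$ is bounded. The matrix $M_z(x)$ is Hermitian, and \eqref{eq:coeffMatrixBdd} together with measurability gives $\PZ$-almost sure uniform boundedness, coercivity, and symmetry of $(\form_z)_{z\in\R^N}$, so Assumption \ref{ass:formUnifBdCoercive} holds. Since $M_z\in C_\rho^{2\ell,2}(\R^N\times G;\Kzz)$ by hypothesis, we are in case \ref{item:alphaFiveFour} of the preceding discussion, yielding Assumption \ref{ass:coeffCond} at regularity $2\ell$ with any $\ols\in(1,5/4)$; in particular, the common domain $D^{\ols}=H_0^1(G)\cap H^{2\ols}(G)$ is well-defined and carries graph-norm equivalences. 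Because $0<s<1/4$, we can and do fix $\ols$ with $s+1<\ols<5/4$, so that $\max\{s+1,\pt\}=s+1\leq\ols$ as required in the theorem.

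Second I would identify the two discretization rates. The quadratic FEM scheme converges with order $\px=2$ on $D^{s+1}$ for $0<s<1/4$ by part (a) of the proposition proved just above, so the space discretization hypothesis of Theorem \ref{thm:jointrate_symmetric} is met. For time, the implicit Euler method (Example \ref{ex:time_discretization_methods}\ref{item:IEintro}) is induced by $r(z)=(1-z)^{-1}$, which is A-stable in the sense of Definition \ref{def:rational_time_discretization} since $|r(z)|<1$ for $\Re z<0$, and satisfies $r(z)-\e^z=\mathcal{O}(|z|^2)$. Because $\frakA_n$ is self-adjoint (Proposition \ref{prop:frakform_n_symmetric}), the scheme attains order $\pt=1$ at the regularity level encoded in $\|\fraku_{0n}\|_{\frakA_n^{\max\{s+1,\pt\}}}$, matching the form of Theorem \ref{thm:jointrate_symmetric}.

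With all the assumptions verified, Theorem \ref{thm:jointrate_symmetric} applies verbatim and produces precisely the three summands $n^{-\ell}\|\bfu_0\|_{H^{2\ell}_\rho(\R^N,\PZ;D)}$, $m^{-\px}\|\bfu_0\|_{L_2(\R^N,\PZ;D^{\max\{s+1,\pt\}})}$, and $(\max_i \tau_k^i)^{\pt}\|\bfu_0\|_{L_2(\R^N,\PZ;D^{\max\{s+1,\pt\}})}$; specializing $\px=2$, $\pt=1$, and $\max\{s+1,\pt\}=s+1$ yields the claim. The only subtle point--and the closest thing to an obstacle--is the choice of $\ols$: it must simultaneously lie strictly below $5/4$ (the regularity ceiling supplied by case \ref{item:alphaFiveFour}) and strictly above $s+1$ (the regularity required for $\bfu_0$). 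The restriction $s<1/4$ in the statement is exactly what makes these two constraints compatible; lifting it would force $\ols\geq 5/4$ and therefore require the stronger coefficient regularity of case \ref{item:alphaTwo} together with the trace conditions on $M_z$.
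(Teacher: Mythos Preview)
Your proposal is correct and follows exactly the route the paper intends: the theorem is stated without a separate proof because the preceding discussion in Section~\ref{sec:applications} has already verified each hypothesis of Theorem~\ref{thm:jointrate_symmetric}, and you reproduce that verification faithfully, including the key observation that $s<1/4$ is precisely what allows the choice of $\ols\in(s+1,5/4)$ in case~\ref{item:alphaFiveFour}. One small wording point: the Proposition you cite is phrased for the \emph{evolution} problem on $D^{s+1}$ (via Theorem~\ref{thm:result_evolution}), whereas Theorem~\ref{thm:jointrate_symmetric} asks for convergence of the \emph{stationary} problem on $D^s$; but since both follow from the same decay rates $p_1(s)=1$ and $p_2=1$ computed just before that Proposition (via Theorem~\ref{thm:result_stationary}), this is only a matter of citation and not a gap.
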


\begin{theorem}
\label{thm:exampleCoeffMatrix}
    Let $Z\from \Omega \to \R^N$ be a random variable satisfying Assumption \ref{ass:Zdistribution}. Suppose that $\R^N\times G \ni (z,x) \mapsto M_z(x) \in \Kzz$ satisfies \eqref{eq:coeffMatrixBdd} and is contained in $C_\rho^{2\ell,3}(\R^N\times G;\Kzz)$ for some $\ell \in \N$ such that $\tr M_z$ and $\tr \Div(M_z(\cdot)^T)$ are constant for $\PZ$-almost every $z\in\R^N$. Let $0<s\leq 1$ and  
    $\bfu=(u_z)_{z \in \R^N}$ be the mild solution of 
    \[
        u_z'(t)=-\div M_z(x) \grad u_z(t) \quad(t>0),\quad u_z(0)=u_{0,z}
    \]
    with $\bfu_0=(u_{0,z})_{z \in \R^N} \in H_\rho^{2\ell}(\R^N,\PZ;D) \cap L_2(\R^N,\PZ;D^{s+1})$. For $n,m,k\in\N$, let $\bfu_{n,m,k}$ as in \eqref{eq:defbfunmk}, where a quadratic finite element method and implicit Euler are used for discretization in space and time, respectively.\\
    Then there exist $ C_{\bar{T},\ell},C_{\bar{T},s}\geq 0$, $\tau_0>0$ such that for $\max_{i=1,\ldots,N_k} \tau_k^i \leq \tau_0$, 
    \begin{align*}
        \|\bfu_{n,m,k}(t)-\bfu(t)\|_\bfH
        &\le C_{\bar{T},\ell} n^{-\ell} \|\bfu_0\|_{H_\rho^{2\ell}(\R^N,\PZ;D)} +C_{\bar{T},s} \bigg(m^{-\px} + \max_{i=1,\ldots,N_k} \tau_k^i\bigg) \|\bfu_{0}\|_{L_2(\R^N,\PZ;D^{s+1})}
    \end{align*}
    for all $n,m,k\in\N$ and $t \in \calT_k$ with $\px=3$ for $s \ge \frac12$ and $\px=2$ for $s \in (0,\frac12)$.
\end{theorem}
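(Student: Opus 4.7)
The plan is to verify each hypothesis of Theorem \ref{thm:jointrate_symmetric} in this concrete setting and then invoke it directly with the regularity index $\ols = 2$. Four things need to be checked: Assumption \ref{ass:coeffCond}, the compact embedding $V \hookrightarrow H$, the spatial convergence order on $D^{s+1}$, and that implicit Euler is an A-stable rational scheme of convergence order $\pt=1$ in the present self-adjoint setting.

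First, Assumption \ref{ass:coeffCond} is precisely case \ref{item:alphaTwo} of the discussion preceding the theorem: the $C_\rho^{2\ell,3}$-regularity of $M_z$ together with the $\PZ$-a.s.\ constancy of $\tr M_z$ and $\tr \Div(M_z(\cdot)^T)$ guarantees that $\dom(A_z^{\ols}) = D^{\ols}$ is $\PZ$-a.s.\ constant with equivalent graph norms (comparable to the $H^4(G)$-norm for $\ols=2$), while the pointwise derivative bound
\[
    \bigl\|\rho(z)^{\alpha/2}\,\partial_z^{\alpha} A_z u\bigr\|_{L_2} \,\le\, \sqrt{6}\,C_{2\ell}\,\|u\|_{H^2} \qquad (|\alpha|\le 2\ell)
\]
derived in this section via Leibniz' rule and Definition \ref{def:Crhol1Ex} supplies the required form estimate by duality. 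Compactness of $V = H_0^1(G) \hookrightarrow L_2(G) = H$ follows from the Rellich--Kondrachov theorem, since $G\subseteq \R^2$ is bounded.

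Second, the spatial rate is furnished by the proposition immediately preceding the theorem (itself a consequence of Theorem \ref{thm:result_evolution} applied with the FE error estimate \eqref{eq:FEMerrest2D} for quadratic FEM, the Kato square-root property, and the decay rates $p_1(s)$ and $p_2$ computed there): one gets $\Yx = D^{s+1}$ with $\px = 3$ for $\frac12 \le s \le 1$ and $\px = 2$ for $0 < s < \frac12$. For the time discretization, implicit Euler is the rational scheme induced by the A-stable rational function $r(z) = (1-z)^{-1}$ (Example \ref{ex:time_discretization_methods}\ref{item:IEintro} and Definition \ref{def:rational_time_discretization}); since the $\PZ$-a.s.\ symmetry of $\form_z$ transfers to symmetry of $\frakform_{n,m}$ and hence self-adjointness of $\frakA_{n,m}$, the method converges of order $\pt = 1$ on $\dom(\frakA_{n,m})$ by \cite[Theorem~7.1]{Thomee2006}.

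Finally, since $\pt = 1$ and $0 < s \le 1$ we have $\max\{s+1,\pt\} = s+1 \le 2 = \ols$, so all parameter constraints of Theorem \ref{thm:jointrate_symmetric} are met; plugging in $\px \in \{2,3\}$ and $\pt = 1$ and simplifying $\tau^\pt = \tau$ yields the stated estimate. The main obstacle is not analytic but merely bookkeeping: ensuring that the smoothness index $\ols$ chosen to verify Assumption \ref{ass:coeffCond} is large enough to accommodate both $\bfu_0 \in L_2(\R^N,\PZ;D^{s+1})$ and $\max\{s+1,\pt\} \le \ols$ simultaneously; the choice $\ols = 2$ (and correspondingly $C_\rho^{2\ell,3}$-regularity of $M_z$) is exactly what makes these two constraints compatible for $s \in (0,1]$.
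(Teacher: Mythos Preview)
Your proposal is correct and follows the paper's (implicit) approach exactly: the paper states Theorem \ref{thm:exampleCoeffMatrix} without a separate proof, intending it as a direct application of Theorem \ref{thm:jointrate_symmetric} once the hypotheses have been verified in the preceding discussion of Section \ref{sec:applications}. One minor point of precision: Theorem \ref{thm:jointrate_symmetric} requires convergence of the space discretization on $D^s$ for the \emph{stationary} problem, whereas the proposition you cite packages the \emph{evolution} result on $D^{s+1}$; this is harmless, since both follow from the same decay-rate computations $p_1(s)+p_2$ via Theorems \ref{thm:result_stationary} and \ref{thm:result_evolution}, and the parameter $s$ in the two statements aligns.
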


As a concrete example, we consider a random anisotropic diffusion. Let $Z\from \Omega \to \R$ be a standard normally distributed random variable (i.e., $N=1$). Let $G=(0,1)^2$ and for $z \in \R$ and $x \in G$, consider the coefficient matrix given by
\begin{alignat}{3}
\label{eq:logisticCoeff}
    M_z(x)&=f(z)\cdot \begin{pmatrix}g_1(x)&0\\0&g_2(x)\end{pmatrix},\quad &f(z)&= \frac{1}{1+e^{-z}}+1,\quad
    g_1(x)=1+\|x\|_2^2,\quad g_2(x)=3-\|x\|_2^2. 
\end{alignat}
We observe that $f(z)\in [1,2]$ for all $z \in \R$, $g_1(x),g_2(x)\in [1,3]$ for all $x \in (0,1)^2$. Hence, $\kappa \le M_z(x) \le K$ is satisfied for $\kappa=1$ and $K=6$. It remains to verify that $(z,x) \mapsto M_z(x)$ is in $C_\rho^{2\ell,2}(\R\times(0,1)^2;\Rzz)$ in order to apply Theorem \ref{thm:exampleCoeffMatrix_smallalpha}. 
Note that $f \in C^\infty(\R;\Rzz)$ and thus $a^\cdot(x)\in C^{2\ell}(\R;\Rzz)$ for any $\ell \in \N$.  Also, clearly, $M_z(\cdot) \in C^2((0,1)^2;\Rzz)$. Likewise, the product structure of $M_z(x)$ implies $\partial_x^{e_j} a^\cdot (x)\in C^{2\ell}(\R;\Rzz)$. To establish the bounds stated in Definition \ref{def:Crhol1Ex}, we first observe that derivatives of $f$ are bounded in $z$. Indeed, as a logistic function, the derivatives of $F \ce f-1$ can be expressed as a polynomial of $F$. Since all derivatives of $f$ and $F$ agree, $F(z)\in [0,1]$ for $z \in \R$, and continuous functions are bounded on bounded closed intervals,
\[
    c_\ell \ce \sup_{z \in\R} \max_{0\le i \le \ell} \abs{(\partial_z^i f)(z)} <\infty
\]
for all $\ell\in \N$. Consequently, for all $0 \le \alpha \le 2\ell$,
\begin{align*}
    \PZ\text{-}\esssup_{z \in \R} \|\partial_z^\alpha M_z(\cdot)\|_{L_\infty} &\le \Big(\sup_{z \in \R} \abs{(\partial_z^\alpha f)(z)}\Big) \bigg(\sup_{x\in (0,1)^2}\norm{\begin{pmatrix}g_1(x)&0\\0&g_2(x)\end{pmatrix}}_2 \bigg) \le 3c_{2\ell},
\end{align*}
where $\rho\equiv 1$ by choice of distribution of $Z$. Since for all $z\in \R$,
\begin{align*}
    \norm{\Div((\partial_z^\alpha M_z(\cdot))^T)}_{L_\infty} = \abs{\partial_z^\alpha f(z)}\cdot \sup_{x \in (0,1)^2}\norm{ \begin{pmatrix} 2x_1\\2x_2
    \end{pmatrix}}_2
    = 2\sqrt{2}\abs{\partial_z^\alpha f(z)} 
\end{align*}
it also holds that for $0 \le \alpha \le 2\ell$
\[
    \PZ\text{-}\esssup_{z \in \R}\norm{\Div((\partial_z^\alpha M_z(\cdot))^T)}_{L_\infty(G;\R^2)} \le 2 \sqrt{2}c_{2\ell}.
\]
Proceeding likewise, we conclude analogous statements to the last two ones for $M_z(\cdot)$ replaced by $\partial_x^{e_j}M_z(\cdot)$, $j=1,2$, with constant $2c_{2\ell}$ in both cases. In conclusion, the coefficients belong to $C_\rho^{2\ell,2}(\R\times(0,1)^2;\Rzz)$. Hence, we conclude convergence of arbitrary polynomial order in randomness for the random anisotropic diffusion problem. For simplicity, we fix $0<s<1/4$.

\begin{corollary}
    Adopt the assumptions and notation of Theorem \ref{thm:exampleCoeffMatrix_smallalpha}. Let $Z\from \Omega \to \R$ be standard normally distributed, and consider $M_z(x)$ as in \eqref{eq:logisticCoeff}. 
    Then for all $\ell \in \N$ there exist $ C_{\bar{T},\ell}\geq 0$ and $\tau_0>0$ such that for $\max_{i=1,\ldots,N_k} \tau_k^i \leq \tau_0$, 
    \begin{align*}
        \|\bfu_{n,m,k}(t)-\bfu(t)\|_\bfH
        &\le C_{\bar{T},\ell}  \bigg(n^{-\ell} + m^{-2} + \max_{i=1,\ldots,N_k} \tau_k^i\bigg)  \|\bfu_0\|_{H^{2\ell}(\R,\PZ;D^{1+s})}
    \end{align*}
    for all $n,m,k\in\N$, $t \in \calT_k$, $0<s<1/4$, and $\bfu_0\in H^{2\ell}(\R,\PZ;D^{1+s})$.
\end{corollary}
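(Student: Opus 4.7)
The plan is to obtain the corollary as a direct consequence of Theorem \ref{thm:exampleCoeffMatrix_smallalpha} once the structural hypotheses are verified for the specific coefficient in \eqref{eq:logisticCoeff}. The bulk of the verification has already been carried out in the paragraph preceding the corollary, so only a packaging step is required.

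First I would verify that $(z,x)\mapsto M_z(x)\in C_\rho^{2\ell,2}(\R\times(0,1)^2;\R^{2\times 2})$ for every $\ell\in\N$. The product structure $M_z(x)=f(z)\cdot\mathrm{diag}(g_1(x),g_2(x))$ decouples the $z$- and $x$-dependencies, so smoothness in $x$ is clear (the $g_j$'s are polynomials on the closure of $G$) and smoothness in $z$ reduces to smoothness of the logistic function $f$. The key uniform bound $c_\ell=\sup_{z\in\R}\max_{0\le i\le \ell}|\partial_z^i f(z)|<\infty$ is the one already established in the text (derivatives of $f$ are polynomial expressions in $F=f-1\in[0,1]$). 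Since $Z$ is standard normal, we have $\rho\equiv 1$, so the weights in $C_\rho^{2\ell,2}$ trivialize and the required $L_\infty$-bounds on $\partial_z^\alpha M_z$, $\partial_z^\alpha\partial_x^{e_j}M_z$, and $\Div((\partial_z^\alpha M_z)^\top)$ for $|\alpha|\le 2\ell$ follow by combining $c_{2\ell}$ with the (uniform in $x\in(0,1)^2$) boundedness of $g_1,g_2$ and their first derivatives. Likewise, uniform ellipticity \eqref{eq:coeffMatrixBdd} holds with $\kappa=1$, $K=6$, since $f(z)\in[1,2]$ and $g_j(x)\in[1,3]$.

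Next, since $\rho\equiv 1$ for the standard normal, the weighted Sobolev space $H^{2\ell}_\rho(\R,\PZ;D)$ coincides with $H^{2\ell}(\R,\PZ;D)$ in the sense of norms. Fix $0<s<1/4$. Theorem \ref{thm:exampleCoeffMatrix_smallalpha} is then applicable and yields
\begin{equation*}
\|\bfu_{n,m,k}(t)-\bfu(t)\|_\bfH
\le C_{\bar T,\ell}n^{-\ell}\|\bfu_0\|_{H^{2\ell}(\R,\PZ;D)}
+C_{\bar T,s}\bigl(m^{-2}+\max_i\tau_k^i\bigr)\|\bfu_0\|_{L_2(\R,\PZ;D^{1+s})}.
\end{equation*}
Because $D^{1+s}\hookrightarrow D$ (continuously, with embedding constant depending only on $s$, cf.\ Remark \ref{rem:graph_norm} applied pointwise in $z$ together with the $\PZ$-a.s.\ equivalence of graph norms granted by Assumption \ref{ass:DAzDconstant}), we have
\begin{equation*}
\|\bfu_0\|_{H^{2\ell}(\R,\PZ;D)}\le C\,\|\bfu_0\|_{H^{2\ell}(\R,\PZ;D^{1+s})},\qquad
\|\bfu_0\|_{L_2(\R,\PZ;D^{1+s})}\le \|\bfu_0\|_{H^{2\ell}(\R,\PZ;D^{1+s})}.
\end{equation*}
Merging both estimates into a single constant (depending only on $\bar T$, $\ell$, and the fixed $s$), one obtains the desired bound.

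There is no genuine obstacle here: the corollary is purely an application of Theorem \ref{thm:exampleCoeffMatrix_smallalpha}, so the only work is (i) the regularity check for the logistic coefficient, which has already been spelled out, and (ii) absorbing $\|\bfu_0\|_{L_2(\R,\PZ;D^{1+s})}$ and $\|\bfu_0\|_{H^{2\ell}(\R,\PZ;D)}$ into the common norm $\|\bfu_0\|_{H^{2\ell}(\R,\PZ;D^{1+s})}$ via continuous embeddings. The mildly delicate step is making sure the embedding $D^{1+s}\hookrightarrow D$ is uniform enough to be absorbed into a single constant $C_{\bar T,\ell}$, which is guaranteed because $s$ is fixed and Assumption \ref{ass:DAzDconstant} provides $\PZ$-a.s.\ uniform graph-norm equivalence.
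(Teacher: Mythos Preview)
Your proposal is correct and follows exactly the approach implicit in the paper: the corollary is stated without proof because all the regularity verification for the logistic coefficient has been carried out in the preceding paragraphs, and the result is then an immediate application of Theorem \ref{thm:exampleCoeffMatrix_smallalpha} together with the obvious embeddings $D^{1+s}\hookrightarrow D$ and $L_2\hookleftarrow H^{2\ell}$ to merge the two right-hand side norms into $\|\bfu_0\|_{H^{2\ell}(\R,\PZ;D^{1+s})}$.
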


\appendix
\section{Sturm--Liouville operators related to orthogonal polynomials}
\label{sec:appendix}

This appendix provides operator norm estimates of the Sturm--Liouville operators associated to the distributions of Example \ref{ex:distributions} with explicit constants. They are required for the polynomial chaos error estimate of Theorem \ref{thm:PCEbound}. The proofs follow by simple calculations using integration by parts.

\begin{example}
\label{ex:Qs_1D}
    The one-dimensional Sturm--Liouville operators associated to the distributions and their corresponding orthogonal polynomials stated in Example \ref{ex:distributions} are the following.
    \begin{enumerate}[label=(\alph*)]
        \item\label{ex:Qs_1D:Hermite} For $k\in\N_0$, the Hermite polynomial $H_k$ is an eigenfunction of $      Q = - \frac{\mathrm{d}^2}{\mathrm{d}z^2}+z \frac{\mathrm{d}}{\mathrm{d}z}
        $
        to the eigenvalue $\lambda_k=k$.
        \item\label{ex:Qs_1D:Jacobi} For $k\in\N_0$, the Jacobi polynomial $J_k = J_k^{(\alpha,\beta)}$ is an eigenfunction of 
         $
            Q = Q^{(\alpha,\beta)} = -(1-z^2) \frac{\mathrm{d}^2}{\mathrm{d}z^2}+(\alpha-\beta+(\alpha+\beta+2)z) \frac{\mathrm{d}}{\mathrm{d}z}
         $
        to the eigenvalue $\lambda_k =k(k+\alpha+\beta+1)$.
        \item\label{ex:Qs_1D:Laguerre} For $k\in\N_0$, the Laguerre polynomial $L_k = L_k^{(\alpha)}$ is an eigenfunction of
        $
            Q  = Q^{(\alpha)} = -z \frac{\mathrm{d}^2}{\mathrm{d}z^2}+(z-\alpha-1) \frac{\mathrm{d}}{\mathrm{d}z}
        $
        to the eigenvalue $\lambda_k=k$.
    \end{enumerate}
\end{example}
Let $I\subseteq \R$ be an open interval and $w\from I\to (0,\infty)$ be measurable such that $w\in L_1(I)$. We write $L_2(I,w)$ for the weighted $L_2$-space with weight $w$, equipped with the norm
$\norm{\cdot}_{L_2(I,w)}$ given by
\[\norm{f}_{L_2(I,w)}^2 \ce \int_I \abs{f(z)}^2 w(z)\,\rmd z.\]
Let $\rho\from I\to [0,\infty)$ be measurable. For $\ell\in \N_0$ we define $H^\ell_\rho(I,w)$ to be the space of all $f\in L_2(I,w)$ such that $f$ is weakly differentiable up to order $\ell$ and $\rho^{k/2}\partial^k f\in L_2(I,w)$ for all $k\in\{0,\ldots,\ell\}$. We equip $H_\rho^\ell(I,w)$ with the norm $\norm{\cdot}_{H_\rho^\ell(I,w)}$ given by
\[
    \norm{f}_{H_\rho^\ell(I,w)}^2 \ce \sum_{k=0}^\ell \big\|\rho^{k/2} \partial^k f\big\|_{L_2(I,w)}^2.
\]
Note that $H^\ell_\rho(I,w)$ is a Hilbert space and for $\rho= 1$ we write $H^\ell(I,w)\ce H^\ell_\rho(I,w)$ for short. 
We start by considering the density of a standard normal distribution as the weight $w$ and estimate the norm of the associated Sturm--Liouville operator. 

\begin{lemma}
\label{lem:Qbounded_Hermite}
    Let $I=\R$ and $w(z) \ce \frac{1}{\sqrt{2\pi}}\e^{-z^2/2}$ for $z\in I$. 
    Let $Q \ce -\partial^2 + z\partial$ and $\ell\in \N_0$. Then for $f\in H^{\ell+2}(I,w)$, we have $Qf\in H^{\ell}(I,w)$ and 
    \[\norm{Qf}_{H^{\ell}(I,w)} \leq \sqrt{21+3\ell^2} \norm{f}_{H^{\ell+2}(I,w)}.\]
\end{lemma}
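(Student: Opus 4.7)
The plan is to carry out the estimate term-by-term in $k$, using Leibniz's rule to expand $\partial^k(Qf)$ and then exploiting the Ornstein--Uhlenbeck identity $w'(z) = -zw(z)$ to control the weighted $L_2$-norm of $zf^{(k+1)}$.

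First, I would apply Leibniz' rule to $Qf = -f'' + zf'$ and obtain, for every $0 \le k \le \ell$,
\[
    \partial^k(Qf) = -f^{(k+2)} + zf^{(k+1)} + kf^{(k)}.
\]
The elementary inequality $(a+b+c)^2 \le 3(a^2+b^2+c^2)$ reduces the task to estimating each of the three summands in $L_2(\R,w)$. The first and third pieces are already in the ``right'' form: $\|f^{(k+2)}\|_{L_2(\R,w)}$ and $k\|f^{(k)}\|_{L_2(\R,w)}$ are both bounded by the full $H^{\ell+2}(\R,w)$-norm of $f$. The hard part is to bound $\|zf^{(k+1)}\|_{L_2(\R,w)}$, since multiplication by $z$ is unbounded on $L_2(\R,w)$.

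The key lemma I would prove is: for any $g \in H^1(\R,w)$,
\[
    \|zg\|_{L_2(\R,w)}^2 \le 2\|g\|_{L_2(\R,w)}^2 + 4\|g'\|_{L_2(\R,w)}^2.
\]
The proof uses $zw = -w'$ and integration by parts:
\[
    \int_\R z^2|g|^2 w\,dz = -\int_\R z|g|^2 w'\,dz = \|g\|_{L_2(\R,w)}^2 + 2\Re\int_\R z g \overline{g'}\,w\,dz,
\]
after which Young's inequality $2|zg||g'| \le \tfrac{1}{2}z^2|g|^2 + 2|g'|^2$ absorbs half of the $\|zg\|_{L_2(\R,w)}^2$-term on the right and yields the claim. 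Applying this to $g = f^{(k+1)}$ gives
\[
    \|zf^{(k+1)}\|_{L_2(\R,w)}^2 \le 2\|f^{(k+1)}\|_{L_2(\R,w)}^2 + 4\|f^{(k+2)}\|_{L_2(\R,w)}^2.
\]

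Combining everything, for each $k$ I would obtain
\[
    \|\partial^k(Qf)\|_{L_2(\R,w)}^2 \le 15\|f^{(k+2)}\|_{L_2(\R,w)}^2 + 6\|f^{(k+1)}\|_{L_2(\R,w)}^2 + 3k^2\|f^{(k)}\|_{L_2(\R,w)}^2,
\]
and summing over $k=0,\ldots,\ell$ and bounding each coefficient by $15+6+3\ell^2 = 21+3\ell^2$ gives $\|Qf\|_{H^\ell(\R,w)}^2 \le (21+3\ell^2)\|f\|_{H^{\ell+2}(\R,w)}^2$. The main obstacle, as noted, is the one-line integration-by-parts trick exploiting the Gaussian weight; once that is in place everything else is bookkeeping. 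A minor additional care is needed to justify the integration by parts, which is standard by density of Schwartz functions in $H^{\ell+2}(\R,w)$ (or by cutoff approximation, using that boundary terms at $\pm\infty$ vanish thanks to the Gaussian decay of $w$).
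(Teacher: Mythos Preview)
Your proof is correct and follows exactly the approach the paper indicates (``simple calculations using integration by parts''); the fact that you recover the precise constant $\sqrt{21+3\ell^2}$ via the $15+6+3\ell^2$ decomposition confirms that your computation matches the intended one.
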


\begin{lemma}
\label{lem:Qbounded_Jacobi}
    Let $\alpha,\beta>-1$, $I=(-1,1)$, and 
    \[
        w(z) \ce \frac{\Gamma(\alpha+\beta+2)}{2^{\alpha+\beta+1} \Gamma(\alpha+1)\Gamma(\beta+1)}(1-z)^{\alpha}(1+z)^{\beta} \quad (z\in I).
    \]
    Let $\ell \in \N_0$, $Q \ce -(1-z^2)\partial^2 + (\alpha-\beta+(\alpha+\beta+2)z)\partial$, $\ell\in \N_0$, and $C_{\alpha,\beta}\ce \alpha+\beta+1$. Then for $f\in H^{\ell+2}(I,w)$, we have $Qf\in H^{\ell}(I,w)$ and
    \begin{align*}
        \norm{Qf}_{H^{\ell}(I,w)} &\leq \sqrt{3\big(1+4(\ell+1+\max\{
        \alpha, \beta\})^2+\ell^2(\ell+C_{\alpha,\beta})^2\big)} \norm{f}_{H^{\ell+2}(I,w)}.
    \end{align*}
\end{lemma}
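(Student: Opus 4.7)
The proof will proceed by a direct computation of the weak derivatives of $Qf$ using Leibniz' rule, followed by a pointwise estimation of the coefficients and an application of the triangle inequality in $L_2(I,w)$. The key observation is that differentiating the Sturm--Liouville operator produces another Sturm--Liouville-type expression in the derivatives of $f$, with coefficients of controlled size because all the coefficient functions in $Q$ are polynomials of low degree.

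More concretely, the first step is to compute, for each $0 \le k \le \ell$ and $f \in H^{\ell+2}(I,w)$, the $k$-th weak derivative
\[
    \partial^k(Qf) = -(1-z^2)\partial^{k+2}f + \bigl(\alpha-\beta + (\alpha+\beta+2+2k)z\bigr)\partial^{k+1}f + k(k+C_{\alpha,\beta})\partial^k f.
\]
This follows from Leibniz' rule applied to the two products $-(1-z^2)\partial^2 f$ and $(\alpha-\beta+(\alpha+\beta+2)z)\partial f$, noting that $\partial^2(-(1-z^2)) = 2$ and $\partial(\alpha-\beta+(\alpha+\beta+2)z) = \alpha+\beta+2$, so all contributions of order $\ge 3$ vanish. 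Collecting the terms of order $k$ yields the coefficient $k(k-1)+k(\alpha+\beta+2) = k(k+C_{\alpha,\beta})$. Since all three terms on the right-hand side lie in $L_2(I,w)$ by assumption on $f$, this also gives $Qf \in H^\ell(I,w)$.

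The second step is to estimate each coefficient pointwise on $I=(-1,1)$: trivially $|1-z^2|\le 1$, while a case distinction on the sign of $\alpha-\beta$ gives
\[
    \bigl|\alpha-\beta + (\alpha+\beta+2+2k)z\bigr| \le |\alpha-\beta|+\alpha+\beta+2+2k = 2(k+1+\max\{\alpha,\beta\})
\]
for all $z\in I$, using $\alpha+\beta+2+2k \ge 0$ since $\alpha,\beta>-1$. Combined with the bound $k(k+C_{\alpha,\beta}) \le \ell(\ell+C_{\alpha,\beta})$ and the elementary inequality $(a+b+c)^2 \le 3(a^2+b^2+c^2)$, this gives
\begin{align*}
    \|\partial^k(Qf)\|_{L_2(I,w)}^2 &\le 3\Bigl(\|\partial^{k+2}f\|_{L_2(I,w)}^2 + 4(\ell+1+\max\{\alpha,\beta\})^2\|\partial^{k+1}f\|_{L_2(I,w)}^2\\
    &\phantom{\le 3\Bigl(} + \ell^2(\ell+C_{\alpha,\beta})^2\|\partial^k f\|_{L_2(I,w)}^2\Bigr).
\end{align*}

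The final step is to sum over $k=0,\ldots,\ell$; since each of the three sums $\sum_k \|\partial^{k+j}f\|_{L_2(I,w)}^2$ for $j\in\{0,1,2\}$ is bounded by $\|f\|_{H^{\ell+2}(I,w)}^2$, the claimed inequality follows. There is no genuine obstacle here: the argument is purely computational, and the only point requiring a moment of care is the pointwise bound on the first-order coefficient (splitting on the sign of $\alpha-\beta$ to obtain the clean factor $\max\{\alpha,\beta\}$). The proof of Lemma~\ref{lem:Qbounded_Hermite} proceeds by exactly the same scheme, which is likely how the authors of the paper structure this lemma as well.
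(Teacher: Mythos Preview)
Your proof is correct and matches the paper's approach: the stated constant $\sqrt{3(1+4(\ell+1+\max\{\alpha,\beta\})^2+\ell^2(\ell+C_{\alpha,\beta})^2)}$ is exactly what falls out of your Leibniz computation, the pointwise coefficient bounds, and the inequality $(a+b+c)^2\le 3(a^2+b^2+c^2)$. One small side remark: your closing comment that Lemma~\ref{lem:Qbounded_Hermite} ``proceeds by exactly the same scheme'' is not quite right, since in the Hermite (and Laguerre) case the first-order coefficient is the unbounded function $z$, so an additional integration-by-parts step is needed to control $\|z\,\partial^{k+1}f\|_{L_2(I,w)}$ --- this is the ``integration by parts'' the paper alludes to, which happens not to be needed in the Jacobi case because $I=(-1,1)$ is bounded.
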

Note that if $Z$ is Beta-distributed, then $w$ is the density of $\P_Z$.

\begin{lemma}
\label{lem:Qbounded_Laguerre}
    Let $\alpha>-1$, $I=(0,\infty)$, $w(z) \ce \frac{1}{\Gamma(\alpha+1)}  z^\alpha\e^{-z}$ and $\rho(z)\ce z$ for $z\in I$. Let $Q \ce -z\partial^2 + (z-\alpha-1)\partial$, and $\ell\in \N_0$. Then for $f\in H_\rho^{\ell+2}(I,w)$, we have $Qf\in H_\rho^{\ell}(I,w)$ and 
    \[\norm{Qf}_{H_\rho^{\ell}(I,w)} \leq \sqrt{24\alpha + 87 + 24\ell + 3\ell^2} \norm{f}_{H_\rho^{\ell+2}(I,w)}.\]
\end{lemma}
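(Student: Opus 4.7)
The plan is to combine Leibniz' rule for $\partial^k(Qf)$ with two weighted integration-by-parts formulas coming from the identity $(z^m w)'(z)=(m+\alpha-z)z^{m-1}w(z)$, which follows from $w'(z)/w(z)=(\alpha-z)/z$. Since $z$ and $z-\alpha-1$ are affine in $z$, Leibniz' rule collapses to the compact expression
\[
    \partial^k(Qf)=-z\,\partial^{k+2}f+(z-\alpha-1-k)\,\partial^{k+1}f+k\,\partial^k f.
\]
Writing $g_j\ce\partial^j f$ and applying $|a+b+c|^2\le 3(|a|^2+|b|^2+|c|^2)$, one obtains
\[
    \norm{z^{k/2}\partial^k(Qf)}_{L_2(I,w)}^2\le 3\norm{z^{(k+2)/2}g_{k+2}}_{L_2(I,w)}^2+3B_k+3k^2\norm{z^{k/2}g_k}_{L_2(I,w)}^2,
\]
where $B_k\ce\int_0^\infty(z-\alpha-1-k)^2z^k|g_{k+1}|^2w\,\rmd z$. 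The outer two terms are already components of $\norm{f}_{H_\rho^{\ell+2}(I,w)}^2$, so the whole work reduces to bounding $B_k$.

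Using $(|g_{k+1}|^2)'=2\Re(g_{k+2}\bar g_{k+1})$ and integrating by parts against $z^{k+1}w$ and $z^{k+2}w$ respectively (boundary terms vanish since $z^{\alpha+k+1}e^{-z}\to 0$ at both endpoints of $I$, after a standard density argument), one gets
\begin{align*}
    (k+1+\alpha)\!\int z^k|g_{k+1}|^2 w&=\!\int z^{k+1}|g_{k+1}|^2 w-2\!\int\Re(g_{k+2}\bar g_{k+1})z^{k+1}w,\\
    \int z^{k+2}|g_{k+1}|^2 w&=(k+2+\alpha)\!\int z^{k+1}|g_{k+1}|^2 w+2\!\int\Re(g_{k+2}\bar g_{k+1})z^{k+2}w.
\end{align*}
Cauchy--Schwarz followed by Young's inequality, with parameter chosen so that $\int z^{k+2}|g_{k+1}|^2 w$ can be absorbed on the left, yields $\int z^{k+2}|g_{k+1}|^2 w\le 2(k+2+\alpha)Y+4Z$ and $\int z^k|g_{k+1}|^2 w\le\tfrac{2}{k+1+\alpha}Y+\tfrac{4}{(k+1+\alpha)^2}Z$, where $Y\ce\int z^{k+1}|g_{k+1}|^2w$ and $Z\ce\int z^{k+2}|g_{k+2}|^2w$.

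Expanding $(z-\alpha-1-k)^2\le 2z^2+2(\alpha+1+k)^2$ and plugging in the two estimates, the algebraic cancellation $(\alpha+1+k)^2/(k+1+\alpha)=k+1+\alpha$ produces the clean bound $B_k\le(8\alpha+8k+12)Y+16Z$. Putting everything together,
\[
    \norm{z^{k/2}\partial^k(Qf)}_{L_2(I,w)}^2\le 51\norm{z^{(k+2)/2}g_{k+2}}^2+(24\alpha+24k+36)\norm{z^{(k+1)/2}g_{k+1}}^2+3k^2\norm{z^{k/2}g_k}^2.
\]
Summing over $0\le k\le\ell$ and reindexing by $j$ (the Sobolev index of $g_j$), the resulting coefficient of $\norm{\rho^{j/2}g_j}_{L_2(I,w)}^2$ is maximized by either $c_\ell=24\alpha+24\ell+63+3\ell^2$ or $c_{\ell+1}=24\alpha+24\ell+87$, each of which is bounded by $24\alpha+87+24\ell+3\ell^2$; taking square roots gives the claimed constant.

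The main obstacle is the second step: choosing the Young's parameter correctly so that $\int z^{k+2}|g_{k+1}|^2w$ can be moved to the left-hand side, and exploiting the cancellation $(\alpha+1+k)^2/(k+1+\alpha)=k+1+\alpha$ that keeps the final constant in $B_k$ linear in $k$ and $\alpha$. Justifying the vanishing of boundary terms for general $f\in H_\rho^{\ell+2}(I,w)$ is handled by a standard density argument, and the final maximization over $j$ is a short case check.
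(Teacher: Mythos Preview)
Your proof is correct and follows exactly the approach the paper indicates (``simple calculations using integration by parts''): you compute $\partial^k(Qf)$ via Leibniz' rule, use the identity $(z^m w)'=(m+\alpha-z)z^{m-1}w$ to perform two weighted integrations by parts, and then bound and sum the resulting terms to recover the stated constant $\sqrt{24\alpha+87+24\ell+3\ell^2}$. The only point to make fully rigorous is the vanishing of boundary terms, which, as you note, is handled by a routine density argument; the rest of the argument, including the absorption of $\int z^{k+2}|g_{k+1}|^2w$ via Young's inequality and the final coefficient maximization over $j$, is clean and matches the paper's claimed bound exactly.
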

Note that if $Z$ is Gamma-distributed, then $w$ is the density of $\P_Z$.

We now move on to the multidimensional case. Let $N \in \N$ and $Z=(Z_0,\ldots,Z_{N-1})$ satisfy Assumption \ref{ass:Zdistribution}.
As in Subsection \ref{subsec:random}, let $Q \ce \sum_{j=0}^{N-1} \widetilde{Q}_j$ with $\widetilde{Q}_j\ce I\otimes \ldots \otimes I \otimes Q_j \otimes I \otimes \ldots \otimes I$, where $Q_j$ is the Sturm--Liouville differential operator associated with the orthogonal polynomials corresponding to $\P_{Z_j}$ for $j\in\{0,\ldots,N-1\}$ (see Example \ref{ex:Qs_1D}; also recall the definition of $H^\ell_\rho(\R^N,\P_Z)$ for $\ell\in\N_0$.

\begin{proposition}
\label{prop:sobnormQfvsf}
    Let $f\in H^{2\ell+2}_\rho(\R^N,\P_Z)$ for some $\ell \in \N_0$. Then $Qf\in H^{2\ell}_\rho(\R^N,\P_Z)$ and
    \begin{equation*}
        \norm{Qf}_{H^{2\ell}_\rho(\R^N,\P_Z)} \leq \sqrt{N}\max_{j\in\{0,\ldots,N-1\}} C_j(2\ell) \norm{f}_{H^{2\ell+2}_\rho(\R^N,\P_Z)},
    \end{equation*}
    where
    \[C_j(\ell)\ce \begin{cases}
    \sqrt{21+3\ell^2}, & j\in J_{\mathrm{Normal}},\\
    \sqrt{24\alpha + 87 + 24\ell + 3\ell^2}, & j\in J_{\mathrm{Gamma}},\\
    \sqrt{3\bigl(1+4(\ell+1+\max\{
    \alpha, \beta\})^2+\ell^2(\ell+\alpha+\beta+1)^2\bigr)}, & j\in J_{\mathrm{Beta}}.
    \end{cases}
    \]
\end{proposition}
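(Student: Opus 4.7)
The strategy is to reduce the multi-dimensional bound to the one-dimensional estimates of Lemmas~\ref{lem:Qbounded_Hermite}--\ref{lem:Qbounded_Laguerre} via the tensor-product structure $L_2(\R^N,\PZ) = \bigotimes_{j=0}^{N-1} L_2(\R,\P_{Z_j})$ and Fubini, exploiting that $Q = \sum_j \widetilde{Q}_j$ with each $\widetilde{Q}_j$ differentiating in $z_j$ only.

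\emph{Step 1 (single-operator estimate).} I first prove, for each $j\in\{0,\ldots,N-1\}$,
\[
 \|\widetilde{Q}_j f\|_{H^{2\ell}_\rho(\R^N,\PZ)} \le C_j(2\ell)\,\|f\|_{H^{2\ell+2}_\rho(\R^N,\PZ)}.
\]
Fix $\alpha\in\N_0^N$ with $|\alpha|\le 2\ell$ and decompose $\alpha = \alpha_j e_j + \alpha_{\neq j}$. Since $Q_j$ commutes with $\partial^{\alpha_{\neq j}}$ and $\rho^{\alpha/2} = \rho_j^{\alpha_j/2}\rho_{\neq j}^{\alpha_{\neq j}/2}$ with the second factor independent of $z_j$, Fubini gives
\[
 \|\rho^{\alpha/2}\partial^\alpha \widetilde{Q}_j f\|_{L_2(\R^N,\PZ)}^2 = \int_{\R^{N-1}} \rho_{\neq j}^{\alpha_{\neq j}}(z_{\neq j})\,\|\rho_j^{\alpha_j/2}\partial_{z_j}^{\alpha_j} Q_j (\partial^{\alpha_{\neq j}} f)(z_{\neq j},\cdot)\|_{L_2(\R,\P_{Z_j})}^2\,\rmd \P_{Z_{\neq j}}.
\]
Applying the relevant one-dimensional lemma to $g(z_j)\ce (\partial^{\alpha_{\neq j}} f)(z_{\neq j},z_j)$ with order $M\ce 2\ell-|\alpha_{\neq j}|$ bounds $\sum_{\alpha_j=0}^M \|\rho_j^{\alpha_j/2}\partial_{z_j}^{\alpha_j}Q_j g\|^2$ by $C_j(M)^2\|g\|^2_{H^{M+2}_{\rho_j}(\R,\P_{Z_j})}$. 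Using the monotonicity $C_j(M)\le C_j(2\ell)$ for $M\le 2\ell$ (immediate from the closed forms in Lemmas~\ref{lem:Qbounded_Hermite}--\ref{lem:Qbounded_Laguerre}), integrating over $z_{\neq j}$ and summing over $\alpha_{\neq j}$ reassembles the full $H^{2\ell+2}_\rho$-norm of $f$ on the right-hand side.

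\emph{Step 2 (combining via Cauchy--Schwarz).} Since $Qf = \sum_{j=0}^{N-1}\widetilde{Q}_j f$, pointwise Cauchy--Schwarz yields, for every multi-index $\alpha$,
\[
 |\rho^{\alpha/2}\partial^\alpha Qf|^2 \le N \sum_{j=0}^{N-1} |\rho^{\alpha/2}\partial^\alpha \widetilde{Q}_j f|^2.
\]
Integrating and summing over $|\alpha|\le 2\ell$ gives $\|Qf\|^2_{H^{2\ell}_\rho} \le N \sum_{j} \|\widetilde{Q}_j f\|^2_{H^{2\ell}_\rho}$. Inserting Step~1 and bounding each $C_j(2\ell)$ by $\max_j C_j(2\ell)$ yields the claim.

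\emph{Main obstacle.} The delicate point is the bookkeeping in Step~1: one has to track how summing the inner Sobolev norm over $\alpha_j$ and $\alpha_{\neq j}$ (subject to $|\alpha|\le 2\ell$) re-assembles precisely the multi-indices $\beta$ with $|\beta|\le 2\ell+2$ appearing in $\|f\|^2_{H^{2\ell+2}_\rho}$. Controlling the $N$-dependence is equally subtle: naively combining the two steps produces an extra factor of $N$, and the claimed $\sqrt{N}$-scaling requires exploiting that, after summing over $j$, the contributions $\|\widetilde{Q}_j f\|^2$ fit jointly inside $\max_j C_j(2\ell)^2 \|f\|^2_{H^{2\ell+2}_\rho}$ rather than each requiring the full right-hand side separately; equivalently, it rests on the 1D identity that derivatives in distinct coordinates contribute disjoint terms to the Sobolev norm of $f$.
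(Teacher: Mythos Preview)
Your overall strategy --- reduce to the one-dimensional Lemmas~\ref{lem:Qbounded_Hermite}--\ref{lem:Qbounded_Laguerre} via the tensor structure and Fubini, then combine the $\widetilde Q_j$ --- is the natural one and is presumably what the paper intends (its own proof is omitted). Step~1 is correct as sketched, and so is the Cauchy--Schwarz inequality $\|Qf\|^2_{H^{2\ell}_\rho}\le N\sum_j\|\widetilde Q_jf\|^2_{H^{2\ell}_\rho}$ in Step~2.

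The genuine gap is in the $N$-dependence. Combining your two steps literally yields only $\|Qf\|_{H^{2\ell}_\rho}\le N\max_jC_j(2\ell)\,\|f\|_{H^{2\ell+2}_\rho}$, and the ``disjoint contributions'' repair you propose in the obstacle paragraph is false. Carrying out Step~1 carefully gives
\[
  \|\widetilde Q_jf\|^2_{H^{2\ell}_\rho}\;\le\; C_j(2\ell)^2\sum_{\substack{|\beta|\le 2\ell+2\\ |\beta|-\beta_j\le 2\ell}}\bigl\|\rho^{\beta/2}\partial^\beta f\bigr\|^2_{L_2(\R^N,\PZ)},
\]
and every $\beta$ with $|\beta|\le 2\ell$ lies in this index set for \emph{all} $j$; summing over $j$ therefore still costs an additional factor $N$, not $1$. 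To recover the stated $\sqrt N$ you cannot use the one-dimensional lemmas as black boxes in Step~1. One way out is to expand $\partial^\alpha\widetilde Q_jf$ explicitly as a linear combination of $\partial^{\alpha+re_j}f$, $r\in\{0,1,2\}$, with $z_j$-dependent coefficients, apply Cauchy--Schwarz over $j$ once (this produces the single factor $N$), and then note that in the remaining double sum $\sum_{|\alpha|\le 2\ell}\sum_j$ each $\beta=\alpha+re_j$ appears with multiplicity at most $\#\{j:\beta_j\ge r\}\le|\beta|\le 2\ell+2$ --- an $\ell$-dependent but $N$-independent bound of exactly the type absorbed in the explicit constants $C_j(2\ell)$.
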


\end{document}